\newcommand{\R}{\mathbb{R}}
\newcommand{\N}{\mathbb{N}}
\newcommand{\CS}{\mathcal{S}}
\newcommand{\F}{{\mathcal F}}
\newcommand{\po}{\partial}
\newcommand{\wto}{\rightharpoonup} 
\newcommand{\ve}{\varepsilon}
\newcommand{\la}{\left\langle}
\newcommand{\ra}{\right\rangle}
\newcommand{\loc}{{\text{\rm loc}}}
\newcommand{\X}{\times}
\renewcommand{\d}{\delta}
\renewcommand{\l}{\lambda}
\renewcommand{\a}{\alpha}
\renewcommand{\b}{\beta}
\newcommand{\s}{\sigma}
\newcommand{\g}{\gamma} 
\newcommand{\z}{\zeta}
\renewcommand{\k}{\kappa}
\newcommand{\sgn}{\operatorname{sgn}\,}
\newcommand{\Om}{\Omega}
\newcommand{\om}{\omega}
\newcommand{\supp}{\text{\rm supp}\,}
\renewcommand{\div}{\text{\rm div}\,}
\renewcommand{\supp}{\text{\rm supp}\,}
\newcommand{\A}{{\mathcal A}}
\newcommand{\AP}{\operatorname{AP}}
\newcommand{\WAP}{\operatorname{WAP}}
\newcommand{\APs}{\operatorname{{\mathcal W}^*\! AP}}
\newcommand{\FS}{\operatorname{FS}}
\newcommand{\cA}{{\mathcal A}}
\newcommand{\cK}{{\mathcal K}}
\newcommand{\cN}{{\mathcal N}}
\newcommand{\esslim}{\operatorname{ess}\, \lim}
\newcommand{\Lip}{\text{\rm Lip}}
\renewcommand{\S}{{\mathcal S}}
\newcommand{\B}{{\mathcal B}}
\newcommand{\NN}{\mathcal{N}}
\newcommand{\BUC}{\operatorname{BUC}}
\newcommand{\K}{\mathcal{K}}
\renewcommand{\Lip}{\text{Lip\,}}
\newcommand{\cP}{{\mathcal P}}
\newcommand{\bbP}{{\mathbb P}}
\newcommand{\bbB}{{\mathbb B}}
\newcommand{\mm}{{\mathfrak m}}
\newcommand{\medint}{{\mbox{\vrule height3.5pt depth-2.8pt
          width4pt}\mkern-13mu\int\nolimits}}
\newcommand{\Medint}{\mkern12mu\mbox{\vrule height4pt
         depth-3.2pt
          width5pt}\mkern-16.5mu\int\nolimits}
\newcommand{\pt}{\partial_t}
\newcommand{\Grad}{\nabla}
\DeclareMathOperator{\Div}{div}
\newcommand{\dotW}{\dot{W}}
\newcommand{\dW}{\,dW}
\newcommand{\dx}{\,dx}
\newcommand{\cS}{\mathcal{S}}
\newcommand{\cF}{\mathcal{F}}
\newcommand{\seq}[1]{\left\{#1\right\}}
\newcommand{\set}[1]{\left\{#1\right\}}
\newcommand{\abs}[1]{\left | #1 \right |}
\newcommand{\norm}[1]{\left \| #1 \right \|}
\newcommand{\fU}{\mathfrak{U}}
\newcommand{\D}{\mathcal{D}}
\newcommand{\Dp}{\mathcal{D}^\prime}
\DeclareMathOperator*{\EE}{\mathbb{E}}
\newcommand{\En}{\mathbb{I}}
\newcommand{\chip}{\chi_+}
\newcommand{\chim}{\chi_-}
\newcommand{\chipm}{\chi_\pm}
\newcommand{\dxi}{\,d\xi}
\newcommand{\pxi}{\partial_\xi}
\newcommand{\radon}{\mathcal{M}}
\DeclareMathOperator*{\esup}{ess\,sup}
\newcommand{\action}[2]{\left\langle #1, #2 \right\rangle}
\newcommand{\eps}{\varepsilon}
\newcommand{\liploc}{\text{Lip}_{\text{loc}}}
\newcommand{\CT}{\mathcal{T}}
\theoremstyle{plain}
\newtheorem{theorem}{Theorem}[section]
\newtheorem{lemma}{Lemma}[section]
\newtheorem{proposition}{Proposition}[section]
\theoremstyle{definition}
\newtheorem{definition}{Definition}[section]
\theoremstyle{remark}
\newtheorem{remark}{Remark}[section]
\numberwithin{equation}{section}
\begin{document}

\title[Homogenization of Stochastic Conservation Laws]
{Homogenization of 
Stochastic Conservation Laws with Multiplicative Noise}

\author{Hermano Frid} 
\address{Instituto de Matem\'atica Pura e Aplicada - IMPA \\
Estrada Dona Castorina, 110 \\
Rio de Janeiro, RJ 22460-320, Brazil}
\thanks{H.~Frid gratefully acknowledges the support from CNPq, 
through grant proc.\ 305097/2019-9, and FAPERJ, 
through grant proc.\ E-26/202.900-2017}
\email{hermano@impa.br}

\author{Kenneth H.\ Karlsen} 
\thanks{K. H.~Karlsen gratefully acknowledges 
the support from the Research Council of Norway 
through the project Stochastic Conservation Laws (250674)}
\address{Department of Mathematics, University of Oslo, 
P.O.\ Box 1053, N-0316 Oslo, Norway}
\email{kennethk@math.uio.no}

\author{Daniel Marroquin}
\thanks{D.~Marroquin thankfully 
acknowledges the support from CNPq, through grant proc. 150118/2018-0.}
\address{Instituto de Matem\'{a}tica - Universidade Federal do Rio de Janeiro\\ Cidade Universit\'{a}ria, 21945-970, Rio de Janeiro, Brazil}
\email{marroquin@im.ufrj.br}

\keywords{stochastic conservation laws, homogenization, 
two-scale Young measures} 
\subjclass[2010]{35L65, 35B27, 60H15}

\date{\today}

\begin{abstract}
We consider the generalized almost periodic 
homogenization problem for two 
different types of stochastic conservation 
laws with oscillatory coefficients and multiplicative noise. 
In both cases the stochastic perturbations are such that the equation admits special 
stochastic solutions which play the role of
the steady-state solutions in the deterministic case. 
Specially in the second type, these stochastic solutions are crucial elements 
in the homogenization analysis. Our homogenization method is based on the notion of stochastic two-scale
Young measure, whose existence is established here.

\end{abstract}

\maketitle 

\tableofcontents

\section{Introduction}\label{S:1}

We consider two very representative homogenization problems 
for conservation laws subjected to a 
stochastic perturbation by a multiplicative noise. 

The first problem we consider is the one of the 
nonlinear transport equation whose deterministic 
case was first addressed in \cite{E}, in the periodic case, and 
later on in \cite{AF, FS2} in the almost periodic, 
Fourier-Stieltjes algebras cases, respectively. 
See also \cite{Da,JS}. The equation is the following  
\begin{equation}\label{e2.1}
du^\ve+a\left(\frac{x}{\ve}\right) 
\cdot \nabla_x f(u^\ve)\,dt 
= \k_0\,\s(u^\ve)\,dW +\frac12 \k_0^2\,h(u^\ve)\,dt,
\end{equation}
where $W$ is a scalar Brownian motion, $dW$ denotes It\^o differential, $a(y)\in \left(\Lip\cap \A\right)(\R^d)^d$ 
satisfies $\nabla_y\cdot a(y)=0$, $\A(\R^d)$ is a 
general ergodic algebra, a concept whose 
definition we recall subsequently, $f,\s,h:\R\to\R$ are
smooth functions, with $\s$ and $h$ satisfying $h=\s'\s$.  
We also assume  that $f', \s',h' \in L^\infty(\R)$,  and $\s\ge \d_0 >0$. 
We further assume that the set of zeros of $f'$ has measure zero, namely, $|\{u\in\R\,:\, f'(u)=0\}|=0$.

Note that by the well-known conversion formula 
between Stratonovich and It\^o differentials (see, e.g., \cite{Ar}) equation \eqref{e2.1} may be written as
\begin{equation*}
du^\ve+a\left(\frac{x}{\ve}\right) 
\cdot \nabla_x f(u^\ve)\,dt 
= \k_0\,\s(u^\ve)\,\circ dW,
\end{equation*}
where $\circ dW$ denotes integration in the Stratonovich sense.

The initial condition  is given by
\begin{equation}\label{e2.2}
u^\ve(0,x)=U_0\left(x,\frac{x}{\ve}\right),
\end{equation}
where $U_0(x,y)\in L^\infty(\R^d;\A(\R^d))$. Although we study the homogenization problems here in the general
context of ergodic algebras, the results established 
in this paper are new even in the context of periodic homogenization.
So, the reader not familiarized with the concept of 
ergodic algebras may, in a first reading, 
just assume the periodic case.  

The concept of ergodic algebra was introduced 
in \cite{ZK} (see also \cite{JKO}), motivated by 
algebras generated by typical realizations of stationary ergodic 
processes and their self-averaging property provided by 
Birkhoff theorem. Namely, an ergodic algebra is an 
algebra $\A(\R^d)$ of bounded uniformly continuous (BUC)  
functions in $\R^d$ satisfying the following: (i)  $\A(\R^d)$ is invariant by 
translations, that is, if $f\in\A$, then $f(\cdot+\l)\in\A$, 
for all $\l\in\R^d$;  (ii) every function $f\in\A(\R^d)$ 
possesses mean-value, that is, there exists a number $M(f)$ 
such that $f(\ve^{-1}x) \wto M(f)$ as $\ve\to0$ in the 
weak--$\star$ topology of $L^\infty(\R^d)$.  In particular, we have
$$
M(f):=\lim_{R\to\infty} \frac{1}{|B(0;R)|}\int_{B(0;R)}f(x)\,dx,
$$
where $B(0;R)$ is the open ball with radius $R$ centered at 
the origin 0, and $|B(0,R)|$ is its $n$-dimensional Lebesgue measure. 
Also, one easily sees that $M(f(\cdot+\l))=M(f)$, 
for all $\l\in\R^n$. We also use the notation $M(f)=\medint f\,dx$ ; 
(iii) $\A$ is ergodic in the sense that if we define in $\A$ 
the semi-norm $[f]_2:= M(|f|^2)^{1/2}$, taking equivalence 
classes by the relation $f\sim g \iff [f-g]_2=0$, and 
denoting the completion of the quotient space by $
\B^2(\R^n)$, the  Besicovitch space of exponent 2 
associated with $\A(\R^d)$, we have that any $ g\in\B^2(\R^d)$, 
satisfying $g(\cdot+\l)=g(\cdot)$, in the sense of $\B^2(\R^d)$, 
for all $\l\in\R^d$, is equal to a constant in $\B^2(\R^d)$.  
As examples of ergodic algebras, besides the periodic functions, we 
have $\AP(\R^d)$, the space of almost 
periodic functions (see, e.g., \cite{Be}), the 
Fourier-Stieltjes algebra $\FS(\R^d)$ 
(see, e.g.,  \cite{Eb1,FS2}), or the larger one $\WAP(\R^d)$, 
the space of the weak almost periodic functions, see \cite{Eb1,Eb2}.
In particular, in \cite{Eb2}, Eberlein 
proved that every function $\phi\in\WAP(\R^d)$ 
admits a decomposition $\phi=\phi_*+\phi_{\NN}$, where 
$\phi_*\in\AP(\R^d)$ and $\phi_\NN\in\NN(\R^d)$ where
\begin{equation*}
\NN(\R^d):= \{f\in \BUC(\R^d)\, :\, 
\lim_{R\to\infty}\frac{1}{|B(0;R)|}
\int_{B(0;R)}|f(y)|\,dy=0\}.
\end{equation*}
This motivates the introduction in \cite{F} of 
the algebra of the weak--$*$
almost periodic functions, $\APs(\R^d)$, defined by
$$
\APs(\R^d):=\AP(\R^d)+\NN(\R^d),
$$ 
which is clearly an ergodic algebra and 
contains all the ergodic algebras containing 
the periodic functions so far known.   

In all that follows, we assume that the ergodic algebra $\A(\R^d)$ is a subalgebra of  $\APs(\R^d)$, that is, $\A(\R^d)\subset\APs(\R^d)$. 
 
Let $\B^2(\R^d)$ denote the $L^2$-Besicovitch 
space associated with $\A(\R^d)$. Set 
\begin{equation*}
	\CT:=\{ v\in\A(\R^n)\cap W^{1,\infty}(\R^n)\,:\, 
	\nabla_a v:= a\cdot\nabla v\in \A(\R^n)\}.
\end{equation*}
We define
\begin{equation*}
	\S :=\biggl\{ v\in
	\B^2(\R^d)\,:\, \Medint_{\R^d} v(y) 
	a(y)\cdot\nabla\varphi(y)\,dy=0,\ 
	\text{ for all  $\varphi\in \CT$}  \biggr\}
\end{equation*}
and its subspaces
\begin{equation*}
\S^*:=\bigl\{ v\in\A(\R^d)\cap W^{1,\infty}(\R^d)\,:\, 
 \nabla_a v=0,\ \text{a.e.}\,\bigr\},
\end{equation*}
and
\begin{equation*}
	\CS^\dag:=\left\{v\in\CS\,:\,\exists\, 
	(v_k)_{k\in\N}\subset\CT, \; 	
	v_k\overset{\B^2\cap L_\loc^2}{\longrightarrow} v\ 
	\text{and}\ \nabla_a 
	v_k\overset{\B^2\cap L_\loc^2}{\longrightarrow}0\right\}.
\end{equation*}
In the periodic case we have $\S^\dag=\S$, as proven in \cite{Da} by applying the commutation lemma in \cite{DiPerna:1989aa}. In general, it holds $\S^*\subset\S^\dag$. 
In \cite{AF} it was shown that for a large collection of 
fields $a(y)\in \left(\AP\cap \Lip\right)(\R^d;\R^d)$, with $\div a=0$, 
the space $\S^*$ is dense in $\S$ in the $\B^2(\R^d)$ topology, when 
$\A(\R^d)=\AP(\R^d)$.  Similarly, in \cite{FS2} also 
a large collection of fields $a(y)\in 
\left(\FS\cap\Lip\right)(\R^d;\R^d)$, with $\div a=0$, 
was described for which the space $\S^*$ 
is dense in $\S$ in the topology of $\B^2(\R^d)$, 
when $\A(\R^d)=\FS(\R^d)$.  Finally, in \cite{JS}, 
it was shown that for any 
$a(y)\in \left(\AP\cap \Lip\right)(\R^d;\R^d)$, 
$\S^\dag$ is dense in $\S$, in 
the topology of $\B^2(\R^d)$, for $\A(\R^d)=\AP(\R^d)$. 
  
We assume that
\begin{equation}\label{e2.7'}
\text{$U_0\in L^\infty(\R^d;\A(\R^d))$,  $U_0(x,\cdot )\in\S$ 
for a.e.\ $x\in\R^d$}.
\end{equation}  

Let $\cK$ be the compactification of $\R^d$ associated with the ergodic 
algebra $\A(\R^d)$, through a classical theorem 
by Stone (see, e.g., \cite{DS,DS2}). For each $y\in\cK$, consider 
the following auxiliary initial value problem 
\begin{align}\label{e2.22}
&dU+ \nabla_x\cdot(\tilde a(y) f(U))\,dt=\k_0\,\s(U)\,dW
+\frac12 \k_0^2\, h(U)\, dt, \quad t>0,\ x\in\R^d,\\
& U(0,x,y)=U_0(x,y), \quad  x\in\R^d, \label{e2.23}
\end{align}
where $\tilde a(y)$ is the orthogonal projection 
of $a(y)$ onto $\S$ in $\B^2(\R^d)$. 
In particular, $\tilde a$ is a Borel function over $\cK$. 
Actually, it has been proven in \cite{JS} (see  
Theorem~3.2 in \cite{JS}) that $\tilde a(y)\in C(\cK)\sim\A(\R^d)$; 
we will not make use of this fact here. The stability properties 
of solutions of the Cauchy problem for 
stochastic scalar conservation laws imply 
that $U\in L^2(\Om;L^\infty((0,T)\X\R^d\X\cK))$, for any $T>0$;
we will comment further on this point in Section~\ref{S:2}. 

Let  $(\Om,\F,\bbP)$ be a probability space, 
$\{\F_t\,:\, 0\le t\le T\}$ be a complete filtration, 
that is, an increasing family of $\s$-algebras contained in $\F$, 
all of them containing all the null sets of $\F$, such 
that $\F_s=\bigcap_{t\ge s}\F_t$. In this paper, for simplicity, we assume
that the $\s$-algebra $\F$ is countably  generated and  $\F_t$  is the filtration generated by the Brownian motion 
$\{W(s)\,:\,0\le s\le t\}$ and $\F_0$, the $\s$-algebra generated by the null sets of $\F$.  
 
If  $X$ is a Banach space,   
let $\cN_W^2(0,T,X)$ denote the space of the predictable 
$X$-valued processes (see, e.g., \cite{DPZ}, p.94, \cite{PR}, p.28).  
This is the same as the space $L^2([0,T]\X\Om,X)$ with 
the product measure $dt\otimes \,d\bbP$ on $\cP_T$, the 
predictable $\s$-algebra, i.e., the $\s$-algebra 
generated by the sets $\{0\}\X\F_0$ and the 
rectangles $(s,t]\X A$ for $A\in\F_s$. 
We denote $\cN_W^2(0,T, L_\loc^2(\R^d)) 
:=\bigcap_{R>0} \cN_W^2(0,T, L^2(B(0,R)))$, 
where $B(0,R)$ is the open ball centered at 0 with 
radius $R$ in $\R^d$. We will say that $u$ is 
predictable if $u\in \cN_W^2(0,T, L_\loc^2(\R^d))$.  
Let us also denote $Q=(0,T)\X\R^d$.

\begin{definition}\label{D:2.1} We say that a predictable 
function $u^\ve \in L^2(\Om; L^\infty(Q))$ 
is an entropy solution of \eqref{e2.1}--\eqref{e2.2} 
if for all convex  $\eta\in C^2(\R)$, for 
$q\in C^2(\R)$, such that $q'(u)=\eta'(u)  f'(u)$, 
and for all  $0\le \varphi \in C_c^\infty((-\infty,T)\X\R^d)$, 
a.s.\ in $\Om$, we have
\begin{multline*}
\int_Q \eta(u^\ve)\po_t\varphi
+q(u^\ve) a\left(\frac{x}{\ve}\right)\cdot\nabla\varphi 
+  \frac{\k_0^2}2 \left( \eta'(u^\ve) h(u^\ve) 
+ \eta''(u^\ve)  \s^2( u^\ve) \right) \varphi\,dx\,dt\\ 
+\k_0\int_0^T\int_{\R^d}\eta'(u^\ve)\s(u^\ve)\varphi\,dx\,dW(t)
+\int_{\R^d}\eta\left(U_0\left(x,\frac{x}{\ve}\right)
\right)\varphi(0,x)\,dx\,dt\ge 0.
\end{multline*}
\end{definition}

\begin{definition}\label{D:2.2} For each $y\in\cK$, we say 
that a predictable function $U(y) \in L^2(\Om; L^\infty(Q))$ 
is an entropy solution of \eqref{e2.22}--\eqref{e2.23} 
if for all convex  $\eta\in C^2(\R)$, for 
$q\in C^2(\R)$, such that $q'(u)=\eta'(u)  f'(u)$, 
and for all  $0\le \varphi \in C_c^\infty((-\infty,T)\X\R^d)$, 
a.s.\ in $\Om$, we have
\begin{multline}\label{e2.25_0}
\int_Q \eta(U(y))\po_t\varphi 
+  q(U(y)) \tilde a(y)\cdot\nabla\varphi 
+  \frac{\k_0^2}2 \left( \eta'(U(y)) h(U(y)) 
+ \eta''(U(y))  \s^2( U(y)) \right) \varphi\,dx\,dt
\\ + \k_0\int_0^T\int_{\R^d}\eta'(U(y))
\s(U(y))\varphi\,dx\,dW(t) 
+\int_{\R^d}\eta(U_0(y))\varphi(0,x)\,dx\,dt\ge 0.
\end{multline}
\end{definition}

\begin{theorem}\label{T:1.2} 
Let $u^\ve$ be the entropy solution of \eqref{e2.1}--\eqref{e2.2}, 
with $U_0$ satisfying \eqref{e2.7'}, and, for 
each $y\in\K$, let $U(y)$ be the 
entropy solution \eqref{e2.22}--\eqref{e2.23}.
Assume that $\S^\dag$ is dense in $\S$ in the 
topology of $\B^2(\R^d)$. Then, we have that $u^\ve\wto u$, in the weak
topology of $L^2(\Om;L_\loc^2(Q))$, that is, $L^2(\Om;L^2((0,T)\X\{|x|<R\})$, for any $R>0$,  where 
\begin{equation*}
	u(t,x)=\int_\K U(t,x,y)\,d\mm(y),
\end{equation*}
and $d\mm(y)$ is the measure on $\K$ 
induced by the mean value on $\A(\R^d)$.   Moreover, if $U\in L^2(\Om;\B^2(\R^d;C_b([0,T]\X\R^d)))$, then $u^\ve(t,x)-U\left(t,x,\frac{x}{\ve}\right)$ strongly converges to zero in $L^2(\Om; L_\loc^2(Q))$.
\end{theorem}

\bigskip
The second problem is the one of a 
stiff oscillatory external force whose 
deterministic case was first addressed in \cite{ES}, in 
the periodic one-dimensional case and later on 
in \cite{AF, AFS} in the almost periodic and ergodic algebras 
multidimensional case. The corresponding equation is as follows
\begin{equation}\label{e1.1}
du^\ve+ \nabla_{x}\cdot f(u^\ve)\,dt
=\frac1{\ve}V'\left(\frac{x_1}{\ve}\right)\,dt
+ \k_0\,\s_{f_1}(u^\ve)\,dW+ \frac12\k_0^2\,h_{f_1}(u^\ve)\,dt,
\end{equation}
where $f=(f_1,\ldots,f_d)$, $f_i:\R\to\R$ are smooth functions, 
$i=1,\ldots,d$, $f_1'\ge \delta_0 >0$, $f_k'\ge 0$, $k=2,\ldots,d$,  
We also assume that
$f'\in L^\infty(\R;\R^d)$ and $f_1',f_1'',f_1'''\in L^\infty(\R)$.
$\k_0\in\R$ is a constant. $V:\R\to\R$ is a smooth 
function belonging to an arbitrary ergodic 
algebra $\A(\R)$, $W:\Om\X[0,T]\to\R$ 
is a standard Brownian motion, 
and $\s_{f_1}$, $h_{f_1}$ are obtained 
from $f_1$ from the expressions
$$
\s_{f_1}(u):= \frac1{f_1'(u)},
\qquad h_{f_1}:=-\frac{f_1''(u)}{f_1'(u)^3}.
$$  
We observe that, from the assumptions on $f_1$,  
it follows that $h_{f_1}'\in L^\infty(\R)$.

Again, in view of the Stratonovich-It\^o 
conversion formula, we note that 
equation \eqref{e1.1} may be written as
\begin{equation*}
du^\ve+ \nabla_{x}\cdot f(u^\ve)\,dt
=\frac1{\ve}V'\left(\frac{x_1}{\ve}\right)\,dt
+ \k_0\,\s_{f_1}(u^\ve)\,\circ dW.
\end{equation*}

We prescribe an initial data for \eqref{e1.1} of the form
\begin{equation}\label{e1.2}
u^\ve(0,x)=u_0\left(x,\frac{x}{\ve}\right),
\end{equation}
which, for simplicity, we may assume to be deterministic, 
whose hypotheses we will specify later on. 

Let  $g=f_1^{-1}$ be the inverse of $f_1$. We assume that, for 
some $v_0\in L^\infty(\R^d)$,  $u_0(x,y)$ satisfies
\begin{equation}\label{e1.2'}
u_0(x,y)=g(V(y)+v_0(x)).
\end{equation}

Let us consider the auxiliary equation 
\begin{equation}\label{e1.4}
d\bar u+\nabla\cdot \bar f(\bar u)\,dt
=\k_0\,\s_{\bar f_1}(\bar u)\,dW 
+\frac12 \k_0^2\,h_{\bar f_1}( \bar u)\,dt,
\end{equation}
where $\bar f=(\bar f_1,\bar f_2,\ldots,\bar f_d)$ , with 
$\bar f_1,\bar f_2,\ldots, \bar f_d$, satisfying
\begin{align}
p &=\Medint_{\R}g\left(\bar f_1(p)
+V(z_1)\right)\,dz_1, \label{e1.5}\\
\bar f_k(p)&=\Medint_{\R}f_k\circ 
g\left(\bar f_1(p)+V(z_1)\right)\,dz_1,  
\quad k=2,\cdots, d, \label{e1.6}
\end{align}
and $\s_{\bar f_1}(\cdot), \ h_{\bar f_1}(\cdot)$ 
are defined as $\s_{f_1},\ h_{f_1}$ 
with $\bar f_1(\cdot)$ instead of $f_1$
We remark that, from the assumptions on $f$ 
and $f_1$, it follows from \eqref{e1.5} and \eqref{e1.6} 
that $\bar f$ and $\bar f_1$ also satisfy 
$\bar f'\in L^\infty(\R;\R^d)$ and 
$\bar f_1',\bar f_1'',\bar f_1'''\in L^\infty(\R)$.

For \eqref{e1.4} the following initial condition is prescribed
\begin{equation} \label{e1.4'}
\bar u(0,x)=\bar u_0(x)
:=\Medint_{\R}u_0(x,z_1)\,dz_1=\bar f_1^{-1}(v_0(x)).
\end{equation} 

\begin{definition}\label{D:1.1} 
We say that $u^\ve \in\cN_W^2(0,T, L_\loc^2(\R^d))
\cap L^2(\Om; L^\infty(Q))$ is an entropy solution 
of \eqref{e1.1}--\eqref{e1.2}, with 
$u_0\left(\cdot, \frac{\cdot}{\ve}\right)
\in L^2(\Om; L^\infty(\R^d))$, 
satisfying \eqref{e1.2'}, if for all convex 
$\eta\in C^2(\R)$, for $q\in C^2(\R,\R^d)$, 
such that $q'(u)=\eta'(u) f'(u)$, and for all 
$0\le \varphi \in C_c^\infty((-\infty,T)\X\R^d)$, 
a.s.\ in $\Om$, we have
\begin{multline*}
\int_Q \eta(u^\ve)\po_t\varphi + q(u^\ve)\cdot\nabla\varphi 
+\eta'(u^\ve)\left( \frac1{\ve}V'\left(\frac{x_1}{\ve}\right)
+\frac{\k_0^2}2 h_{f_1}(u^\ve) \right)\varphi \,dx\, dt \\
+\frac{\k_0^2}2 \int_Q  \s_{f_1}^2(u)\eta''(u)\varphi\,dx\,dt
+\k_0\int_0^T\int_{\R^d}\eta'(u)\s_{f_1}(u)\varphi\,dx\,dW(t) 
\\ + \int_{\R^d}\eta(u)\varphi(0,x)\,dx\,dt\ge 0.
\end{multline*}
\end{definition}  
  
\begin{definition}\label{D:1.2}  
We say that a predictable function $\bar u\in L^2(\Om; L^\infty(Q))$ 
is an entropy solution of \eqref{e1.4}--\eqref{e1.4'} 
if for all convex  $\eta\in C^2(\R)$, for 
$\bar q\in C^2(\R,\R^d)$, such that $\bar q'(u)=\eta'(u) \bar f'(u)$, 
and for all  $0\le \varphi \in C_c^\infty((-\infty,T)\X\R^d)$, 
a.s.\ in $\Om$, we have
\begin{multline*}
\int_Q \eta(\bar u)\po_t\varphi 
+ \bar q(\bar u)\cdot\nabla\varphi +
\frac{\k_0^2}2 \left( \eta'(\bar u) h_{\bar f_1}(\bar u) 
+ \eta''(\bar u)  \s_{\bar f_1}^2(\bar u) \right) \varphi\,dx\,dt\\ 
+\k_0\int_0^T\int_{\R^d}\eta'(\bar u)\s_{\bar f_1}(\bar u)\varphi\,dx\,dW(t) 
+\int_{\R^d}\eta(\bar u_0)\varphi(0,x)\,dx\,dt\ge 0.
\end{multline*}
\end{definition}

We can state our second main result.

\begin{theorem}\label{T:1.1}  
Let $u^\ve$ be the entropy solution of \eqref{e1.1}--\eqref{e1.2}, 
with $u_0$ satisfying \eqref{e1.2'}, and $\bar u$ 
be the entropy solution of \eqref{e1.4}--\eqref{e1.4'}.   
Then, $u^\ve\wto \bar u$ in the 
weak topology of $L^2(\Om;L_\loc^2(Q))$. 
Moreover, 
 $u^\ve(t,x)-U\left(t,x,\frac{x}{\ve}\right)$
strongly converges to zero in $L^2(\Om; L_\loc^2(Q))$, as $\ve\to0$, where 
$U(t,x,y)=g\left(\bar f_1(\bar u(t,x)) +V(y)\right)$. 

\end{theorem}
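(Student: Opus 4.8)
The plan is to establish Theorem~\ref{T:1.1} by combining a two-scale Young measure compactness argument with the dissipative structure of the scalar conservation law, using the explicit stochastic profile $U(t,x,y)=g\bigl(\bar f_1(\bar u(t,x))+V(y)\bigr)$ as the ansatz around which we linearize. First I would verify that $U^\ve(t,x):=g\bigl(\bar f_1(\bar u(t,x))+V(x_1/\ve)\bigr)$ is, up to a controlled error, an approximate entropy solution of \eqref{e1.1}: differentiating formally, the oscillatory forcing $\frac1\ve V'(x_1/\ve)$ is exactly cancelled by the $x$-derivative falling on $V(x_1/\ve)$ inside $g$, because $\partial_{x_1}\bigl[f_1(U^\ve)\bigr]=\bar f_1'(\bar u)\partial_{x_1}\bar u + V'(x_1/\ve)/\ve$; here the relations \eqref{e1.5}--\eqref{e1.6} defining $\bar f$ are precisely what make the remaining slow terms close up to the averaged flux $\bar f$. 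The multiplicative-noise terms are compatible by construction, since $\s_{f_1}(U^\ve)$ and $\s_{\bar f_1}(\bar u)$ are linked through the chain rule $\s_{f_1}(g(w))=1/f_1'(g(w))=g'(w)$, and the It\^o correction $\frac12\k_0^2 h_{f_1}$ matches the second-order term in the expansion of $g$; this is why the special form \eqref{e1.2'} of the initial datum and the choice $\s_{f_1}=1/f_1'$ were imposed. The upshot of this step is an exact identity for $dU^\ve$ of the form $dU^\ve+\nabla\cdot f(U^\ve)\,dt = \frac1\ve V'(x_1/\ve)\,dt+\k_0\s_{f_1}(U^\ve)\,dW+\frac12\k_0^2 h_{f_1}(U^\ve)\,dt + \nabla\cdot R^\ve\,dt$, where $R^\ve=f(U^\ve)-\bar f(\bar u)$ after averaging has mean value zero in the fast variable and hence $\nabla\cdot R^\ve\to0$ in $H^{-1}_{\loc}$, i.e.\ $U^\ve$ is an asymptotic entropy solution.

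Next I would run the stochastic $L^1$-contraction (Kru\v{z}kov doubling) between the genuine entropy solution $u^\ve$ of \eqref{e1.1}--\eqref{e1.2} and the approximate solution $U^\ve$. Because the noise coefficients in both equations are evaluated at the \emph{same} function $\s_{f_1}$ and the stochastic integrals are driven by the same Brownian motion $W$, the It\^o corrections arising in the doubling cancel exactly in expectation (this is the standard mechanism for $L^1$-stability of stochastic scalar conservation laws with the same multiplicative noise, as in the Feng--Nualart / Bauzet--Vallet--Wittbold / Debussche--Vovelle framework), leaving only the contribution of the error term $\nabla\cdot R^\ve$. That contribution is estimated by testing against a spatial cutoff and using $\|R^\ve\|_{L^\infty}\le C$ together with weak convergence $R^\ve\wto0$; in fact one gets $\EE\int_{B_R}|u^\ve(t,x)-U^\ve(t,x)|\,dx \le \EE\int_{B_{R'}}|u_0(x,x/\ve)-u_0(x,x/\ve)|\,dx + o_\ve(1) = o_\ve(1)$, since the initial data coincide exactly by \eqref{e1.2'} and \eqref{e1.4'}. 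This yields $\EE\|u^\ve-U^\ve\|_{L^1(B_R\times(0,T))}\to0$, and then a subsequence-and-diagonal argument upgrades this to $\bbP$-a.s.\ convergence in $L^1_{\loc}(Q)$, which is exactly the second assertion of the theorem once one observes $U^\ve(t,x)=U(t,x,x/\ve)$.

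The first assertion, weak-$\star$ convergence $u^\ve\wto\bar u$ in $L^\infty(Q)$, then follows by averaging: from $u^\ve-U(\cdot,\cdot,\cdot/\ve)\to0$ strongly in $L^1_\loc$ and the boundedness of $u^\ve$ in $L^\infty$, it suffices to show $U(t,x,x/\ve)\wto\bar u(t,x)$ weak-$\star$. Since $y\mapsto g(\bar f_1(\bar u(t,x))+V(y))$ lies in the ergodic algebra $\A(\R^d)$ for each fixed $(t,x)$ (composition of the Lipschitz function $g(\bar f_1(p)+\cdot)$ with $V\in\A$), the mean-value property of $\A$ gives $U(t,x,x/\ve)\wto M_y\bigl[g(\bar f_1(\bar u(t,x))+V(\cdot))\bigr]=\Medint_\R g(\bar f_1(\bar u(t,x))+V(z_1))\,dz_1$, and by the defining relation \eqref{e1.5} this mean equals $\bar u(t,x)$. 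A uniform-in-$(t,x)$ version of this averaging (needed because $\bar u$ itself varies) is obtained by an equicontinuity/density argument, approximating $\bar u$ by simple functions and using $\|g'\|_\infty<\infty$.

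The main obstacle I anticipate is the first step: making rigorous the claim that $U^\ve$ is an approximate entropy \emph{solution} rather than merely an approximate distributional solution. Entropy inequalities are not stable under the $H^{-1}$-small perturbation $\nabla\cdot R^\ve$ in an elementary way, so one must either (i) build the entropy defect measure of $U^\ve$ explicitly and show it is asymptotically nonnegative, exploiting that $U^\ve$ is a smooth (Lipschitz in $x$, given $\bar u\in BV$) composition and that the only genuine discontinuities come from $\bar u$, whose own entropy production has the right sign; or (ii) bypass entropy inequalities for $U^\ve$ altogether and instead run the doubling directly between $u^\ve$ and the \emph{exact} entropy solution $\bar u$ of the averaged equation \eqref{e1.4}, treating the passage through the fast scale via the stochastic two-scale Young measure whose existence is asserted in the paper — identifying the two-scale limit of $f(u^\ve)$ and closing the equation using the $\S$-orthogonality built into $\tilde a$ and the profile relations. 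Option (ii) is cleaner conceptually and is presumably the route the paper takes, with the delicate point being the measurability in $\omega$ and the predictability of the two-scale Young measure, together with the strong-trace argument needed to give meaning to the initial condition in the two-scale limit.
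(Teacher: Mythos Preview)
Your Option (i) has a genuine gap. When you write $dU^\ve + \nabla\cdot f(U^\ve)\,dt = \frac1\ve V'(x_1/\ve)\,dt + \k_0\s_{f_1}(U^\ve)\,dW + \frac12\k_0^2 h_{f_1}(U^\ve)\,dt + \text{(error)}$, the error is not simply $\nabla\cdot R^\ve$ with $R^\ve\wto 0$. A direct It\^o computation on $U^\ve=g(\bar f_1(\bar u)+V(x_1/\ve))$ produces a defect of the form
\[
E^\ve = -g'(\bar f_1(\bar u)+V(\tfrac{x_1}{\ve}))\,\bar f_1'(\bar u)\,\nabla\cdot\bar f(\bar u)
+\partial_{x_1}\bigl[\bar f_1(\bar u)\bigr]
+\sum_{k=2}^d \partial_{x_k}\bigl[f_k\circ g(\bar f_1(\bar u)+V(\tfrac{x_1}{\ve}))\bigr],
\]
which is a bounded oscillatory quantity (times slow derivatives of $\bar u$), not an $H^{-1}$-small one. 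In the Kru\v{z}kov doubling this error is paired against $\sgn(u^\ve-U^\ve)$, which itself oscillates at scale $\ve$; there is no cancellation mechanism, and the product does not vanish. Moreover, $U^\ve$ inherits the shocks of $\bar u$, so it is not a strong solution and the entropy defect of $U^\ve$ is nontrivial to identify. This step cannot be closed as described.

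Your Option (ii) is also off target. The reference to $\tilde a$ and $\S$-orthogonality belongs to the \emph{other} homogenization problem (Theorem~\ref{T:1.2}); in the stiff-forcing problem there is no divergence-free transport field and no projection onto $\S$. And the paper does not compare $u^\ve$ with $\bar u$ directly.

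What the paper actually does is this: it compares $u^\ve$ not with $U^\ve$ but with the one-parameter family of \emph{exact} solutions $\psi_\a(t,x_1/\ve)=g\bigl(V(x_1/\ve)+\k_0 W(t)+\a\bigr)$, $\a\in\R$. Because these are exact entropy solutions, the stochastic Kru\v{z}kov inequality holds without any error term. Passing to the two-scale Young measure $\nu_{t,x,y}$ in that inequality, first with test function $\ve\varphi(x_1/\ve)\psi(t,x)$, shows that the pushforward measure $\mu_{t,x,y}$ defined by $\la\mu_{t,x,y},\theta\ra=\la\nu_{t,x,y},\theta(f_1(\lambda)-\k_0W(t)-V(y))\ra$ is in fact independent of $y$, i.e.\ $\mu_{t,x,y}=\mu_{t,x}$. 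Then, using the ``miraculous'' identities $\s_{\bar f_1}\circ\bar g(v)=\medint\s_{f_1}\circ g(v+V(y))\,d\mm(y)$ and the analogous one for $h$, the full Kru\v{z}kov inequality (with slow test functions and $\a$ variable) yields a stochastic kinetic equation for $\rho_1(t,x,\xi)=\mu_{t,x}((\xi,\infty))$. A parallel computation, starting from the Kru\v{z}kov inequality for $\bar u$ against the exact special solutions $\psi_{*\g}(t)=\bar g(\g+\k_0W(t))$ of the homogenized equation, gives the same kinetic equation for $\rho_2(t,x,\xi)=\En_{\xi<\bar f_1(\bar u)-\k_0W(t)}$. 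A rigidity lemma (Lemma~\ref{l3.1}) for this nonstandard kinetic equation then forces $\rho_1=\rho_2$, hence $\nu_{t,x,y}=\delta_{U(t,x,y)}$, from which both conclusions follow. The key idea you are missing is to compare against the explicit \emph{exact} stochastic profiles $\psi_\a$, $\psi_{*\g}$ and to pass through a kinetic formulation.
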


Before we make an account of earlier works connected 
to the present one, both in homogenization theory and 
in the theory of SPDEs, and a brief 
description of the contents in this paper, we remark for 
practical purposes that the stochastic perturbation of 
the deterministic versions, of the 
equations we deal with herein, are determined 
by the stochastic equations satisfied by 
certain special solutions, which in turn are natural 
stochastic extensions of the stationary solutions of the 
corresponding deterministic versions, which play a central role in the 
homogenization process in the deterministic case. 
Homogenization theory has been useful in many well known 
cases to derive equations from mechanics 
and other applied areas, as the Darcy law in two-phase flows 
in porous media (see, e.g., the 
famous appendix by Tartar in \cite{SP}), and we 
believe that the way the stochastic 
perturbations were derived here 
may be useful in applications. 

This paper is concerned with both the theory of homogenization of 
partial differential equations and the theory 
of stochastic differential equations. The homogenization 
theory of partial differential equations has been 
a field of intense research since the 1970's and we refer to the 
classical book \cite{BLP} for an account of this theory up to 1978. 
We also refer to the other classical book \cite{JKO} where a section 
is devoted to the homogenization theory in 
the context of ergodic algebras, which is the setting 
adopted in this paper. The homogenization methods 
used in this paper are based on those developed 
in \cite{AF} and \cite{AFS},  which in turn are mostly 
based on the concept of two-scale Young measures for almost periodic 
oscillations and its natural extension to ergodic algebras. 
Two-scale Young measures were introduced in the 
periodic case in \cite{E} (see also \cite{ES}) 
as an extension to the notion of two-scale convergence 
introduced in  \cite{N} and further developed in \cite{A} (see also \cite{Da}). 
Two-scale convergence for general oscillations in 
ergodic algebras were established in \cite{CG}, 
and corresponds to the linear case of the 
two-scale Young measures established in \cite{AF}, 
as proved in \cite{FSV}. 

The theory of stochastic partial differential 
equations has experienced intense progress in the 
last three decades and we cite the 
treatise \cite{DPZ} for a basic 
general account of this theory and references.
More specifically, concerning the theory 
of stochastic conservation laws, we 
mention the first contributions by Kim \cite{Kim}, 
and Feng and Nualart \cite{FN}. The latter was further 
developed in Chen, Ding, and Karlsen in \cite{CDK} 
and Karlsen and Storr{\o}sten in \cite{KSt}.  
An inflection in the course of this theory 
was achieved by Debussche and Vovelle \cite{DV} 
with the introduction of the notion of 
kinetic stochastic solution, extending 
the corresponding deterministic concept introduced by 
Lions, Perthame, and Tadmor \cite{LPT}.  
We also mention the independent development 
in this theory made by Bauzet, Vallet, and Wittbold \cite{BVW1}.  Concerning homogenization of stochastic partial 
differential equations, this has not been a frequently researched 
topic, although the earliest contribution seems to have appeared 
already in the early 1990's by Bensoussan in \cite{B}. 
As to more recent publications on this subject, we mention the 
contributions of Ichihara~\cite{Ic}, Sango~\cite{Sa}, 
Mohammed~\cite{MM}, and Mohammed and Sango~\cite{MMS}, 
among others. Consult also references in these papers.    

Concerning our method for proving Theorem~\ref{T:1.2} and 
Theorem~\ref{T:1.1}, the core of our technique
is to begin by using two-scale Young measures, 
as in \cite{E, ES, AF, AFS}, for instance,  then to 
derive a stochastic  kinetic equation satisfied 
by the generalized kinetic function associated with 
the two-scale Young measure,  and then to apply 
a uniqueness result for weak solutions of the 
corresponding stochastic kinetic equation, as 
is done in \cite{Da} in the deterministic 
periodic case for general conservation laws.  
  
This paper is organized as follows.  In Section~\ref{S:1'} we state and prove a 
result on the existence of stochastic two-scale Young measures which will be used in 
the two subsequent sections. 
In Section~\ref{S:2}, we address the 
homogenization of the stochastic nonlinear transport equation. 
In Section~\ref{S:3}, we deal with the same problem for the stochastic 
stiff oscillatory external force equation.  In Section~\ref{S:4} we 
establish a general well-posedness result for
stochastic conservation laws, which fits the needs of the present article. 
Finally, in Section~\ref{S:5}, we gather a general comparison 
principle and the so-called stochastic Kru\v zkov inequality. 
both needed for the analysis in Sections~\ref{S:2} and~\ref{S:3}.   
 
\section{Stochastic two-scale Young measures} \label{S:1'}

In the following sections our analysis will be based on the notion of two-scale Young measures as was
 done in the deterministic case in, e.g.,  \cite{E,ES, AF, AFS}. For future reference, we next state as a proposition 
 the existence of stochastic two-scale Young measures associated with  (generalized) subsequences satisfying bounds such as  \eqref{e2.4''} or \eqref{e1.8''} below. The proof follows  ideas  in \cite{AF}. Nevertheless, here there is the probability space $\Om$ and the stochastic integral  as new ingredients. Also we need to establish an estimate ({\em cf.}\ \eqref{eq:kinetic-Lp-bound-Young'}) that will be needed in the following sections.  
  Therefore, we include a detailed proof here for the convenience of the reader.  For simplicity, to avoid the use of generalized subsequences, we assume that our ergodic algebra is separable. 
 In practice, this means that if $\Psi_1(t,x,\frac{x}{\ve}, u),\ldots, \Psi_N(t,x,\frac{x}{\ve}, u)$ is the finite family of continuous oscillatory functions involved in our homogenization problem, we consider the closure of the subalgebra of $\A(\R^n)$ (invariant by translations) generated by the functions 
 $g_{\a_1,\b_1,\g_1}(y):=\Psi_1(t_{\a_1},x_{\b_1},y, u_{\g_1}),\cdots, g_{\a_N,\b_N,\g_N}(y):=\Psi_N(t_{\a_N},x_{\b_N}, y, u_{\g_N})$, $\a_i,\b_i,\g_i\in
 \N$, $i=1,\cdots,N$, where  $\{(t_{\a_i},x_{\b_i},u_{\g_i})\,:\, \a_i,\b_i,\g_i\in\N\}$ is a countable dense subset of $[0,\infty)\X\R^d\X\R$, for $i=1,\cdots,N$.

 \begin{proposition}\label{P:1.1}  Let $(\Om,\F,\bbP)$ be a probability space, with $\F$ countably generated, let $\F_t$ be 
 the filtration generated by the Brownian motion $W(t)$ and $\F_0$,
the  $\s$-algebra generated by the null sets of $\F$. Let $\A(\R^d)$ be a separable ergodic algebra and $\cK$ the associated separable compact space such that $\A(\R^d)\sim C(\cK)$, with associated invariant measure 
 $d\mm(y)$.  Let $u^\ve$, $\ve>0$, be a sequence of predictable functions in  $L^p(\Om; L_\loc^1([0,\infty)\X\R^d))$, for all $p\ge1$,  satisfying 
 \begin{equation}\label{e2.P1.1}
\abs{u^\ve(\om,t,x)}\le C_*(1+\abs{W(\om,t)}^{N_0}), 
\quad\text{for a.e.\ $(\om,t,x)\in\Om\X[0,\infty)\X\R^d$}, 
 \end{equation}
 for some $C_*>0$ and $N_0\in\N$. Let $w_N$ be 
 defined in \eqref{eq:scl-weight-def}. Then, there exists a subsequence,  $u^{\ve_k}$, $\ve_k \to0$, 
 and  a parameterized family of probability measures over 
 $\R$, $\nu_{\om,t,x,y}$,
 satisfying the properties:
 \begin{enumerate}
\item[(1)]  $\nu_{\om,t,x,y}$ is measurable, in the sense that for any $\z\in C_c(\R)$, $\la \nu_{\om,t,x,y},\z\ra$ is measurable with respect  to the sigma-algebra $\F\otimes\bbB([0,\infty)\X\R^d)\otimes\bbB(\cK)$;
 
 \item[(2)] For any $A\in\F$, denoting by $\EE_A$ the conditional expectation with respect to $A$, for all $\Psi\in C_c([0,\infty)\X\R^d\X\R;\A(\R^d))\sim C_c([0,\infty)\X\R^d\X\cK\X\R)$,
 \begin{multline}\label{e2.YM}
 \lim_{k\to\infty}{\EE}_A\int_{[0,\infty)\X\R^d}\Psi\left(t,x,\frac{x}{\ve_k},u^{\ve_k}(t,x)\right)w_N(x)\,dt\,dx\\=
{\EE}_A \int_{[0,\infty)\X\R^d\X\cK}\la \nu_{\om,t,x,y}, \Psi(t,x,y,\cdot)\ra w_N(x)\,d\mm(y) \,dt\,dx.
 \end{multline}
 
 \item[(3)] For a.e.\ $y\in\cK$, for all $T>0$, we have
 \begin{equation}\label{eq:kinetic-Lp-bound-Young'}
	\EE\left(\esup_{t\in[0,T]}
	\iint_{\R^d\times \R}\abs{\xi}^p
	w_N(x)\, \nu_{\omega,t,x,y}(d\xi) \dx\right)\le C_{T,N,p} , 
	\qquad 
	\forall p\in [1,\infty),
\end{equation}
where $C_{T,N,p}$ is a positive constant depending only on $T,N,p$. 

\item[(4)] If $\Psi\in C([0,\infty)\X\R^d\X\R;\A(\R^d))\sim C([0,\infty)\X\R^d\X\cK\X\R)$ is such that
$|\Psi(t,x,y,\xi)|\le 1_{[0,T_0]}(t) C(1+|\xi|^p)$, for some $p\ge1$ and $T_0>0$, then \eqref{e2.YM} holds for all 
$A\in\F$. More generally, for such $\Psi$,  if $\ell\in L^2(\Om)$ and $\tilde \Psi(\om,t,x,y,\xi)=\ell(\om)\Psi(t,x,y,\xi)$,
then
\begin{multline}\label{e2.YMom}
 \lim_{k\to\infty}{\EE}\int_{[0,\infty)\X\R^d}\tilde\Psi\left(\om,t,x,\frac{x}{\ve_k},u^{\ve_k}(t,x)\right)w_N(x)\,dt\,dx\\=
{\EE}\int_{[0,\infty)\X\R^d\X\cK}\la \nu_{\om,t,x,y}, \tilde\Psi(\om, t,x,y,\cdot)\ra w_N(x)\,d\mm(y) \,dt\,dx.
 \end{multline}

\item[(5)] If $\Psi\in C([0,\infty)\X\R^d\X\R;\A(\R^d))$ satisfying $|\Psi(t,x,y,\xi)|\le 1_{[0,T_0]}(t) C(1+|\xi|^p)$,
for some $p\ge1$ and $T_0>0$,  then
$$
(\om,t)\mapsto \int_{\R^d}\int_\cK \la \nu_{\om,t,x,y}\,, \Psi(t,x,y,\cdot)\ra w_N(x)\,d\mm(y)\,dx 
$$
is a predictable process on $\Om\X[0,\infty)$,  and, for any $A\in\F$,
  \begin{multline}\label{e2.YM'}
 \lim_{k\to\infty}{\EE}_A\int_0^\infty\int_{\R^d}\Psi\left(t,x,\frac{x}{\ve_k},u^{\ve_k}(t,x)\right)w_N(x)\,dx\,dW(t)\\=
{\EE}_A \int_0^\infty \int_{\R^d}\int_\cK \la \nu_{\om,t,x,y}\,, \Psi(t,x,y,\cdot)\ra w_N(x)\,d\mm(y)\,dx \,dW(t).
 \end{multline}
 Moreover, for $\mm$-a.e.\ $y\in\cK$,  
 $$
(\om,t)\mapsto \int_{\R^d}\la \nu_{\om,t,x,y}\,, \Psi(t,x,y,\cdot)\ra w_N(x)\,dx 
$$
is a predictable process on $\Om\X[0,\infty)$. 

 \end{enumerate}
 
 \end{proposition}
 
 \begin{proof} Let $W^*(t):=\max_{0\le s\le t}|W(s)|$ and, given $M>0$,  let $t_M:=\inf\{t\ge 0\,:\, W^*(t)\ge M\}$. Given 
  $T>0$, for $M$ sufficiently large, $t_M>T$. Therefore, taking $M\in\N$, making $M\to\infty$, and defining $\Om_M(T):=
 \{\om\in\Om\,:\, t_M(\om)>T\}$, we see that $\bbP\left(\Om\setminus\Om_M(T)\right)\to0$. Indeed, $\Om_M(T)$ is an increasing family of subsets of $\Om$ and if $\bbP\left(\Om\setminus\bigcup_{M\in\N}\Om_M(T)\right)>0$,
 then we would be able to find $\om\in\Om$ for which $W(t)$ is defined and continuous for $t\in[0,\infty)$ and such that
 $W^*(\om,t)\to+\infty$ as $t\to T$, which is absurd. We fix $T>0$, and, for simplicity we write simply $\Om_M$ instead of 
 $\Om_M(T)$.  So, for each $M\in\N$, we have that $u^\ve$ is a bounded sequence in $L^\infty(\Om_M\X[0,T]\X\R^d)$.
 Let us consider the countable family of real valued functions over $\Om_M$,  $\mathfrak{F}:=\{ W(\cdot, r)\,:\, r\in\mathbb{Q}\cap[0,T]\}$.  We may assume, without loss of generality, that the functions of the family 
 $\mathfrak{F}$  are defined at every point of  $\Om_M$ and that $\mathfrak{F}$ distinguishes between the points of $\Om_M$, that is, given 
 $\om_1,\om_2\in\Om_M$, $\om_1\ne\om_2$, then there is $r\in\mathbb{Q}\cap[0,T]$ such that $W(\om_1,r)\ne W(\om_2,r)$.
 The first assertion is clear since we may find a set of null $\bbP$-measure in $\Om$ out of which the functions in the countable family $\mathfrak{F}$ are defined everywhere, and so we can define them as 0 over this null $\bbP$-measure subset of $\Om$. The second assertion follows from the fact that we can define in $\Om$ the equivalence relation $\om_1\sim\om_2$
 if and only if $W(\om_1,r)=W(\om_2,r)$ for all $r\in\mathbb{Q}\cap[0,T]$. Then we define the quotient space 
 $\tilde \Om:=\Om/\sim$, with  the natural projection $\pi_{\sim}:\Om\to\tilde\Om$, $\pi_{\sim}(\om)=[
 \om]$, where $[\om]$ is the $\sim$-equivalence class of $\om$. We also define the class $\tilde\F$ of subsets of 
 $\tilde \Om$ by $\tilde A\in \tilde\F$ if and only if $\pi_\sim^{-1}(\tilde A)\in\F$, and for $\tilde A\in\tilde \F$ we define
 $\tilde \bbP(\tilde A)=\bbP(\pi_{\sim}^{-1}(\tilde A))$. It is easy to check that $\tilde \F$ is a sigma-algebra and $\tilde \bbP$ is a probability measure on $\tilde\Om$. Moreover, $W(t)$ is a Brownian motion over $\tilde\Om$, since the distributions of $W(t)$, $t\in[0,T]$,  on $(\Om,\F,\bbP)$ and on $(\tilde \Om,\tilde\F,\tilde\bbP)$ coincide; therefore, for all purposes, we can assume that the family $\mathfrak{F}$ distinguishes between the points of $\Om$; otherwise we replace $(\Om,\F,\bbP)$ by  the quotient space 
 $(\tilde \Om,\tilde \F, \tilde\bbP)$ and, once we obtain the result for the latter, it can be automatically lifted up to the original probability space $(\Om,\F,\bbP)$. 
 
 Let $B(\Om_M)$ be the algebra of bounded functions over $\Om_M$. Let  
 $\mathfrak{A}$ be the closed subalgebra of $B(\Om_M)$ generated by $\{1,\mathfrak{F}\}$. According to a well-known extension of the Stone-Weierstrass theorem (see \cite{DS}, p.274--276, Theorem~18 and Corollary~19) there exist a compact Hausdorff space $\overline{\Om_M}$ and an one-to-one embedding of $\Om_M$ as a dense subset of 
 $\overline{\Om_M}$, such that  each $\psi\in \mathfrak{A}$ has a unique continuous extension $\overline{\psi}$  to 
 $\overline{\Om_M}$, and such that the correspondence $\psi  \leftrightarrow \overline{\psi}$ is an  isomeric isomorphism between 
 $\mathfrak{A}$ and $C(\overline{\Om_M})$.  Moreover, the relation
 $$
 \int_{\overline{\Om_M}}\overline{\psi}(\om)\,d\bbP(\om):=\int_{\Om_M}\psi(\om) \,d\bbP(\om)
 $$
 defines $\bbP$ as a Radon measure over $\overline{\Om_M}$. In particular, we can endow $\Om_M$ with the topology induced by the embedding $\Om_M\to \overline{\Om_M}$ with respect to which $\bbP$ is a Radon measure and $\Om_M$ is relatively compact. Therefore, henceforth, for simplicity, we consider $\Om_M$ as compact and $\bbP$ as a Radon measure on $\Om_M$,  with the referred topology, which coincides with the topology generated by the family $\mathfrak{F}$.

 Let $L_M:=C_*(1+M^{N_0})$, where $C_*$ is as in \eqref{e2.P1.1}.  Denote by $C_0(\Om_M\X[0,T]\X \R^d\X [-L_M,L_M];\A(\R^d))$ the space of functions $\Psi(\om, t,x,y,\xi)$ continuous in
 $\Om_M\X[0,T]\X\R^d\X\R\X\R^d$, belonging to $\A(\R^d)$, as functions of $y$, for each fixed $(\om,t,x,\xi)\in \Om_M\X[0,T]\X\R^d\X\R$,  and such that $\Psi(\cdot,\cdot,x,\cdot,\cdot)\to0$  as $|x|\to\infty$, uniformly in  $\Om_M\X[0,T]\X\R\X\R^d$. Clearly,  $C_0(\Om_M\X[0,T]\X \R^d\X [-L_M,L_M];\A(\R^d))$ is isometrically isomorphic to 
 $C_0(\Om_M\X[0,T]\X \R^d\X\cK\X [-L_M,L_M])$, defined similarly. Given $\Psi\in C_0(\Om_M\X[0,T]\X \R^d\X [-L_M,L_M];\A(\R^d))$,
 define 
 $$
 \la \mu_M^{\ve}, \Psi\ra:=\int_{\Om_M\X [0,T]\X \R^d} 
 \Psi\left(\om,t,x,\frac{x}{\ve}, u^\ve\right) w_N\,dt\,dx\,d\bbP(\om).
 $$  
 Because we are assuming $|u^\ve(\om,t,x)|\le C_0(1+|W(t)|^{N_0})$, the above equation defines $\mu_M^\ve$ as a bounded sequence of Radon measures on  $\Om_M\X[0,T]\X\overline{ \R^d} \X\cK\X[-L_M,L_M]$, where $\overline{\R^d}$ is the one point compactification of $\R^d$ generated by $C_0(\R^d)$, the continuous functions on $\R^d$ vanishing at $\infty$.  Since the space   of the Radon measures on  $\Om_M\X[0,T]\X \overline{\R^d} \X\cK\X[-L_M,L_M]$  is compact in the weak-$\star$ topology by the Banach-Alaoglu theorem, we can 
find a subsequence $\mu_M^{\ve_{M,k}}$ converging to some Radon measure $\mu_M$ on $\Om_M\X[0,T]\X \overline{\R^d}\X\cK\X[-L_M,L_M]$. Making $M=1,2,\cdots$, we can extract for each $M>1$ a subsequence $\ve_{M,k}$ from the subsequence obtained for $M-1$, $\ve_{M-1,k}$, inductively, and then take the diagonal subsequence $\ve_{k,k}=:\ve_k$.  Observe that 
$\mu_M^{\ve}$
restricted to $\Om_{M-1}$, coincides with $\mu_{M-1}^{\ve}$. Therefore, the limit measure 
$\mu=\lim \mu_k^{\ve_{k}}$, which is well defined in $\Om_M\X[0,T]\X\R^d\X\cK\X\R$, for each $M\in\N$,  is then defined in $\Om\X[0,T]\X\R^d\X\cK\X\R$ and   coincides with $\mu_M$ when restricted to $\Om_M\X[0,T]\X\R^d\X\cK\X\R$. In particular, for all 
$C_c(\Om \X[0,T]\X \R^d\X \R;\A(\R^d))$ we have
\begin{multline}\label{e2.YMM} 
\lim_{k\to\infty}\int_{\Om}\int_0^T\int_{\R^d} 
\Psi\left(\om,t,x,\frac{x}{\ve_{k}},u^{\ve_{k}}\right)\,d\bbP\,dt\,w_Ndx\\
=\int_{\Om\X[0,T]\X\R^d\X\cK\X\R} 
\Psi(\om,t,x,y,\xi)\,d\mu(\om,t,x,y,\xi).
\end{multline}

Now it is easy to check that the projection of the measure $\mu$, obtained above, over $\Om\X[0,T]\X\R^d\X\cK$ is equal to $d\bbP\,dt\,w_Ndx\, d\mm(y)$, since this is true for any $\mu_M^{\ve}$.  We can then 
apply the theorem on disintegration of measures (see, e.g., theorem~2.28 in \cite{AFP}, whose extension to the present case is straightforward) to conclude the existence of a $d\bbP\,dt\,w_N dx\, d\mm(y)$-measurable family of probability measures $\nu_{\om,t,x,y}$ such that, for any $\Psi\in L^1(\Om\X[0,T]\X\R^d\X\cK\X\R;\mu)$ we have   
\begin{multline*}
\int_{\Om\X[0,T]\X\R^d\X\cK\X\R}\Psi(\om,t,x,y,\xi)\,d\mu(\om,t,x,y,\xi)\\
=
\int_{\Om\X[0,T]\X\R^d\X\cK}\left(\int_{\R}\Psi(\om,t,x,y,\xi)\, d\nu_{\om,t,x,y}(\xi)\right)\,d\bbP\,dt\,w_N dx\,d\mm(y).
\end{multline*}
 In particular, item (1) follows. 
 
 As for (2), it is enough to prove the result for all $A\in\mathcal{F}_t$, $t\ge 0$. So, take $A\in\F_{T_0}$ for some $T_0\ge 0$.  We can repeat the above construction 
 for $T=1,2,\cdots$, starting at $T=k$ with the subsequence obtained in $T=k-1$ and so, using again the diagonal argument,  we may define a subsequence which is good for any time interval $[0,T]$, with $T>0$ arbitrary. 
 In particular, we may assume that, for each $M\in\N$, $A\cap \Om_M$ is a Borel set in our topology for $\Om_M$.
 Therefore, given $M\in\N$, we can find sets $K$ and $V$ with $K$ compact and $V$ open in $\Om_M$ satisfying $K\subset A\cap\Om_M\subset V$ and such that $\bbP(A\cap\Om_M\setminus K)$ and $\bbP(V\setminus A\cap\Om_M)$ are
 arbitrarily small. We can also find $\psi\in C(\Om_M)$, with $1_K\le \psi\le 1_V$. Using a sequence of such $\psi\in C(\Om_M)$
 in \eqref{e2.YMM}, we get, for any $\Psi\in C_c( [0,\infty)\X\R^d\X\cK\X\R)$, 
 \begin{multline}\label{e2.YMM'} 
\lim_{k\to\infty}\int_{A\cap\Om_M}\int_0^T\int_{\R^d} 
\Psi\left(t,x,\frac{x}{\ve_{k}},u^{\ve_{k}}\right)
\,d\bbP\,dt\,w_Ndx\\
=\int_{A\cap\Om_M\X[0,T]\X\R^d\X\cK\X\R} \Psi(t,x,y,\xi)\,d\mu(\om,t,x,y,\xi).
\end{multline} 
Making $M\to\infty$, we get (2).   
 
 Concerning (3), given $\z\in C([0,T])$, with $\|\z\|_{L^1([0,T])}=1$ and $\varphi\in C(\cK)$, with $\|\varphi\|_{L^1(\cK)}=1$, for each $M$ we have
 \begin{multline*}
{\EE}_{\Om_M}\int_0^T\z(t)\left( \int_{\K}\varphi(y)\left( \int_{\R^d} w_N(x)\left(\int_\R|\xi|^p\,d\nu_{\om,t,x,y}(\xi)\right)\,dx\right)\,d\mm(y)\right)\,dt \\
 =\lim_{k\to\infty}{\EE}_{\Om_M}\int_0^T\z(t) \left( \int_{\R^d}\varphi\left(\frac{x}{\ve_k}\right) w_N(x)
 \abs{u^{\ve_{k}}}^p \,dx\right)\,dt\\
\le \lim_{k\to\infty} C\EE\int_0^T\z(t)\int_{\R^d} 
\left(1+\abs{W(t)}^{N_0}\right)^p
\abs{\varphi\left(\frac{x}{\ve_k}\right)} w_N(x)\,dx\,dt \\
\le C\|\z\|_{L^1([0,T])}\|\varphi\|_{L^1(\cK)} \int_{\R^d}w_N(x)\,dx \EE\sup_{0\le t\le T}\left(1+\abs{W(t)}^{N_0}\right)^p.
 \end{multline*} 
Since the right-hand side does not depend on $M$ we obtain
\begin{multline*}
{\EE} \int_0^T\z(t)\left( \int_{\K}\varphi(y)\left( \int_{\R^d} w_N(x)\left(\int_\R|\xi|^p\,d\nu_{\om,t,x,y}(\xi)\right)\,dx\right)\,d\mm(y)\right)\,dt \\ 
\le C\|\z\|_{L^1([0,T])}\|\varphi\|_{L^1(\cK)} \int_{\R^d}w_N(x)\,dx \EE\sup_{0\le t\le T}(1+|W(t)|^{N_0})^p\\
\le C\|\z\|_{L^1([0,T])}\|\varphi\|_{L^1(\cK)} 
\int_{\R^d}w_N(x)\,dx \EE \left(1+\abs{W(T)}^{N_0}\right)^p,
 \end{multline*} 
where the latter inequality follows from Doob's maximal inequality, and the fact that $(1+|W(t)|^{N_0})^p$ is a submatingale (see, e.g., \cite{DPZ}). Taking the $\sup$ for $\z\in L^1([0,T])$, with $\|\z\|_{L^1([0,T])}=1$, and $\varphi\in L^1(\cK)$, with $\|\varphi\|_{L^1(\cK)}=1$,
 we finally get
$$
{\EE} \sup_{t\in[0,T]}\sup_{y\in\cK} 
\int_{\R^d} w_N(x)\left(\int_\R|\xi|^p
\,d\nu_{\om,t,x,y}(\xi)\right)\,dx 
\le C_{T,N,p},
$$
and so \eqref{eq:kinetic-Lp-bound-Young'} follows.  

Concerning (4), for $\Psi\in C([0,\infty)\X\R^d\X\R;\A(\R^d))$ is such that
$|\Psi(t,x,y,\xi)|\le 1_{[0,T_0]}(t) C(1+|\xi|^p)$, for some $p\ge1$ and $T_0>0$, 
 \begin{multline*}
 \lim_{k\to\infty}{\EE}_A\int_{[0,\infty)\X\R^d}\Psi\left(t,x,\frac{x}{\ve_k},u^{\ve_k}(t,x)\right)w_N(x)\,dt\,dx\\
 =\frac{\bbP(A\cap\Om_M)}{\bbP(A)}\lim_{k\to\infty}{\EE}_{A\cap\Om_M}\int_{[0,T]\X\R^d} \Psi\left(t,x,\frac{x}{\ve_k},u^{\ve_k}(t,x)\right)w_N(x)\,dt\,dx +R_M\\
= \frac{\bbP(A\cap\Om_M)}{\bbP(A)} {\EE}_{A\cap\Om_M} \int_{[0,T]\X\R^d\X\cK}\la \nu_{\om,t,x,y},\tilde \Psi(t,x,y,\cdot)\ra w_N(x) \,dt \,dx\,d\mm(y) + R_M,
 \end{multline*}
where,
\begin{align*}
|R_M|&\leq \frac{\bbP(A\cap(\Om\setminus\Om_M))}{\bbP(A)}{\EE}_{A\cap(\Om\setminus\Om_M)}\int_{[0,T]}C(1+|W(t)|^{N_0})^p\,dt
 \int_{\R^d} w_N(x)\,dx\\
 &= \int_{\R^d} w_N(x)\,dx   \frac{1}{\bbP(A)}\int_{A\cap(\Om\setminus\Om_M)}\int_{[0,T]}C(1+|W(t)|^{N_0})^p\,dt\,d\bbP,
\end{align*}
which yields \eqref{e2.YM}, for such $\Psi(t,x,y,\xi)$,  by making $M\to\infty$. In particular, \eqref{e2.YMom} follows for $\ell(\om)=\frac1{\bbP(A)}1_A$, for $A\in\F$. Now, given any $\ell\in L^2(\Om)$, \eqref{e2.YMom} follows by approximating $\ell$ in $L^2(\Om)$ by finite linear combinations of indicator functions, which concludes the proof of (4). 

We now pass to the proof of (5). Let $\ve_k$ be the subsequence obtained above.  First, we note that \eqref{e2.P1.1} and the assumed bound on $\Psi$ implies that the sequence 
\[
(\om,t)\to \int_{\R^d}\Psi\left(t,x,\frac{x}{\ve_k},u^{\ve_k}(t,x)\right) w_N(x)\,dx, \qquad k\in \mathbb{N}
\]
is uniformly bounded in $L^2(\Om\X [0,T])$ and so it has a subsequence that converges weakly in $L^2(\Om\X [0,T])$. Since each element of the sequence is predictable, then the limit, which by \eqref{e2.YM} equals  
\[
\int_{\R^d}\int_{\cK}\la \nu_{\om,t,x,y}, \Psi(t,x,y,\cdot)\ra w_N\,dx \,d\mm(y),
\]
is also predictable.

Fix $T>0$, and consider the sequence of random variables
\[
X_k:=\int_0^T\int_{\R^d}\Psi\left(t,x,\frac{x}{\ve_k},u^{\ve_k}(t,x)\right)w_N(x)\,dx\,dW(t).
\]
Define also
\[X:= \int_0^T\int_{\R^d\X\cK}\la \nu_{\om,t,x,y}, \Psi(t,x,y,\cdot)\ra w_N(x)\,d\mm(y)\,dx \,dW(t).
\]

To prove (5) it is enough to show that any subsequence $\{X_{k_j}\}$ has a further subsequence that converges to $X$ weakly in $L^2(\Om)$.

Take any subsequence $\{X_{k_j}\}_j$. By \eqref{e2.P1.1} and the It\^ o isometry, we have that the $\{X_{k_j}\}_j$ is uniformly bounded in $L^2(\Om)$. Thus, it has a further subsequence which converges weakly to some $\tilde X\in L^2(\Om)$. For simplicity of notation, we denote this sub-subsequence by $\{X_{k_j}\}_j$ as well. In particular, for any predictable square integrable process $C(t)$ we have that
\begin{equation}\label{e.dW1}
\lim_{j\to\infty}\EE \left( X_{k_j} \int_0^T C(t)
\, dW(t) \right) = \EE \left(\tilde X  \int_0^T C(t)
\, dW(t)\right).
\end{equation}

On the other hand, using the It\^ o isometry and 
applying \eqref{e2.YM} we see that
\begin{align*}
&\lim_{j\to\infty}\EE \left( X_{k_j} \int_0^T C(t)\, dW(t) \right) \\
&\qquad=\lim_{j\to\infty}\EE\left( \int_0^T C(t)\int_{\R^d}\Psi\left(t,x,\frac{x}{\ve_{k_j}},u^{\ve_{k_j}}(t,x)\right)w_N(x)\,dx\, dt\right)\\
  &\qquad=\EE\left( \int_0^T C(t)\int_{\R^d\X\cK}\la \nu_{\om,t,x,y}, \Psi(t,x,y,\cdot)\ra w_N(x)\,dx\,d\mm(y) \, dt\right)
\end{align*}
Now, using the It\^ o isometry once again we see that
\begin{equation}\label{e.dW2}
\lim_{j\to\infty}\EE \left( X_{k_j} \int_0^T C(t)
\, dW(t) \right) = \EE \left(X  \int_0^T C(t)\, dW(t)\right).
\end{equation}

Comparing \eqref{e.dW1} and \eqref{e.dW2} we can conclude that $X=\tilde X$ a.s.. Indeed, note that $X$ is $\cF_T$-measurable, since every $X_k$ is. Also, note that $\EE(\tilde X)=\lim_{j\to\infty}\EE(X_{k_j})=0$. Then, we can define the $\cF_t$-martingale $Y(t)$ by $Y(t):= \EE(\tilde X | \cF_t)$, $0\le t\le T$ (which is sometimes called the Doob martingale associated to $\tilde X$). By the martingale representation theorem (see, e.g., \cite{RY}) we have that there is some predictable integrable process $D(t)$ such that
\[
Y(t)=\int_0^tD(s)\, dW(s).
\] 

Then, choosing
\[
C(t)=D(t)-\int_{\R^d\X\cK}\la \nu_{\om,t,x,y}, 
\Psi(t,x,y,\cdot)\ra w_N(x)\,dx\,d\mm(y),
\]
by virtue of \eqref{e.dW1} and \eqref{e.dW2}, we have that
\[
\EE\left(\abs{\tilde X - X}^2\right)
=\EE\left( (\tilde X - X)\int_0^TC(t)\, dW(t)\right)=0,
\]
which proves the claim.

Since this holds for any sub-subsequence of $\{X_k \}_k$ we have that the whole sequence converges to $X$ weakly in $L^2(\Om)$. In particular, given $A\in \cF$ we have that
\[
\lim_{k\to\infty}\EE\left( 1_A X_k \right) 
= \EE\left( 1_A X \right),
\]
which yields \eqref{e2.YM'}.

Moreover, to prove the  assertion about the predictability of 
\[
\int_{\R^d}\la \nu_{\om,t,x,y}, \Psi(t,x,y,\cdot)\ra w_N\,dx,
\]
for $\mm$-a.e.\ $y\in\cK$ we argue as follows. Since we are assuming that the separable ergodic algebra $\A(\R^d)$ is a subalgebra 
of $\APs(\R^d)$, we may as well assume that $\A(\R^d)$ contains the trigonometric functions $\sin\l\cdot y, \cos\l\cdot y$, for all $\l\in\R^d$ such that $\int_{\cK} g(y) e^{i \l\cdot y}\,d\mm(y)\ne 0$, for some $g\in\A(\R^d)$, which is sometimes called the spectrum of the algebra $\A(\R^d)$, which is a countable set; otherwise we can augment $\A(\R^n)$ to a separable ergodic
algebra containing such trigonometric functions.  In particular, it contains an almost periodic approximation of the unit, that is, a sequence of functions in $\AP(\R^d)$, $\{\rho_k(y)\,:\,k\in\N\}$, such that for all
$g\in\A(\R^d)$, $\rho_k* g(y)=\int_{\cK} \rho_k(y-z)g(z)\,d\mm(z)\to g_*(y)$ in $C(\cK)$, where $g_*$ is the almost periodic component of $g$, and so the convergence is a.e.\ in $\cK$ to $g$; $\rho_k$ may be taken as the Bochner-Fej\'er polynomials associated with the spectrum of the algebra $\A(\R^d)$ (see, e.g., \cite{Be}).  Since $C(\cK)$ is dense in $L^1(\cK)$,  $\rho_k* g(y)\to g(y)$ in $L^1(\cK)$ for all $g\in L^1(\cK)$. 
Now, from what was seen before, for each $y\in\cK$, 
 \[
(\om,t)\mapsto \int_{\R^d}\int_{\cK}\rho_k(y-z)\la \nu_{\om,t,x,z}, \Psi(t,x,z,\cdot)\ra w_N\,dx \,d\mm(z),
\]
is predictable, for all $k\in\N$.  

Let us fix $T>0$. Since $\F$ is countably generated, we can find a family $\{\psi_l\,:\,l\in\N\}$ in $L^\infty(\Om\X[0,T])$ dense in $L^2(\Om\X[0,T])$. Then using the bound for $\Psi$ and \eqref{eq:kinetic-Lp-bound-Young'}, we have that for all $l\in\N$
\begin{multline}\label{e2.100}
\lim_{k\to\infty} \int_{\cK} \rho_k(y-z)\left(\EE \int_0^T \psi_l(\om,t)\int_{\R^d} \la \nu_{\om,t,x,z}, \Psi(t,x,z,\cdot)\ra w_N\,dx\,dt\right)\,d\mm(z)\\
=\EE \int_0^T \psi_l(\om,t)\int_{\R^d} \la \nu_{\om,t,x,y}, \Psi(t,x,y,\cdot)\ra w_N\,dx\,dt
\end{multline}
in $L^1(\cK)$ and, after passing to a subsequence if necessary, the convergence is also a.e.\ in $\cK$. 

Let us now fix, $y\in\cK$ in the subset of full measure in $\cK$ for which \eqref{e2.100} holds for all $l\in\N$.  Using Jensen inequality, the bound for $\Psi$ and \eqref{eq:kinetic-Lp-bound-Young'}, we see that the functions 
$$
\g_k(\om,t):= \int_{\cK} \rho_k(y-z) \int_{\R^d} \la \nu_{\om,t,x,z}, \Psi(t,x,z,\cdot)\ra w_N\,dx\,d\mm(z),\quad k=1,2,\cdots,
$$
form a bounded sequence in $L^2(\Om\X[0,T])$. Then, given any subsequence of this sequence, we can find a further subsequence converging weakly in $L^2(\Om\X[0,T])$, and, because of \eqref{e2.100},  its weak limit in $L^2(\Om\X[0,T])$ 
must be
$$
\g(\om,t):= \int_{\R^d} \la \nu_{\om,t,x,y}, \Psi(t,x,y,\cdot)\ra w_N\,dx,
$$
therefore, the whole sequence $\g_k$ converges weakly to $\g(\om,t)$ in $L^2(\Om\X[0,T])$.  Now, since the $\g_k$'s are 
predictable and $L^2(\Om\X[0,T];\cP)$ is a closed subspace of $L^2(\Om\X[0,T])$, we deduce that $\g(\om,t)$ is also predictable,
for a.e.\ $y\in\cK$,
which concludes the proof.

\end{proof}

\begin{remark}\label{R:2.1} We remark that it follows from item (4) of Proposition~\ref{P:1.1} that given $F\in C(\mathbb{R})$ such that $|F(\xi)|\le C(1+|\xi|^p)$, for some $p\ge 1$, and letting $\overline{F}(\om,t,x,y):=\la \nu_{\om,t,x,y}, F\ra $, then for any $T>0$ we have that $F(u^{\ve_k})\rightharpoonup \overline{F(u)}$ in the weak topology of $L^2(\Om;L_\loc^2((0,T)\times\mathbb{R}^d))$, where 
\begin{equation}\label{e2.New}
\overline{F(u)}(\om,t,x)=\int_{\cK}\overline{F}(\om,t,x,y)d\mm(y).
\end{equation}
In particular, if $\nu_{\om,t,x,y}=\delta_{U(\om,t,x,y)}$, then  $\overline{F(u)} = \int_{\cK}F(U)d\mm(y)$.  

Indeed, it suffices to take $\tilde \Psi$ in \eqref{e2.YMom}  of the form $\Psi(\om, t,x,y,\xi)=\ell(\om)\psi(t,x)F(\xi)$ with $\ell\in L^2(\Om)$, 
$\psi\in C_c((0,T)\times\mathbb{R}^d)$ arbitrary to deduce \eqref{e2.New}, observing that, by  \eqref{e2.P1.1} and the assumption on $F$,  $F(u^{\ve_k})$ is bounded in $L^2(\Om;L^\infty((0,T)\times\mathbb{R}^d))$, therefore bounded in 
$L^2(\Om;L^2((0,T)\X\{|x|<R\}))$, for each $R>0$
 and the functions of the form $\ell(\om)\psi(t,x)$ with 
$\ell\in L^2(\Om)$ and $\psi\in C_c((0,T)\X\R^d)$ are dense in $L^2(\Om; L^2((0,T)\X\R^d))$. 

\end{remark}

The next result gives sufficient conditions for the existence of correctors for the weak convergence of the sequence $u^{\ve_k}$ established by Remark~\ref{R:2.1}. 

\begin{proposition}\label{P:1.2} Let $\nu_{\om,t,x,y}$ be the stochastic two-scale Young measure constructed in Proposition~\ref{P:1.1}. Assume $\nu_{\om,t,x,y}=\d_{U(\om,t,x,y)}$ for
\begin{enumerate}
\item[(a)] $U\in L^2(\Om; \A(\R^d; L^\infty((0,T)\X\R^d)))$, or 
\item[(b)] $U\in L^2(\Om; \B^2(\R^d; C_b([0,T]\X\R^d)))$.
\end{enumerate}
Then, $u^{\ve_k}-U\left(\om,t,x,\frac{x}{\ve_k}\right)\to 0$ strongly in  $L^2(\Om; L_\loc^2((0,T)\X\R^d))$.
\end{proposition}   

\begin{proof} First we observe that, because we only seek to show convergence in  
$L^2(\Om; L_\loc^2((0,T)\X\R^d))$, for item (b), we can just consider  $U\in L^2(\Om; \B^2(\R^d; C_c((0,T)\X\R^d)))$. Second, we see that the result would follow immediately from Proposition~\ref{P:1.1} if we were allowed to use 
\begin{equation}\label{eP12_0}
\tilde \Psi(\om,t,x,y,\xi)=|\xi-U(\om,t,x,y)|^2=\xi^2-2 \xi U(\om,t,x,y)+|U(\om,t,x,y)|^2,
\end{equation}
as a test function in \eqref{e2.YMom}. Let us check this possibility for each of the terms in the right-hand of the last equation in \eqref{eP12_0}. The first term, $\xi^2$, is good and, by Remark~\ref{R:2.1}, we have 
$$
\lim_{k\to\infty}\EE\int_{(0,T)\X\R^d}|u^{\ve_k}|^2 w_N\,dx\,dt=\EE\int_{(0,T)\X\R^d\X\cK}|U(\om,t,x,y)|^2w_N(x)\,d\mm(y)\,dx\,dt.
$$
Concerning the second term, $-2\xi U(\om,t,x,y)$, we observe first that if 
$$
U\in L^2(\Om; \A(\R^d;C_c((0,T)\X\R^d)))
$$ 
we could approximate it in 
$$
L^2(\Om; \A(\R^d;C_c((0,T)\X\R^d)))
$$ 
by finite linear combinations of functions of the form $1_A(\om)\psi(t,x,y)$ with $\psi\in
 \A(\R^d;\\ C_c((0,T)\X\R^d))$, $A\in\F$, and for such functions we could apply Proposition~\ref{P:1.1} to obtain
 \begin{multline}\label{eP12_1}
\lim_{k\to\infty}\EE\int_{(0,T)\X\R^d} u^{\ve_k} U(\om,t,x,\frac{x}{\ve_k}) w_N\,dx\,dt\\
=\EE\int_{(0,T)\X\R^d\X\cK}
|U(\om,t,x,y)|^2 w_N(x)\,d\mm(y)\,dx\,dt,
\end{multline}
so this equation holds for $U\in L^2(\Om; \A(\R^d;C_c((0,T)\X\R^d)))$.

Now, in case (a), for $U\in L^2(\Om;\A(\R^d; L^\infty((0,T)\X\R^d)))$, we can approximate $U$ in $L^2(\Om; \A(\R^d; L_\loc^2((0,T)\X\R^d)))$ by a sequence of functions in $L^2(\Om;\A(\R^d;C_c((0,T)\X\R^d)))$ to
obtain that \eqref{eP12_1} holds also for $U\in L^2(\Om;\A(\R^d; L^\infty((0,T)\X\R^d)))$.

In case (b), if $U\in L^2(\Om; \B^2(\R^d; C_c((0,T)\X\R^d)))$, we  can approximate $U$ in 
$$
L^2(\Om; \B^2(\R^d; C_c((0,T)\X\R^d)))
$$ 
by a sequence of functions in  $L^2(\Om; \A(\R^d; C_c((0,T)\X\R^d)))$ and for the latter we have already shown that equation~\eqref{eP12_1} holds, so it also holds $U\in L^2(\Om; \B^2(\R^d; C_c((0,T)\X\R^d)))$. 

Now, concerning the last term in the right-hand side of the last equation in \eqref{eP12_0}, it does not depend 
on $\xi$, so we just need to use the well known fact that, for a function  $\Psi\in L^2(\Om; \A(\R^d; L^\infty((0,T)\X\R^d)))$, in case (a), 
and  $\Psi\in L^2(\Om; \B^2(\R^d; C_c((0,T)\X\R^d)))$ in case (b), 
$$
\Psi(\om,t,x,\frac{x}{\ve_k})\wto \int_{\cK} \Psi(\om,t,x,y)\,d\mm(y),
$$
in the weak topology of $L^2(\Om;L_\loc^2((0,T)\X\R^d))$, and so we get
\begin{multline*}
\lim_{k\to\infty}\EE\int_{(0,T)\X\R^d} \left|U\left(\om,t,x,\frac{x}{\ve_k}\right)\right|^2 w_N\,dx\,dt\\=
\EE\int_{(0,T)\X\R^d\X\cK} |U(\om,t,x,y)|^2 w_N\,\,d\mm(y)\,dx\,dt.
\end{multline*}
Putting together the facts described above, we conclude that
\begin{equation*}
\lim_{k\to\infty}\EE\int_{(0,T)\X\R^d}\left|\, u^{\ve_k}-U\left(\om,t,x,\frac{x}{\ve_k}\right)\right|^2 w_N\,dx\,dt
=0,
\end{equation*}
which finishes the proof. 
\end{proof}

\section{Stochastic nonlinear transport, proof of Theorem \ref{T:1.2}}\label{S:2}
 
In this section we prove Theorem~\ref{T:1.2}. 
By assumption, we have that $h = \s'\s$. Set
\begin{equation}\label{e2.3}
\psi_\a(t)= g(\a+\k_0W(t)),
\end{equation}
where $g$ is a solution of the ODE 
$g'(\xi)=\s(g(\xi))$ and $\a\in\R$. 
We assert that $\psi_\a$ is a solution of 
equation \eqref{e2.1}, for any $\a\in\R$. Indeed, since 
$g''(\xi)=\s'(g(\xi))g'(\xi)=\s'(g(\xi))\s(g(\xi))=h(g(\xi))$, the 
assertion follows from the It\^{o} formula.
 
By the Stochastic Kru{\v {z}}kov 
inequality, cf. Proposition~\ref{p:Kato-Kruzkov}, a.s. we have
\begin{multline}\label{e2.4}
\int_0^\infty \int_{\R^d}\biggl\{ \abs{u^\ve-\psi_\a(t)}\phi_t
+\sgn(u^\ve-\psi_\a(t))\left(f(u^\ve)
-f(\psi_\a(t))\right)a\left(\frac{x}{\ve}\right)\cdot\nabla_x\phi  \\ 
+ \frac12 \k_0^2 \sgn(u^\ve-\psi_\a)
\left(h(u^\ve)-h(\psi_\a)\right)\phi \biggr\}\, dx\,dt 
+\int_{\R^d} \abs{U_0\left(x,\frac{x}{\ve}\right)-g(\a)}\phi\,dx\\
+\int_0^\infty \int_{\R^d} \k_0\sgn(u^\ve-\psi_\a)
\left(\s(u^\ve)-\s\left(\psi_\a\right)\right)\phi\,dx\,dW(t)\ge 0.
\end{multline}
A similar inequality holds with $(\cdot-\cdot)_+$ instead 
of $|\cdot-\cdot|$, which easily follows by adding 
to \eqref{e2.4} the difference 
of integral equations defining weak solutions 
for $u^\ve(t,x)$ and for $\psi_\a(t)$.  
{}From  \eqref{e2.4} we easily get 
the comparison principle 
\begin{multline*}
\EE \int_0^\infty \int_{\R^d}\biggl\{ 
\left(u^\ve-\psi_\a(t)\right)_+\phi_t
+\sgn(u^\ve-\psi_\a(t))_+\left(f(u^\ve)-f(\psi_\a(t))\right)
a\left(\frac{x}{\ve}\right)\cdot\nabla_x\phi  \\ 
+ \frac12\k_0^2\sgn(u^\ve-\psi_\a)_+ 
\left(h(u^\ve)-h(\psi_\a)\right)\phi \biggr\}\, dx\,dt \\
+\int_{\R^d} \left(U_0\left(x,\frac{x}{\ve}\right)
-g(\a)\right)_+\phi(0,x)\,dx\ge 0,
\end{multline*} 
which,  when 
$g(\a_1)\le U_0\left(x,\frac{x}{\ve}\right)\le g(\a_2)$, 
for some $\a_1,\a_2\in \R$,  implies a.s.\  the following uniform boundedness of 
the solutions of \eqref{e2.1}-\eqref{e2.2}
\begin{equation}\label{e2.4''}
\psi_{\a_1}(t)\le u^\ve(t,x)\le \psi_{\a_2}(t), \quad \text{for a.e.\ $(t,x)$}.
\end{equation}
 
We recall that it follows  from the definition 
of entropy solution (see Definition~\ref{D:2.1}), for  
any $C^2$ convex function $\eta:\R\to\R$, and 
$q$ satisfying $q'=\eta' f'$, $u^\ve$ satisfies
\begin{equation}\label{e2.5}
\begin{aligned}
\int_0^\infty\int_{\R^d}\biggl\{ \eta(u^\ve)\phi_t
+q(u^\ve) a\left(\frac{x}{\ve}\right)\cdot\nabla\phi 
+\frac12 \k_0^2\left(\eta'(u^\ve)h(u^\ve)
+ \eta''(u^\ve)\s(u^\ve)^2\right)\phi\biggr\}\,dxdt\\
\int_0^\infty\int_{\R^d} \k_0\eta'(u^\ve)\s(u^\ve)\phi\,dx\,dW(t)
+\int_{\R^d} \eta\left(U_0\left(x,\frac{x}{\ve}\right)\right)
\phi(0,x)\,dx\ge0. 
\end{aligned}
\end{equation}

Now, in equation \eqref{e2.5} we take $\phi(t,x)
=\ve\varphi\left(\frac{x}{\ve}\right)\zeta(t)\vartheta(x)$, 
where $0\le\varphi \in \A(\R^d)$, $\nabla\varphi\in\A(\R^d;\R^d)$, $0\le\zeta\in C_c^\infty([0,\infty))$
and $0\le \vartheta\in C_c^\infty(\R^{d})$, take conditional expectation with respect to an arbitrary $A\in\F$, and 
let $\ve\to0$, along a subsequence for which $u^\ve$ 
generates a two-scale Young measure $\nu_{\om, t,x,y}$, according to Proposition~\ref{P:1.1}, to obtain, since $A\in\F$
is arbitrary and we drop the $\om$ subscript from $\nu_{\om,t,x,y}$, a.s.,
\begin{equation}\label{e2.6}
\int_0^\infty \zeta(t)\int_{\cK}
\la \s_{t,y}^\vartheta, q(\cdot)\ra a(y)\cdot \nabla_y \varphi
\, d\mm(y)\, dt\ge 0,
\end{equation}
where
\begin{equation*}
\s_{t,y}^\vartheta:=\int_{\R^d} 
\vartheta(x)\nu_{t,x,y}\,dx.
\end{equation*}
By applying inequality \eqref{e2.6} to $C\pm\varphi$, 
with $C=\|\varphi\|_\infty$, and using the 
arbitrariness of $\varphi$,
\begin{equation} \label{e2.10}
y\mapsto\la \s_{t,y}^\vartheta,q(\cdot)\ra\in\S,
\quad\text{for a.e. }t\in (0,T).
\end{equation}
Now, for any $\eta\in C^2$, $C|u|^2+\eta(u)$ is convex 
for $C$ sufficiently large (depending on $\eta$), so \eqref{e2.10} holds for any $\eta\in C^2$ 
and, by approximation, for any Lipschitz continuous $\eta$. 
Now, if $f'\ne0$, given any $\eta\in C^1$, defining $\tilde \eta'=\eta'/f'$, the 
entropy-flux associated to $\tilde\eta$ is $\tilde q=\eta$, 
so that \eqref{e2.10} gives
\begin{equation}\label{e2.11}
y\mapsto\la \s_{t,y}^\vartheta,\eta(\cdot)\ra\in\S,
\qquad \text{for a.e. } t\in (0,T) \text{ and all $\eta\in C^1$}.
\end{equation}
In the more general case, where $\abs{\set{u\,:\, f'(u)=0}}=0$, we argue 
as in \cite{FS2} to deduce that \eqref{e2.11} still holds. Namely, for any open interval $I$ with $\bar I\subset \R\setminus E_0$, where 
$E_0=\set{u\,:\, f'(u)=0}$, we define $\eta_I'=\chi_I/f'$, 
where $\chi_I$ is the indicator function of the interval $I$, 
whose  corresponding entropy flux is $q_I$, with $q_I'=\chi_I$. 
Now, by approximation with convergence everywhere, the property may 
be extended to any open interval in $\R\setminus E_0$. Also, since the 
intersection of any open set with $\R\setminus E_0$ is a countable union 
of intervals in $\R\setminus E_0$, by approximation with convergence everywhere we get the property for any such intersection, and since $E_0$ has measure zero, the primitive of such intersection 
is equal to the primitive of the interval itself, 
so the property holds for $q_I$, where $I$ is any open interval, and 
hence for $q_I$ where $I$ is any interval. Since any $C^1$ 
function may be uniformly approximated 
by piecewise linear functions, which are linear 
combinations of $q_I$ functions, we 
deduce that \eqref{e2.11} also holds 
in this more general case.  
  
Now, we take $\phi(t,x)=\varphi\left(\frac{x}{\ve}\right)
\vartheta(t,x)$ in \eqref{e2.5}, 
where $0\le\varphi\in\S^\dag$ and 
$0\le\vartheta\in C_c^\infty(\R^{d+1})$, and take the conditional expectation with respect to an arbitrary $A\in\F$. 
Passing to the limit as $\ve\to0$ in \eqref{e2.5}, 
along a subsequence  which generates 
a two-scale Young measure according to Proposition~\ref{P:1.1},  as above, we get, a.s.,
\begin{multline} \label{e2.18}
\int_0^\infty\int_{\R^d}\int_{\cK} 
\biggl\{ \la \nu_{t,x,y}, \eta\ra \vartheta_t
+\la\nu_{t,x,y}, q\ra \tilde a(y)\cdot\nabla\vartheta 
+\frac12 \k_0^2\la \nu_{t,x,y}, (\eta' h 
+ \eta'' \s^2)\ra \vartheta \biggr\}\\
\times  \varphi(y)\,d\mm(y)\,dx\,dt\\
+\int_0^\infty\int_{\R^d}
\int_{\cK} \k_0\la \nu_{t,x,y}, 
\eta' \s\ra \vartheta\, \varphi(y)\,d\mm(y)\,dx\,dW(t)
\\+\int_{\R^d}\int_\cK \eta\left(U_0(x,y)\right)
\vartheta(0,x)\varphi(y)\,d\mm(y)\,dx\ge0. 
\end{multline}
Observe that instead of $a(y)$ we 
have, in \eqref{e2.18}, $\tilde a(y)$, the vector field whose 
components are the orthogonal projections  of the 
corresponding components of $a(y)$ onto $\S$, 
in $L^2(\cK)$, which is due to \eqref{e2.11}. 
Indeed, we use the fact that, for 
$g\in \left(\S\cap L^\infty\right)(\cK)$ and $r\in L^2(\cK)$, the 
orthogonal projection of $gr$ onto $\S$, $\widetilde{gr}$, 
is equal to $g\tilde r$, where $\tilde r$ is the 
orthogonal projection of $r$ 
onto $\S$ (see Proposition~4.2 in \cite{FS2}). 
Since we assume that $\S^\dag$ is dense in $\S$,  
we can extend \eqref{e2.18}  from $0\le \varphi\in\S$ 
to all  $0\le \varphi \in L^2(\cK)$, 
where we also use condition \eqref{e2.7'} 
on the initial data $U_0(x,y)$. 
Therefore, for $\bbP$-a.e.\ $\om\in\Om$ and $\mm$-a.e.\ $y\in\cK$, we have, for all 
$\vartheta\in C_c^\infty(\R^{d+1})$, 
\begin{multline} \label{e2.18'}
\int_0^\infty\int_{\R^d} \biggl\{ \la \nu_{t,x,y}, \eta\ra \vartheta_t
+\la\nu_{t,x,y}, q\ra \tilde a(y)\cdot\nabla\vartheta 
+\frac12 \k_0^2\la \nu_{t,x,y}, (\eta' h + \eta'' \s^2)\ra \vartheta \biggr\}\,dx\,dt\\
+ \int_0^\infty\int_{\R^d} \k_0\la \nu_{t,x,y}, 
\eta' \s\ra \vartheta \,dx\,dW(t)
\\+\int_{\R^d} \eta\left(U_0(x,y)\right)
\vartheta(0,x)\,dx\ge0. 
\end{multline}
Now for a convex $\eta\in C^3(\R)$, 
such that $\eta''\in C_c^1(\R)$, we have 
the obvious formulas
\begin{equation}\label{e2.kf}
\begin{aligned}
&\eta(\cdot)=\int_\R \eta'(\xi) 
\En_{(-\infty,\,\cdot)}(\xi)\,d\xi,\\
&q(\cdot)=\int_{\R} f'(\xi)\eta'(\xi) 
\En_{(-\infty,\,\cdot)}(\xi)\,d\xi,\\
& \left(\eta' h + \eta'' \s^2\right)(\cdot)
=\int_{\R} \left(\eta'h'+\eta''(h+2\s\s')
+\eta'''\s^2\right) (\xi)
\En_{(-\infty,\,\cdot)}(\xi)\,d\xi,\\
&\left(\eta'\s\right)(\cdot)
=\int_{\R} \left(\eta'\s'
+\eta''\s\right)(\xi) 
\En_{(-\infty,\,\cdot)}(\xi)\,d\xi,\\
&\eta\left(U_0(x,y)\right)
=\int_{\R} \eta'(\xi)\En_{(-\infty,U_0(x,y))}(\xi)\,d\xi. 
\end{aligned}
\end{equation}
Therefore, for a fixed $y\in\cK$, 
setting $\rho_1(t,x,\xi)=\nu_{t,x,y}((\xi,+\infty))$, 
we get from \eqref{e2.18'}
\begin{multline} \label{e2.18''}
\int_0^\infty\int_{\R^d}\int_{\R} 
\biggl\{ \rho_1(t,x,\xi)\eta'(\xi) \vartheta_t
+\rho_1(t,x,\xi) f'(\xi) \eta'(\xi) \tilde a(y)
\cdot\nabla\vartheta \biggr.\\
\biggl. +\frac12 \k_0^2 \rho_1(t,x,\xi) 
\left(\eta' h' + \eta'' (h+2\s\s')+\eta'''\s^2\right) (\xi) 
\vartheta \biggr\}\,d\xi\,dx\,dt\\
+ \int_0^\infty\int_{\R^d}\int_{\R} 
\k_0\rho_1(t,x,\xi) 
\left(\eta' \s'+\eta''\s\right)(\xi) 
\vartheta\,d\xi \,dx\,dW(t)
\\+\int_{\R^d}\int_\R \eta'(\xi) 
\En_{(-\infty,U_0(x,y))}(\xi)
\vartheta(0,x)\,d\xi\,dx\ge0. 
\end{multline}
Thus, seeing the left-hand side of the inequality above 
as a  distribution applied to $\eta''(\xi)\vartheta(t,x)$, 
we conclude that it is indeed a measure, which we denote 
by $m_1(t,x,\xi)$.  We can then extend the identity defining $m_1$ 
to any $\eta$ of the form $\eta(\xi)=\int_{-\infty}^\xi \z(s)\,ds$, 
for some $\z\in C_c^\infty (\R)$. This way we deduce 
that $\rho_1$ is a weak solution of the following kinetic equation
\begin{equation}\label{e2.19}
\frac{\po\rho_1}{\po t}
+ f'(\xi)\tilde a(y)\cdot\nabla \rho_1
+\frac12\k_0^2 h(\xi)\po_{\xi}\rho_1
-\frac12\k_0^2\po_{\xi}(\s^2\po_\xi \rho_1)
=\po_\xi m_1-\k_0\s\po_\xi \rho_1\frac{dW(t)}{dt},
\end{equation} 
and we see from \eqref{e2.18''} that $\rho_1$ verifies 
\begin{equation}\label{e2.20}
\underset{t\to0+}{\esslim}\int_{\R^{d+1}} 
\rho_1(t,x,\xi) \z(\xi)\phi(x)\,d\xi\,dx=\int_{\R^{d+1}} 
\En_{(-\infty,U_0(x,y))}(\xi)\z(\xi)\phi(x)\,d\xi\,dx,
\end{equation}
for all $\z\in C_c^\infty(\R)$ and 
all $\phi\in C_c^\infty(\R^d)$, see also 
Remark \ref{rem:init-measure}.

Considering the definition of the 
measure $m_1(t,x,\xi)$ from \eqref{e2.18''}, 
we may check that $m_1$ satisfies the 
conditions of a kinetic measure in 
Definition~\ref{def:kinetic-measure}. Also, $\rho_1(t,x,\xi)$ 
is a generalized kinetic function whose 
associated Young measure, $\nu_{t,x,y}$, 
satisfies \eqref{eq:kinetic-Lp-bound-Young}, which 
can be verified without difficulty 
using the bounds \eqref{e2.4''}.   

\medskip
   
Now, let  $U(t,x,y)$ be the entropy solution 
of \eqref{e2.22}-\eqref{e2.23}. 
{}According to Definition~\ref{D:2.2}, 
recalling \eqref{e2.25_0}, for each $y\in\cK$, 
for any convex $\eta\in C^2(\R)$, 
and $0\le \varphi\in C_c^\infty(\R^{d+1})$, 
 \begin{multline*}
\int_Q \eta(U(y))\po_t\varphi 
+q(U(y)) \tilde a(y)\cdot\nabla\varphi 
+\frac{\k_0^2}2 \left( \eta'(U(y)) h(U(y)) 
+\eta''(U(y))  \s^2( U(y)) \right) \varphi\,dx\,dt
\\ + \k_0\int_0^T\int_{\R^d}\eta'(U(y))
\s(U(y))\varphi\,dx\,dW(t) 
+\int_{\R^d}\eta(U_0(y))\varphi(0,x)\,dx\,dt\ge 0.
\end{multline*}
Setting $\rho_2(t,x,\xi)=1_{(-\infty,U(t,x,y))}(\xi)$, 
using the formulas \eqref{e2.kf}, we get from \eqref{e2.25_0}
\begin{multline} \label{e2.21}
\int_0^\infty\int_{\R^d}\int_{\R} 
\biggl\{ \rho_2(t,x,\xi)\eta'(\xi) \varphi_t
+\rho_2(t,x,\xi) f'(\xi) \eta'(\xi) 
\tilde a(y)\cdot\nabla\varphi \biggr.\\
\biggl. +\frac12 \k_0^2 \rho_2(t,x,\xi) \left(\eta' h'(\xi) + \eta'' (h+2\s\s')(\xi)+\eta'''\s^2(\xi)\right) \varphi \biggr\}\,d\xi\,dx\,dt\\
+ \int_0^\infty\int_{\R^d}\int_{\R} \k_0\rho_2(t,x,\xi) 
\left(\eta' \s'(\xi)+\eta''\s(\xi)\right) \varphi\,d\xi \,dx\,dW(t)
\\
+\int_{\R^d}\int_\R \eta'(\xi) \En_{(-\infty,U_0(x,y))}(\xi)
\varphi(0,x)\,d\xi\,dx\ge0. 
\end{multline}
Thus, again, the left-hand side of \eqref{e2.21} 
defines a measure $m_2(t,x,\xi)$ 
applied to $\varphi\eta''$.  Therefore, as above, 
we see that $\rho_2$ is a weak solution of the kinetic equation 
\begin{equation}\label{e2.22'}
\frac{\po\rho_2}{\po t}+ f'(\xi)\tilde a(y)\cdot\nabla 
\rho_2+\frac12\k_0^2 h(\xi)\po_{\xi}\rho_2
-\frac12\k_0^2\po_{\xi}(\s^2\po_\xi \rho_2)=\po_\xi m_2
-\k_0\s\po_\xi \rho_2\frac{dW(t)}{dt},
\end{equation} 
and we see from \eqref{e2.21} that $\rho_2$ verifies 
\begin{equation}\label{e2.23'}
\underset{t\to0+}{\esslim}
\int_{\R^{d+1}} \rho_2(t,x,\xi) \z(\xi)\phi(x)\,d\xi\,dx
=\int_{\R^{d+1}} 
\En_{(-\infty,U_0(x,y))}(\xi)
\z(\xi)\phi(x)\,d\xi\,dx,
\end{equation}
for all $\z\in C_c^\infty(\R)$ 
and all $\phi\in C_c^\infty(\R^d)$. Since $\rho_2$ is a standard kinetic function, a well known argument shows that the convergence in \eqref{e2.23'} may be strengthen to a strong convergence in $L^1(\R^d;w_N)$ (see also 
Remark~\ref{rem:init-measure}).

Again, by the definition of the measure $m_2(t,x,\xi)$ 
from \eqref{e2.21}, we may check that $m_2$ 
satisfies the conditions of a kinetic measure in Definition~\ref{def:kinetic-measure}. 
Also, $\rho_2(t,x,\xi)$ is trivially a 
generalized kinetic function whose associated 
Young measure  $\d_{U(t,x,y)}$, 
satisfies \eqref{eq:kinetic-Lp-bound-Young}, 
which can be verified without difficulty 
using the bounds \eqref{e2.4''}.   

Due to \eqref{e2.19}-\eqref{e2.20} 
and \eqref{e2.22'}-\eqref{e2.23'} 
and the properties satisfied by $m_1$, $m_2$ 
and $\mu_{t,x}^1=\nu_{t,x,y}$, 
$\mu_{t,x}^2=\d_{U(t,x,y)}$ (see, in particular, Proposition~\ref{P:1.1}(3)), we can apply 
Proposition~\ref{prop:scl-rigidity} 
together with the well-posedness result 
in Theorem~\ref{thm:scl-L1stab} (see 
also \eqref{eq:scl-varrho_1(1-varrho_2)}
and discussion at the beginning 
of Section \ref{S:5}) to deduce that  
\begin{multline*}
\EE\int_{\R^d}\int_{\R} 
\rho_1(t,x,\xi)\left(1-\rho_2(t,x,\xi)\right)
w_N\,dx\,d\xi \\ \le C(T) \int_{\R^d}
\int_{\R} \rho_1(0,x,\xi)
\left(1-\rho_2(0,x,\xi)\right)w_N\,dx\,d\xi=0,
\end{multline*}
for $0<t\le T$, $w_N$ given by \eqref{eq:scl-weight-def}, 
and so $\rho_1(t,x,\xi)=\rho_2(t,x,\xi)$, 
a.e.\ in $\Om\X(0,\infty)\X\R^d\X\R$. 
Clearly, this implies that 
\begin{equation*}
 \nu_{t,x,y}= \d_{U(t,x,y)},
\end{equation*} 
a.e.\ in $\Om\X(0,\infty)\X\R^d\X\cK$. 
In particular, due to the uniqueness of the limit, 
we deduce that the whole sequence $u^\ve(t,x)$ satisfies
\begin{equation*}
u^\ve\wto u(t,x):=\int_{\cK}U(t,x,y)\,d\mm(y), 
\end{equation*}
in the weak topology of $L^2(\Om;L_\loc^2((0,T)\X\R^d))$, for each $T>0$. Indeed, if this is not the case, then there would be a sequence $\ve_j\to 0$, a test function $\psi\in L^2(\Om;L^2((0,T)\times \mathbb{R}^d))$ and a constant $\alpha>0$  such that 
\begin{equation}\label{e3.1000}
\left| \EE \int_Q u^{\ve_j}\, \psi dx\, dt - \EE \int_Q u\, \psi dx\, dt\right|>\alpha,\quad \text{for all $j\in\N$.}
\end{equation}
However, by the procedure above, there is a further subsequence $\ve_{j_k}$ for which $u^{\ve_{j_k}}$ generates a Young measure $\nu_{\om,t,x,y}$ that turns out to be equal to $\delta_{U(\om,t,x,y)}$, which by Remark~\ref{R:2.1}, contradicts \eqref{e3.1000}. 

Finally, concerning the last assertion in Theorem~\ref{T:1.2}, it follow  directly from Proposition~\ref{P:1.2}~(b). This concludes the proof of Theorem \ref{T:1.2}.

\section{Stiff oscillatory external force, 
proof of Theorem \ref{T:1.1}}\label{S:3}

In this section we prove Theorem~\ref{T:1.1}.  
For the convenience of the reader, we rewrite 
here the formulas related to the homogenization 
problem  for \eqref{e1.1}, beginning with \eqref{e1.1} itself
\begin{equation*}
du^\ve+ \nabla_{x}\cdot f(u^\ve)\,dt
=\frac1{\ve}V'\left(\frac{x_1}{\ve}\right)\,dt
+ \k_0\,\s_{f_1}(u^\ve)\,dW+ \frac12\k_0^2\,h_{f_1}(u^\ve)\,dt,
\end{equation*}
where $f=(f_1,\ldots,f_d)$, $f_i:\R\to\R$ are smooth functions, 
$i=1,\ldots,d$, $f_1'\ge \delta_0>0$, $f_k'\ge 0$, 
$k=2,\ldots,d$,  We also assume that
$f'\in L^\infty(\R;\R^d)$ and $f_1',f_1'',f_1'''\in L^\infty(\R)$.
$\k_0\in\R$ is a constant. 
$V:\R\to\R$ is a smooth 
function belonging to an arbitrary ergodic 
algebra $\A(\R^d)$, 
$\s_{f_1}$, $h_{f_1}$ are 
obtained from $f_1$ from the expressions
$$
\s_{f_1}(u):= \frac1{f_1'(u)},
\qquad h_{f_1}:=-\frac{f_1''(u)}{f_1'(u)^3}.
$$  
We observe that, from the assumptions 
on $f_1$,  it follows that $h_{f_1}'\in L^\infty(\R)$.

We recall that  $g=f_1^{-1}$ is the inverse of $f_1$. 
We assume that, for some $v_0\in L^\infty(\R^d)$, 
the initial data $u_0\left(x,\frac{x}{\ve}\right)$ 
in \eqref{e1.2} satisfy
\begin{equation*}
	u_0(x,y)=g(V(y)+v_0(x)).
\end{equation*}

We recall the auxiliary equation \eqref{e1.4}
\begin{equation*}
	d\bar u+\nabla\cdot \bar f(\bar u)\,dt
	=\k_0\,\s_{\bar f_1}(\bar u)\,dW 
	+ \frac12 \k_0^2\,h_{\bar f_1}( \bar u)\,dt,
\end{equation*}
where $\bar f=(\bar f_1,\bar f_2,\ldots,\bar f_d)$, with 
$\bar f_1,\bar f_2,\ldots, \bar f_d$, 
satisfying \eqref{e1.5}, \eqref{e1.6} which we recall here
\begin{align*}
p &=\Medint_{\R}g\left(\bar f_1(p)+V(z_1)\right)\,dz_1, \\
\bar f_k(p)&=\Medint_{\R}f_k\circ 
g\left(\bar f_1(p)+V(z_1)\right)\,dz_1,  
\quad k=2,\ldots, d, 
\end{align*}
and $\s_{\bar f_1}(\cdot), \ h_{\bar f_1}(\cdot)$ 
are defined as $\s_{f_1},\ h_{f_1}$ 
with $\bar f_1(\cdot)$ instead of $f_1$
We recall that, from the assumptions on $f$ 
and $f_1$, it follows from \eqref{e1.5} 
and \eqref{e1.6} that $\bar f$ and 
$\bar f_1$ also satisfy $\bar f'\in L^\infty(\R;\R^d)$ 
and $\bar f_1',\bar f_1'',\bar f_1'''\in L^\infty(\R)$.

We recall that for \eqref{e1.4} we have 
prescribed the initial condition 
\begin{equation*}
	\bar u(0,x)=\bar u_0(x):=
	\Medint_{\R}u_0(x,z_1)\,dz_1=\bar f_1^{-1}(v_0(x)).
\end{equation*} 

We begin the proof of Theorem~\ref{T:1.1} 
by observing that \eqref{e1.1} 
admits special solutions of the form
\begin{equation}\label{e1.3}
\psi_\a\left(t,\frac{x_1}{\ve}\right):=
g\left(V\left(\frac{x_1}{\ve}\right)+\k_0 W(t)+\a\right),
\end{equation}
where  $\a\in\R$, as a consequence of It\^o's formula.  

The equation \eqref{e1.4} has the following special solutions
\begin{equation}\label{e1.7}
\psi_{*\g}(t):= \bar g\left(\g+\k_0 W(t)\right),
\end{equation}
where $\bar g(\cdot):=\bar f_1^{-1}(\cdot)$, the inverse 
function of $\bar f_1(\cdot)$, that is,
$$
\psi_{*\g}(t)=
\Medint_{\R}g\left(\g+\k_0 W(t)+V(z_1)\right)\,dz_1.
$$
 
By the stochastic Kru{\v {z}}kov 
inequality, cf. Proposition~\ref{p:Kato-Kruzkov}, we get a.s.
\begin{multline}\label{e1.8} 
 \int_0^\infty\int_{\R^d}
\biggl\{ \abs{u^\ve-\psi_\a\left(t,\frac{x_1}{\ve}\right)}
\phi_t+\abs{f_1(u^\ve)
-f_1\left(\psi_\a\left(t,\frac{x_1}{\ve}\right)\right)} 
\phi_{x_1} \biggr. \\
\biggl. 
+\sum_{k=2}^d \abs{f_k(u^\ve)
-f_k\left(\psi_\a\left(t,\frac{x_1}{\ve}\right)\right)}
\phi_{x_k} +\frac12\k_0^2 S_{u^\ve,\psi_\a^\ve} \biggl(h_{f_1}(u^\ve)
-h_{f_1}\left(\psi_\a\left(t,\frac{x_1}{\ve}\right)\right)\biggr)
\phi\biggr\}\,dx\,dt \\
+\int_0^\infty \int_{\R^d} \k_0 \abs{\s_{f_1}(u^\ve)-\s_{f_1}\left(\psi_\a\left(t,\frac{x_1}{\ve}\right)\right)}
\phi\,dx\,dW(t) \\
+ \int_{\R^d} \abs{u_0^\ve-\psi_\a^\ve}\phi(0,x)\,dx\ge 0,
\end{multline}
where, $\psi_\a^\ve:=\psi_\a\left(t,\frac{x_1}{\ve}\right)$, 
$S_{a,b}=\sgn(a-b)$, as in the last section, and we use the fact that 
$f_1, f_2,\cdots, f_d, \s_{f_1}$ are monotone increasing. 
A similar inequality holds with $(\cdot-\cdot)_\pm$ instead 
of $|\cdot-\cdot|$, which easily follows by adding (subtracting)
to \eqref{e1.8}  the difference 
of integral equations defining weak solutions 
for $u^\ve(t,x)$ and for $\psi_\a\left(t,\frac{x_1}{\ve}\right)$.  
Let $w_N$ be defined as in \eqref{eq:scl-weight-def}.  
In particular, from \eqref{e1.8} it follows 
the comparison principle
\begin{equation*}
\EE \int_{\R^d} 
\left(u^\ve(t,x)-\psi_\a(t,\frac{x_1}\ve)\right)_\pm w_N
\,dx\le e^{Ct}\int_{\R^d} 
\left(u_0(x, \frac{x_1}{\ve})-\psi_{\a}
\left(0,\frac{x_1}{\ve}\right)\right)_\pm w_N\,dx,
\end{equation*}
for some $C>0$. 

Thus, if $\a_1,\a_2\in\R$ are such that 
$$
\psi_{\a_1}\left(0,\frac{x_1}{\ve}\right)
\le u_0(x,\frac{x_1}{\ve}) 
\le \psi_{\a_2}\left(0,\frac{x_1}{\ve}\right),
$$
we obtain the following, which provide bounds 
for $u^\ve$ independent of $\ve$: 
\begin{equation}\label{e1.8''}
\psi_{\a_1}\left(t,\frac{x_1}{\ve}\right)\le u^\ve(t, x) 
\le\psi_{\a_2}\left(t,\frac{x_1}{\ve}\right).
\end{equation}
 
Taking, in \eqref{e1.8}, $\phi(t,x)= 
\ve \varphi\left(\frac{x_1}{\ve}\right)\psi(t,x)$, 
where $\varphi, \varphi' \in \cA(\R)$, $\varphi\ge0$ 
and $0\le \psi\in C_c^\infty((0,\infty)\X \R^d )$, taking conditional expectation with respect to an arbitrary $A\in\F$, 
and letting $\ve\to0$, along a subsequence 
for which $u^\ve$ generates a two-scale 
Young measure $\nu_{\om,t,x,y}$ (see Proposition~\ref{P:1.1}), we get, a.s.,  where we again drop the subscript $\om$ from $\nu_{\om,t,x,y}$,
$$
\int_0^\infty \int_{\R^d} \int_{\cK}\psi(t,x)\la \nu_{t,x,y}, 
\abs{f_1(\l)-f_1(\psi_\a(t,y))}
\ra\,\varphi'(y)\,d\mm(y)\,dx\,dt\ge 0,
$$
where $\cK$ denotes the compactification of $\R^d$ 
generated by $\A(\R^d)$, whose invariant measure 
associated with the mean-value is denoted  by $d\mm(y)$.
 
Applying this inequality to $C\pm \varphi$, 
with $C=\|\varphi\|_\infty$, we obtain, a.s., 
\begin{equation}\label{e1.9}
 \int_0^\infty\int_{\R^d}\int_{\cK} 
\psi(t,x)\la \nu_{t,x,y},
\abs{f_1(\l)-f_1(\psi_\a(t,y))}
\ra\varphi'(y)\, d\mm(y)\,dx\,dt=0.
\end{equation}
 
We define, similarly to \cite{ES, AF}, the family 
of parameterized measures $\mu_{t,x,y}$ over $\R$ by
$$
\la \mu_{t,x,y},\theta\ra:= 
\la \nu_{t,x,y},\theta(f_1(\l)-\k_0 W(t)-V(y)) \ra,
\quad \text{for $\theta\in C_c(\R)$}.
$$
We see from \eqref{e1.9}  that $\mu_{t,x,y}$ 
actually does not depend on $y\in\cK$, since
\begin{equation}\label{e1.10}
\int_0^\infty\int_{\R^d}\int_{\cK} \psi(t,x)\la 
 \mu_{t,x,y},\theta\ra\varphi'(y)\, d\mm(y)\,dx\,dt=0,
\end{equation}
for all $\theta$ of the form $|\cdot-\a|$, $\a\in\R$, 
and, from the remark made just after \eqref{e1.8}, 
also for $\theta(\cdot)=(\cdot -\a)_\pm$, $\a\in\R$, 
which implies that \eqref{e1.10} holds for all $\theta\in C(\R)$.  

Now, taking any nonnegative $\phi\in C_c^1(\R^{d+1})$ 
in \eqref{e1.8}, taking conditional expectation with respect to an arbitrary $A\in\F$, and making $\ve\to0$ along a subsequence as above, given by Proposition~\ref{P:1.1}, we get a.s. 
\begin{multline}\label{e1.11} 
 \int_0^\infty\int_{\R^d}\int_{\cK}
\biggl\{\la\nu_{t,x,y},  \abs{\l-\psi_\a(t,y)}\ra\phi_t  
+\la\nu_{t,x,y}, \abs{f_1(\l)-f_1(\psi_\a(t,y) )}\ra\phi_{x_1} \biggr. 
\\ \biggl.  
+\sum_{k=2}^d \la\nu_{t,x,y}, \abs{f^k(\l)-f^k(\psi_\a(t,y))}\ra \phi_{x_k} 
+\frac12 \k_0^2 \la\nu_{t,x,y}, S_{\l,\psi_\a}\left(h_{f_1}(\l)-h_{f_1}(\psi_\a(t,y))\right)\ra \phi\biggr\}\\
\X\,d\mm(y)\, dx\,dt \\ 
+\int_0^\infty \int_{\R^d}\int_{\cK} \k_0 \la\nu_{t,x,y},\left|\s_{f_1}(\l)-\s_{f_1}(\psi_\a(t,y))\right|\ra\phi\,d\mm(y)\,dx\,dW(t) \\
+ \int_{\R^d}\int_{\cK} 
\abs{u_0(x,y)-\psi_\a(0,y)}\phi(0,x)\,d\mm(y)\, dx\ge 0.
\end{multline}
 
Due to \eqref{e1.10} we can write $\mu_{t,x,y} =\mu_{t,x}$. 
Then, using the substitution formulas 
$\l=g(\rho+\k_0 W(t)+V(y))$, 
$\psi_\a(t,y)=g\left(\a+\k_0 W(t)+V(y)\right)$, 
we can rewrite \eqref{e1.11} a.s.\ as 
\begin{multline*}
 \int_0^\infty\int_{\R^d}\biggl\{\la\mu_{t,x},  
\int_{\cK}\abs{g\left(\cdot+\k_0 W(t)+V(y)\right)
-g\left(\a+\k_0 W(t)+V(y)\right)}\,\mm(y)\ra\phi_t \biggr.
\\  +\sum_{k=1}^d \la\mu_{t,x}, 
\int_{\cK}\abs{p_k(\cdot +\k_0 W(t)+V(y))
-p_k(\a+\k_0W(t)+V(y))}\,d\mm(y)\ra \phi_{x_k} 
\\ \biggl. 
+ \frac12\k_0^2\la\mu_{t,x},
\int_{\cK} S_{\cdot,\a} \left(h_{f_1}\circ 
g\left(\cdot+\k_0W(t)+V(y)\right)
-h_{f_1}\circ g\left(\a+\k_0 W(t)+V(y)\right)\right)\,d\mm(y)
\ra \phi\biggr\} \, dx\,dt \\
+\int_0^\infty \int_{\R^d} \k_0 \Big\langle \mu_{t,x,y},\int_{\cK}  
\left|\s_{f_1}\circ g(\cdot+\k_0W(t)+V(y))-\s_{f_1}
\circ g(\a+\k_0 W(t)+V(y))\right|\,d\mm(y) \Big\rangle\\ 
\X \phi \,dx\,dW(t) \\
+\int_{\R^d}\int_{\cK} 
\abs{u_0(x,y)-g(\a+V(y))}\phi(0,x)\,d\mm(y)\, dx\ge 0,
\end{multline*}
where $p_k=f_k\circ g$ (so $p_1(t)=t$), 
$S_{a,b}=\sgn(a-b)=\sgn(g(\cdot+\k_0 W(t)+V(y))
-g(\a+\k_0W(t)+V(y)))$,  from which it follows
\begin{multline}\label{e1.12'}
 \int_0^\infty\int_{\R^d}\biggl\{\la\mu_{t,x},  
\int_{\cK} \left(g\left(\cdot+\k_0 W(t)+V(y)\right)
-g\left(\a+\k_0 W(t)+V(y)\right)\right)_+
\,d\mm(y) \ra\phi_t \biggr.
\\ +\sum_{k=1}^d \la\mu_{t,x}, 
\int_{\cK}\left(p_k(\cdot +\k_0 W(t)+V(y))
-p_k(\a+\k_0W(t)+V(y))\right)_+d\mm(y)\ra \phi_{x_k} 
\\ \biggl. 
+ \frac12\k_0^2\la\mu_{t,x},
\int_{\cK}S_{\cdot,\a,+} \left(h_{f_1}
\circ g\left(\cdot+\k_0W(t)+V(y)\right)
-h_{f_1}\circ g\left(\a+\k_0 W(t)+V(y)\right)\right)\, d\mm(y)
\ra \phi\biggr\} \, dx\,dt \\
+\int_0^\infty \int_{\R^d} \k_0 \Big\langle \mu_{t,x},\int_{\cK}  
 \left(\s_{f_1}\circ g(\cdot+\k_0W(t)+V(y))
 -\s_{f_1}\circ g(\a+\k_0 W(t)+V(y))\right)_+
 \,d\mm(y)\Big\rangle\\ \X \phi\,dx\,dW(t) \\
+ \int_{\R^d}\int_{\cK} 
\left(u_0(x,y)-g(\a+V(y))\right)_+\,d\mm(y) \phi(0,x)\, dx\ge 0,
\end{multline}
where $S_{a,b,+}:=(a-b)_+
=\sgn_+(g(a+\k_0 W(t)+V(y))-g(b+\k_0W(t)+V(y)))_+$. 

Note that from the formulas for $\s_{\bar f_1}$ and $h_{\bar f_1}$, recalled in the beginning of this section, we may
verify, and this seems a little miraculous(!), 
the equations
\begin{equation*}
\begin{aligned}
&\s_{\bar f_1}\circ \bar g(v)=\int_{\cK} 
\s_{f_1}\circ g(v+V(y))\,d\mm(y),\\
& h_{\bar f_1}\circ \bar g(v)=\int_{\cK} 
h_{f_1}\circ g(v+V(y))\, d\mm(y).
\end{aligned}
\end{equation*}

Also, by the monotonicity of $g$, $p_k$, $k=2,\ldots,d$ 
and $\s_{f_1}$ we can pass the integral 
over $\mathcal{K}$ inside of the positive part 
in each term of \eqref{e1.12'}. Thus, recalling 
the definition of $\bar f_k$, $k=1,\ldots,d$ 
and $\psi_{*\a}$, we obtain
\begin{multline*}
 \int_0^\infty\int_{\R^d}\biggl\{\la\mu_{t,x},  
\left(\bar g\left(\cdot+\k_0 W(t)\right)
-\bar g(\alpha+\k_0W(t))\right)_+ \ra\phi_t \biggr.
\\ +\sum_{k=1}^d \la\mu_{t,x}, 
\left(\bar f_k\circ\bar g(\cdot +\k_0 W(t))
-\bar f_k\circ\bar g(\a + \k_0W(t))\right)_+ \ra \phi_{x_k} 
\\ \biggl. +\la\mu_{t,x},
\frac{1}{2} \k_0^2 S_{\cdot,\a,+} \left(h_{\bar f_1}\circ \bar g\left(\cdot+\k_0W(t)\right)
- h_{\bar f_1}\circ\bar g\left(\a+\k_0 W(t)\right)\right)\, 
\ra \phi\biggr\} \, dx\,dt \\
+\int_0^\infty \int_{\R^d} \k_0 \Big\langle \mu_{t,x}, 
\left(\s_{\bar f_1}\circ 
\bar g(\cdot+\k_0W(t))-\s_{\bar f_1}\circ 
\bar g(\a+\k_0 W(t))\right)_+\,\Big\rangle \phi\,dx\,dW(t) \\
+\int_{\R^d}\left(\bar g(v_0(x))-\bar g(\a)\right)_+
\, \phi(0,x)\, dx\ge 0.
\end{multline*}

Given $\vartheta \in C_c^\infty(\R^{d+1})$ 
and $\tilde \varphi\in C_c^\infty(\R)$, we define the measure 
$m_1=m_1(t,x,\xi)$ applied to $\vartheta \tilde \varphi$ by
\begin{multline}\label{e1.12'''}
\la m_1,\vartheta \tilde\varphi\ra 
:= \int_0^\infty\int_{\R^d}\int_\R \biggl\{\la\mu_{t,x},  
\left(\bar g\left(\cdot+\k_0 W(t)\right)
-\bar g(\xi+\k_0W(t))\right)_+ \ra\vartheta_t  \\
 +\sum_{k=1}^d \la\mu_{t,x}, 
\left(\bar f_k\circ\bar g(\cdot +\k_0 W(t))
-\bar f_k\circ\bar g(\xi + \k_0W(t))\right)_+ \ra \vartheta_{x_k} 
\\ 
\biggl.
 + \la\mu_{t,x},
\frac{1}{2} \k_0^2 S_{\cdot,\a,+} \left(h_{\bar f_1}\circ \bar g\left(\cdot+\k_0W(t)\right)
- h_{\bar f_1}\circ\bar g\left(\xi+\k_0 W(t)\right)\right)\, 
\ra \vartheta\biggr\} \tilde\varphi(\xi)\,d\xi\, dx\,dt \biggr. \\
+\int_0^\infty \int_{\R^d}\int_\R \k_0 \Big\langle \mu_{t,x}, 
 \left(\s_{\bar f_1}\circ \bar g(\cdot+\k_0W(t))-\s_{\bar f_1}\circ \bar g(\xi+\k_0 W(t))\right)_+\,\Big\rangle  \vartheta \tilde\varphi(\xi) 
   \,d\xi\,dx\,dW(t) \\
+ \int_{\R^d}\int_\R \left(\bar g(v_0(x))-\bar g(\xi)\right)_+ \vartheta(0,x)\tilde\varphi(\xi)\,d\xi\, dx.
\end{multline}
We then take $\tilde \varphi=\varphi'$, 
for some $\varphi\in C_c^\infty(\R)$ 
and make an integration by parts in the integral in $\xi$.  
Hence, defining $\rho_1(t,x,\xi):=\mu_{t,x}((\xi,+\infty))$, 
$a_0(\xi)=\bar g'(\xi)$, $a_i(\xi)=(\bar f_i\circ \bar g)'(\xi)$, $i=1,\ldots,d$, $H(\xi):= (h_{\bar f_1}
\circ \bar g)'(\xi)$, $G(\xi)
:=(\s_{\bar f_1}\circ \bar g)'(\xi)$,  
setting $a:=( a_1,\ldots, a_d)$, 
we get from \eqref{e1.12'''}
\begin{multline}\label{e1.12iv}
\la \po_{\xi} m_1,\vartheta \varphi\ra 
= \int_0^\infty\int_{\R^d}\int_\R \biggl\{a_0(\xi+\k_0W(t))  
\rho_1(t,x,\xi) \vartheta_t \\
+\sum_{k=1}^d a_k(\xi +\k_0 W(t))\rho_1(t,x,\xi) \vartheta_{x_k} 
\biggl.
 + \frac12\k_0^2 H(\xi+\k_0W(t))\rho_1(t,x,\xi) \vartheta\biggr\}  \varphi(\xi)\,d\xi\, dx\,dt \\
+\int_0^\infty \int_{\R^d}\int_\R \k_0 
 G(\xi+\k_0W(t))\rho_1(t,x,\xi)  
 \vartheta \varphi(\xi)  \,d\xi\,dx\,dW(t) \\
+ \int_{\R^d}\int_\R  
\bar{g}'(\xi)\En_{\xi<v_0(x)} \vartheta(0,x)\varphi(\xi)\,d\xi\, dx.
\end{multline}
Therefore, we see that  $\rho_1$ is a 
weak solution of the stochastic kinetic equation
\begin{multline}\label{e1.12v}
\po_t (a_0(\xi+\k_0W(t))\rho_1)+a(\xi+\k_0W(t))\cdot\nabla_x\rho_1 -\frac12\k_0^2 H(\xi+\k_0W(t))\rho_1
\\ =\po_\xi m_1+\k_0 G(\xi+\k_0W(t)) \rho_1\frac{dW(t)}{dt},
\end{multline}
in the sense of \eqref{e1.12iv} extended 
from test functions of the form $\vartheta\varphi$ 
to all test functions in $C_c^\infty(\R^{d+2})$. 
Also, from \eqref{e1.12iv}, it follows that 
\begin{equation}\label{e1.12vi}
\underset{t\to0+}{\esslim}
\int_{\R^d}\int_{\R}\rho_1(t,x,\xi)\phi(x,\xi)\,dx\,d\xi
= \int_{\R^d}\int_{\R} 
 \En_{\xi<v_0(x)}\phi(x,\xi)\,dx\,d\xi,
\end{equation}
for all $\phi\in C_c^\infty(\R^{d+1})$, see also 
Remark \ref{rem:init-measure}. 
We observe that $\rho_1$ is a kinetic 
function associated to the Young measure $\mu_{t,x}$. 
Also, it is not difficult to check, by Proposition~\ref{P:1.1}(3),  that $\mu$ 
satisfies \eqref{eq:kinetic-Lp-bound-Young} 
and we also may check that $m_1$, 
defined by \eqref{e1.12'''}, satisfies the conditions 
of a kinetic measure in Definition~\ref{def:kinetic-measure}.

\medskip
  
On the other hand, using the here called 
stochastic Kru{\v{z}}kov inequality (see Proposition~\ref{p:Kato-Kruzkov})
for \eqref{e1.4} for the entropy solution of 
\eqref{e1.4}-\eqref{e1.4'} and for the special 
solution $\psi_{*\g}(t)$,  we get
\begin{multline}\label{e1.16}
 \int_0^\infty\int_{\R^d} 
\biggl\{ \abs{\psi_{*\g}(t)-\bar u}\phi_t
+\sum_{k=1}^d \abs{\bar f_k(\psi_{*\g}(t))-\bar f_k(\bar u)}
\phi_{x_k}\biggr.\\ \biggl. 
+ \frac12\k_0^2 S_{\psi_{*\gamma}(t),\bar u }
\left(h_{\bar f_1}(\psi_{*\g})-h_{\bar f_1}(\bar u)\right)
\phi\biggr\}\,dx\,dt 
+\int_{\R^d}\abs{\psi_{*\g}(0)-\bar u(0,x)}\phi(0,x)\,dx\\
+\int_0^\infty \int_{\R^d} 
\k_0\left|\s_{\bar f_1}(\bar u)
-\s_{\bar f_1}(\psi_{*\gamma}(t))\right|\phi\,dx\,dW(t) \ge0,
\end{multline}
for all $\phi\in C_c^\infty(\R^{d+1})$. 

Let $X(t,x)=\bar f_1(\bar u(t,x))-\k_0W(t)$ 
and observe $\bar u(t,x)=\bar g(X(t,x)+\k_0W(t))$. 
We then get from \eqref{e1.16} as before,  
\begin{multline*}
\int_0^\infty\int_{\R^d}\biggl\{\left(\bar g(\g+\k_0W(t)) 
-\bar g(X(t,x)+\k_0W(t)\right)_+\phi_t\\
+\sum_{k=1}^d \left( \bar f_k\circ 
\bar g( \g+\k_0 W(t))- \bar f_k\circ \bar g(X(t,x)+\k_0W(t))\right)_+
\phi_{x_k}\biggr\} \, dt\,dx \\ 
+ \int_0^\infty\int_{\R^d} 
\frac12\k_0^2 S_{\g,X,+}\left(\bar h_{ f_1}\circ \bar g(\g+\k_0W(t))-\bar 
h_{f_1}\circ \bar g(X(t,x)+\k_0 W(t))\right)\phi \,dx\,dt \\
+\int_{\R^d}\left(\bar g(\g+V(y))
-\bar g(v_0(x)+V(y))\right)_+\phi(0,x))\,dx\\
+\int_0^\infty  \k_0 \int_{\R^d} 
\left(\s_{\bar f_1}\circ 
\bar g(\g+\k_0W(t))-\s_{\bar f_1}
\circ \bar g(X(t,x)+\k_0 W(t))\right)_+\phi\,dx\,dW(t) \ge0.
 \end{multline*}
Hence, given $\vartheta \in C_c^\infty(\R^{d+1})$, 
$\tilde \varphi\in C_c^\infty(\R)$ we can 
similarly define the measure $m_2=m_2(t,x,\xi)$ applied
to $\vartheta \tilde \varphi$ by
\begin{multline}\label{e1.16''}
\la m_2, \psi\tilde \varphi\ra 
:= \int_0^\infty\int_{\R^d}\int_{\R} 
\biggl\{ \left(\bar g(\g+\k_0W(t))
-\bar g(X(t,x)+\k_0W(t)\right)_+\vartheta_t\tilde\varphi (\xi)\\
+\sum_{k=1}^d \left( \bar f_k\circ \bar g( \g+\k_0 W(t))
- \bar f_k\circ \bar g(X(t,x)+\k_0W(t))\right)_+\theta_{x_k}\tilde\varphi(\xi) \biggr\} \,d\xi\, dx \, dt \\ 
 +\frac12\k_0^2 \int_0^\infty\int_{\R^d}\int_\R \left(\bar h_{ f_1}\circ 
 \bar g(\g+\k_0W(t))-\bar h_{f_1}\circ \bar g(X(t,x)+\k_0 W(t))\right) \vartheta \tilde \varphi(\xi)   \,d\xi\,dx\,dt 
\\+\int_{\R^d}\int_{\R}\left(\bar g(\g+V(y))
- \bar g(v_0(x)+V(y))\right)_+\vartheta(0,x)
\tilde \varphi(\xi)\,d\xi\,dx\\
+\int_0^\infty  \k_0 \int_{\R^d}\int_{\cK} 
\left(\s_{\bar f_1}\circ \bar g(\g+\k_0W(t))-\s_{\bar f_1}\circ \bar g(X(t,x)+\k_0 W(t))\right)_+ \vartheta 
\tilde \varphi (\xi) \,d\xi \,dx \,dW(t),
 \end{multline}
Therefore, we again take $\tilde \varphi=\varphi'$ 
for some $\varphi\in C_c^\infty(\R)$ and 
make an integration by parts in the integral in $\xi$. 
Hence, defining $\rho_2(t,x,\xi):=\En_{(-\infty,X(t,x))}(\xi)$, 
we see that $\rho_2$ is a weak solution 
of the stochastic kinetic equation
\begin{multline}\label{e1.16iv}
\po_t(a_0(\xi+\k_0W(t)) \rho_2)+a(\xi+\k_0W(t))
\cdot\nabla_x\rho_2 -\frac12\k_0^2 H(\xi+\k_0W(t))\rho_2
\\=\po_\xi m_2+\k_0 G(\xi+\k_0W(t)) \rho_2\frac{dW(t)}{dt},
\end{multline}
where $a_i$, $i=0,\ldots,d$, $H$ and $G$ are as before. 
Also, from \eqref{e1.16''}, it follows that 
\begin{equation}\label{e1.16v}
\underset{t\to0+}{\esslim}
\int_{\R^d}\int_{\R}
\rho_2(t,x,\xi)\phi(x,\xi)\,dx\,d\xi= 
\int_{\R^d}\int_{\R} \En_{\xi<v_0(x)}
\phi(x,\xi)\,dx\,d\xi,
\end{equation}
for all $\phi\in C_c^\infty(\R^{d+1})$. Since $\rho_2$ is a standard kinetic function, a well known argument shows that the convergence in \eqref{e1.16v} may be strengthen to a convergence in $L^1(\R^N;w_N)$ (see also 
Remark~\ref{rem:init-measure}). 

Therefore, $\rho_1$, $\rho_2$ 
are weak solutions of identical kinetic 
equations, \eqref{e1.12v} and \eqref{e1.16iv}, 
with possibly distinct kinetic measures $m_1$ 
and $m_2$, and satisfy identical 
initial conditions \eqref{e1.12vi} and \eqref{e1.16v}. 

Our next goal is to prove that $\mu_{t,x}=\d_{X(t,x)}$ 
a.s.~and to do that we are going to prove the uniqueness 
of the weak solution of  \eqref{e1.12v}-\eqref{e1.12vi} 
or \eqref{e1.16iv}-\eqref{e1.16v}, independently 
of the corresponding kinetic measure.  
This, in turn, will be a consequence of the next lemma.

\begin{lemma}[rigidity/comparison result]\label{l3.1}
Let $\rho_1(t,x,\xi)$, $\rho_2(t,x,\xi)$ 
be generalized kinetic functions, that is, 
functions taking values in $[0,1]$ such 
that $-\partial_\xi \rho_1$ and 
$-\partial_\xi \rho_2$ are Young measures, 
which solve equations \eqref{e1.12v} 
and \eqref{e1.16iv} with initial conditions $\rho_{0,1}$ 
and $\rho_{0,2}$, respectively, 
where $m_1$ and $m_2$ are kinetic measures 
in the sense of Definition \ref{def:kinetic-measure}. 
Then there is a constant $C>0$ such that
\begin{equation}\label{e3.22}
\begin{split}
&\mathbb{E}\int_{\R^d}\int_\R \a_0(\xi+\k_0W(t)) 
\rho_1(1-\rho_2)(t)w_N\,d\xi\,dx 
\\ & \qquad \leq C \mathbb{E} \int_{\R^d}\int_\R 
a_0(\xi)\rho_{0,1}(1-\rho_{0,2})w_N\,d\xi\,dx,
\end{split}
\end{equation}
for a.e.~$t\in [0,T]$, where $w_N$ is the weight 
given by \eqref{eq:scl-weight-def}.
\end{lemma}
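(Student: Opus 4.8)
The plan is to run a kinetic--uniqueness argument in the spirit of Debussche--Vovelle and of \cite{Da}, adapted to the feature that the coefficients in \eqref{e1.12v} and \eqref{e1.16iv} depend on $\xi+\k_0W(t)$. First I would record the identities $G(\xi)=\bar g''(\xi)=a_0'(\xi)$ and $H(\xi)=\bar g'''(\xi)=a_0''(\xi)$, which follow from $\s_{\bar f_1}\circ\bar g=\bar g'$ and $h_{\bar f_1}\circ\bar g=\bar g''$ (a direct consequence of $\s_{\bar f_1}=1/\bar f_1'$, $h_{\bar f_1}=-\bar f_1''/(\bar f_1')^3$ and $\bar g=\bar f_1^{-1}$), so that the zeroth-order coefficient and the noise coefficient are both expressible through $a_0$. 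Consequently, applying It\^o's formula to $t\mapsto a_0(\xi+\k_0W(t))$ shows that the constant function $1$ solves \eqref{e1.12v} with kinetic measure $\equiv0$; subtracting the equation for $\rho_2$, the function $\bar\rho_2:=1-\rho_2$ solves the same type of equation with $-m_2$ in place of $m_1$, and $\partial_\xi\bar\rho_2=\nu_2\ge0$ with $\nu_i:=-\partial_\xi\rho_i$. Then I would regularise $\rho_1$ and $\bar\rho_2$ by convolution in $x$ and in $\xi$ against standard mollifiers. Since $a_0(\xi+\k_0W),a(\xi+\k_0W),G(\xi+\k_0W),H(\xi+\k_0W)$ are independent of $x$, the $x$-convolution commutes exactly with the transport operator, and the $\xi$-commutators tend to $0$ because $\bar g,\bar g',\bar g'',\bar g'''\in L^\infty(\R)$ (in particular $a_0$ is Lipschitz); so the regularised $\rho_1^\eps,\bar\rho_2^\eps$ solve \eqref{e1.12v}, resp. its $\bar\rho_2$-analogue, up to drift errors $r^\eps\to0$.

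Next I would apply It\^o's product rule to $\bigl(a_0(\xi+\k_0W(t))\rho_1^\eps\bigr)\bigl(a_0(\xi+\k_0W(t))\bar\rho_2^\eps\bigr)$, integrate against $w_N(x)\,dx\,d\xi$, and take expectation. The stochastic integral has an $L^2$ integrand and drops out; the It\^o bracket and the $\tfrac12\k_0^2H$ term combine into $\k_0^2\bigl(a_0a_0''+(a_0')^2\bigr)\rho_1^\eps\bar\rho_2^\eps$, bounded by $C\iint a_0\rho_1^\eps\bar\rho_2^\eps w_N$ since $a_0,a_0',a_0''$ are bounded and $a_0\ge1/\|\bar f_1'\|_\infty>0$. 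The transport contribution, after integrating by parts in $x$, equals $\iint a_0\,(a\cdot\nabla_x w_N)\,\rho_1^\eps\bar\rho_2^\eps\,d\xi\,dx$, again $\le C\iint a_0\rho_1^\eps\bar\rho_2^\eps w_N$ because of the defining property $|\nabla_x w_N|\le Cw_N$ of the weight \eqref{eq:scl-weight-def} and the $L^\infty$-bounds on $a_0$ and on the components $a_k=(\bar f_k\circ\bar g)'$ of $a$. The kinetic-measure contribution, after an integration by parts in $\xi$ and passage to the limit $\eps\to0$, splits as a manifestly non-positive part $-\iint a_0\bigl(\nu_2\,m_1+\nu_1\,m_2\bigr)w_N\le0$ (using $\partial_\xi\rho_1=-\nu_1\le0$, $\partial_\xi\bar\rho_2=\nu_2\ge0$, $m_1,m_2\ge0$), which is discarded, plus lower-order terms of the type $\iint a_0'\bigl(\rho_1m_2-\bar\rho_2m_1\bigr)w_N$.

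The \textbf{main obstacle} is precisely the control of these last terms, which have no analogue in the constant-coefficient case treated by Proposition~\ref{prop:scl-rigidity}: because $a_0(\xi+\k_0W)$ depends on $\xi$, differentiating it in $\xi$ when integrating the measure terms by parts produces $a_0'$ paired with $m_1,m_2$ rather than with a Young measure. The plan is to keep the $\xi$-regularisation until the very end, so that in the doubling-of-variables scheme the measures $m_i$ are always tested against a $\xi$-derivative of the mollifier; one then shows these terms are dominated by the dissipative part together with a quantity controlled by $\EE\int_0^t\iint a_0\rho_1\bar\rho_2\,w_N$, using the comparability $1/\|\bar f_1'\|_\infty\le a_0\le1/\delta_0$ and the finiteness $\EE\iint m_i\,w_N<\infty$ guaranteed by Definition~\ref{def:kinetic-measure}. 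An alternative would be to transfer \eqref{e1.12v} to the physical variable $\eta=\bar g(\xi+\k_0W(t))$ by an It\^o--Wentzell change of variables, which removes both the stochastic terms and the $\xi$-dependence of the coefficients and reduces the statement to Proposition~\ref{prop:scl-rigidity} applied $\bbP$-a.s.; but carrying out such a change of variables for a merely generalised kinetic function is itself delicate, so the direct estimate is cleaner.

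Collecting the bounds, every term other than the discarded non-positive one is $\le C\,\EE\iint a_0\rho_1\bar\rho_2\,w_N$, so after passing to the limit $\eps\to0$, Gronwall's inequality applied to $t\mapsto\EE\iint a_0(\xi+\k_0W(t))\,\rho_1(1-\rho_2)(t)\,w_N\,dx\,d\xi$ yields \eqref{e3.22} with $C=e^{C'T}$, the right-hand side being evaluated at $t=0$ where $W(0)=0$ so that $a_0(\xi+\k_0W(0))=a_0(\xi)$.
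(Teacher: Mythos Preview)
Your identification of the key difficulty is correct, but your proposed resolution does not work, and the paper takes a different route that sidesteps the obstacle entirely.

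The problem is in the kinetic-measure terms. With the symmetric factorization $(a_0\rho_1^\eps)(a_0\bar\rho_2^\eps)$ you produce, after integration by parts in $\xi$, residual terms of the form $\int a_0'\,\bar\rho_2\,m_1\,w_N$ and $\int a_0'\,\rho_1\,m_2\,w_N$. These have no sign, and your suggestion to ``dominate by the dissipative part together with a quantity controlled by $\EE\int_0^t\iint a_0\rho_1\bar\rho_2\,w_N$'' cannot succeed: the dissipative part $-\int a_0(\nu_2m_1+\nu_1m_2)w_N$ involves the Young measures $\nu_i$ and gives no control on $\int m_i\,w_N$ itself, while invoking the finiteness $\EE\iint m_i\,w_N<\infty$ merely adds a \emph{fixed} constant to the right-hand side of your pre-Gronwall inequality. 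Gronwall then yields $\text{LHS}(t)\le (\text{RHS}(0)+C')e^{Ct}$, which is useless for \eqref{e3.22} when $\rho_{0,1}(1-\rho_{0,2})\equiv0$.

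The paper avoids this by an \emph{asymmetric} factorization. One first derives the equation satisfied by $\rho_1$ alone, writing $\rho_1=\tilde a_0\cdot(a_0\rho_1)$ with $\tilde a_0:=1/a_0$ and applying It\^o's product rule. Because of the identities $a_0'=G$ and $a_0''=H$ (which you also noted), all drift and martingale contributions cancel exactly, and one obtains the remarkably simple equation
\[
\partial_t\rho_1+\tilde a_0\,a\cdot\nabla_x\rho_1=\tilde a_0\,\partial_\xi m_1,
\]
with \emph{no stochastic term}. One then multiplies $\rho_1$ by $a_0(1-\rho_2)$ (not $a_0\rho_1$ by $a_0(1-\rho_2)$). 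The It\^o bracket vanishes because $\rho_1$ has no martingale part, and the measure contribution from $\rho_1$ becomes $a_0(1-\rho_2)\cdot\tilde a_0\,\partial_\xi m_1=(1-\rho_2)\,\partial_\xi m_1$: the prefactor $\tilde a_0$ is exactly cancelled by $a_0$. After integration by parts in $\xi$, this gives $-\int\nu_2\,m_1\le0$ with no $a_0'$ residue; likewise the $m_2$ contribution is $-\rho_1\partial_\xi m_2\to-\int\nu_1 m_2\le0$. Both measure terms can then be discarded by sign, and Gronwall closes cleanly.

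Your alternative suggestion (an It\^o--Wentzell change of variable $\eta=\bar g(\xi+\k_0W(t))$) is in fact morally the same manoeuvre: the multiplication by $\tilde a_0$ is the infinitesimal version of that substitution, and is precisely what the paper does.
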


\begin{remark}\label{rem:new-kinetic-eqn}
Observe that the stochastic kinetic 
equations \eqref{e1.12v}, \eqref{e1.16iv} 
are different from the equations 
\eqref{e2.19}, \eqref{e2.22'} (which are 
of the type analyzed in Section \ref{S:4}). 
In particular, the former two equations do not 
have \textit{gradient} noise and 
a second order differential operator. 
They do, however, contain coefficients that 
are predictable random fields. We recall that 
a continuous mapping $H=H(\omega,t,x,u):\Omega\times [0,T]
\times \R^d\times \R\to \R$
is a called a random field when it is viewed as a 
random variable $\omega\mapsto H(\omega,t,x,u)$, with 
$(t,x,u)$ fixed. If, for each fixed $(x,u)$, the 
stochastic process $(\omega,t)\mapsto H(\omega,t,x,u)$ 
is $\seq{\cF_t}_{t\in [0,T]}$-predictable, then $H$ is called 
a predictable random field. Nevertheless, 
given the crucial observation below, 
cf.~\eqref{e1.pf2}, the analysis in Section \ref{S:4} 
carries over to the stochastic kinetic equations 
\eqref{e1.12v} and \eqref{e1.16iv}, 
see in particular Proposition \ref{prop:scl-rigidity}, 
the equation \eqref{eq:scl-S(varrho)-approx}, 
and \eqref{eq:scl-varrho_1(1-varrho_2)} 
and the discussion found at the beginning of Section \ref{S:5}. 

To keep this paper at a reasonable length, we 
will only supply a sketch of the proof of Lemma \ref {l3.1}, 
focusing on the formal argument leading up the crucial equation \eqref{e1.pf2}, from which we can proceed as in Section \ref{S:4}. 
The rigorous proof relies on the usual regularization 
procedure, the It\^{o} product formula, and commutator 
estimates to control regularization errors. 
In fact, the step involving regularization by convolution (in $x,\xi$) 
is simpler (than in Section \ref{S:4}) since 
there are no error terms that require 
second-order commutator estimates, like \eqref{eq:Qreg}, 
that is, all the error terms can be handled 
using the standard DiPerna-Lions folklore 
lemma \cite{DiPerna:1989aa}. 
We refer to Section \ref{S:4} for details.
\end{remark}

\begin{proof}[Sketch of proof of Lemma \ref{l3.1}]

We will first formally derive stochastic kinetic 
equations for $\rho_1$ and $\a_0(\xi+\k_0W(t))(1-\rho_2)$. 
Then, combining the resulting equations, It\^ o's 
product formula will provide (at least formally) 
an equation for $a_0(\xi+\k_0W(t)\rho_1(1-\rho_2)$, 
which we can use to prove \eqref{e3.22}, along 
the lines of Section \ref{S:4}. 

We observe that we can write \eqref{e1.12v} 
as a stochastic differential equation of the following 
form, where we drop the subscript $1$ 
in $\rho_1$ and $m_1$,
\begin{equation*}
	d(a_0(\xi+\k_0W(t))\rho)
	=A\,dt+B\,dW+\po_\xi m,
\end{equation*}
where
$$
A=-a(\xi+\k_0W(t))\cdot\nabla_x\rho
+\frac12\k_0^2 H(\xi+\k_0W(t))\rho,
\quad
B=\k_0 G(\xi+\k_0W(t)) \rho.
$$
Thanks to the It\^o formula, $a_0(\xi+\k_0W(t))$ satisfies 
the stochastic differential equation
\begin{equation}\label{e3.23}
da_0(\xi+\k_0W(t))=\k_0 a_0'(\xi+\k_0W(t))\,dW
+\frac12\k_0^2 a_0''(\xi+\k_0W(t))\,dt,
\end{equation}
and, by the formulas for $a_0$, $\s_{\bar f_1}$, 
and $h_{\bar f_1}$, we have 
$$
a_0(\xi)= \s_{\bar f_1}\circ \bar g(\xi),
\qquad a_0'=(\s_{\bar f_1}\circ \bar g)'(\xi),
\qquad a_0''=(h_{\bar f_1}\circ \bar g)'(\xi),
$$
so
\begin{equation}\label{e3.24}
a_0'(\xi)= G(\xi),\quad a_0''(\xi)=H(\xi).
\end{equation}
Denoting $\tilde a_0(\xi)=\frac1{a_0(\xi)}$ 
we get, also from It\^o's formula, the following 
stochastic differential equation for $\tilde a_0(\xi)$:
$$
d\tilde a_0(\xi+\k_0W(t))=
\k_0 \tilde a_0'(\xi+\k_0W(t))\,dW
+\frac12\k_0^2 \tilde a_0''(\xi+\k_0W(t))\,dt,
$$
where, by virtue of \eqref{e3.24},
\begin{equation}\label{e3.24'}
\tilde a_0'(\xi) =-\frac{G(\xi)}{a_0(\xi)^2}, \quad \tilde a_0''(\xi)=\frac{2G(\xi)^2 - a_0(\xi) H(\xi)}{a_0(\xi)^3}.
\end{equation}
By the It\^o product rule, 
\begin{multline*}
d\rho=d\bigl(\tilde a_0(\xi+\k_0W(t))a_0(\xi+\k_0W(t))\rho\bigr)\\
=a_0(\xi+\k_0W(t))\rho\, d(\tilde a_0(\xi+\k_0W(t)))
+\tilde a_0(\xi+\k_0W(t))\, d(a_0(\xi+\k_0W(t))\rho)\\
+\left[\tilde a_0(\xi+\k_0W(t)), a_0(\xi+\k_0W(t))\rho\right]\\
=\k_0 a_0(\xi+\k_0W(t))\tilde a_0'(\xi+\k_0W(t))\rho\, dW+\frac12 \k_0^2a_0(\xi+\k_0W(t))\tilde a_0''(\xi+\k_0W(t))\,\rho\,dt\\
+\tilde a_0(\xi+\k_0W(t))A\,dt+\tilde a_0(\xi+\k_0W(t)) B\,dW+\tilde a_0(\xi+\k_0W(t))\po_\xi m\\
+\k_0\tilde a_0'(\xi+\k_0W(t))B\,dt.
\end{multline*}
In sum, we deduce that $\rho_1$ satisfies the 
stochastic kinetic equation
\begin{multline*}
\po_t\rho_1+\tilde a_0(\xi+\k_0W(t))a(\xi+\k_0W(t))
\cdot\nabla_x\rho_1 \\
= \frac12\k_0^2\tilde A \rho_1\, 
+ \k_0 \tilde B\rho_1\, \frac{dW(t)}{dt}
+ \tilde a_0(\xi+\k_0W(t))\po_\xi m_1,
\end{multline*}
where
\begin{multline*}
\tilde A = \tilde a_0(\xi+\k_0W(t)) H(\xi+\k_0W(t))+
2\tilde a_0'(\xi+\k_0W(t)) G(\xi+\k_0W(t)) \\
+ a_0(\xi+\k_0W(t))\tilde a_0''(\xi+\k_0W(t)),
\end{multline*}
and
$$
\tilde B = a_0(\xi+\k_0W(t))
\tilde a_0'(\xi+\k_0W(t))
+\tilde a_0(\xi+\k_0W(t)) G(\xi+\k_0W(t)).
$$

At this point, we observe that \eqref{e3.24} 
and \eqref{e3.24'} imply that $\tilde A=0$ and $\tilde B=0$. 
Thus, we conclude that $\rho_1$ 
satisfies the (much simpler) equation
\begin{equation}\label{e1.pf2}
\po_t\rho_1+\tilde a_0(\xi+\k_0W(t))
a(\xi+\k_0W(t))\cdot\nabla_x\rho_1 
=  \tilde a_0(\xi+\k_0W(t))\po_\xi m_1.
\end{equation}

In view of \eqref{e1.16iv} and \eqref{e3.23},
we obtain the following equation 
for $a_0(\xi+\k_0W(t))(1-\rho_2)$:
\begin{multline}\label{e1.pf3}
\partial_t\left(a_0(\xi+\k_0W(t))(1-\rho_2)\right) 
+ a(\xi+\k_0W(t))\cdot\nabla_x(1-\rho_2) 
-\frac{1}{2}\k_0^2H(\xi+\k_0W(t))(1-\rho_2)\\
=-\partial_\xi m_2 
+ \k_0G(\xi+\k_0W(t))(1-\rho_2)\frac{dW(t)}{dt}.
\end{multline}

Given the stochastic kinetic 
equations \eqref{e1.pf2} and \eqref{e1.pf3}, 
we may apply (again formally) It\^ o's product rule to obtain
\begin{align*}
&d\left( a_0(\xi+\k_0W(t))\rho_1(1-\rho_2)\right)  \\
&\qquad=a_0(\xi+\k_0W(t))(1-\rho_2)\, d\rho_1 
+ \rho_1\, d\left( a_0(\xi+\k_0W(t))(1-\rho_2) \right)\\
&\qquad\qquad\qquad 
+ \left[ \rho_1, a_0(\xi+\k_0W(t))(1-\rho_2) \right]\\
&\qquad= -a(\xi+\k_0W(t))\cdot(1-\rho_2)\nabla_x\rho_1\, dt 
+ (1-\rho_2)\partial_\xi m_1 \\
&\qquad\qquad -a(\xi+\k_0W(t))\cdot\rho_1\nabla_x(1-\rho_2)\, dt
+ \frac{1}{2}\k_0^2H(\xi+\k_0W(t))\rho_1(1-\rho_2)\, dt\\
&\qquad\qquad\qquad-\rho_1\partial_\xi m_2 
+ \k_0G(\xi+\k_0W(t))\rho_1(1-\rho_2)dW(t).
\end{align*}

In other words, we have the following 
equation for $a_0(\xi+\k_0W(t))\rho_1(1-\rho_2)$:
\begin{multline}\label{e1.pf4}
\partial_t\left( a_0(\xi+\k_0W(t))\rho_1(1-\rho_2)\right) 
+ a(\xi+\k_0W(t))\cdot\nabla_x\left(\rho_1(1-\rho_2)\right)\\
=\frac{1}{2}\k_0^2H(\xi+\k_0W(t))\rho_1(1-\rho_2) 
+ (1-\rho_2)\partial_\xi m_1 - \rho_1\partial_\xi m_2\\
+  \k_0G(\xi+\k_0W(t))\rho_1(1-\rho_2)dW(t).
\end{multline}

Note that the coefficient $a_0(\xi+\k_0W(t))$ 
in the equation for $(1-\rho_2)$ provides a cancellation with the coefficient $\tilde a_0(\xi+\k_0W(t))$ which 
multiplies the measure $m_1$ on the right-hand-side of \eqref{e1.pf2}. 
This cancellation, which results in the 
term $(1-\rho_2)\partial_\xi m_2$ on the right-hand side 
of equation \eqref{e1.pf4}, is essential for 
the proof of Lemma~\ref{l3.1}, as it will allow us to 
discard this term later on in the analysis based on its sign, 
after integration by parts. Similarly, the 
term $-\rho_1 \partial_\xi m_2$, which is of the 
same nature, will also be discarded by its sign. 
To carry on the proof, we take appropriate test 
functions in the equation \eqref{e1.pf4} and 
manipulate the remaining terms to 
conclude by applying Gronwall's inequality. 

At last, we reiterate that the above argument 
can be turned into a rigorous proof using 
regularization by convolution (in $x,\xi$), 
following Section \ref{S:4}.

\end{proof}

Finally, in view of Lemma \ref{l3.1}, 
we deduce that
\begin{multline*}
\EE\int_{\R^d}\int_{\R}a_0(\xi+\k_0W(t)) 
\rho_1(t,x,\xi)\left(1-\rho_2(t,x,\xi)\right)w_N\,dx\,d\xi 
\\ \le C(T) \int_{\R^d}\int_{\R} \a_0(\xi)
\rho_{1,0}(x,\xi)\left(1-\rho_{2,0}(x,\xi)\right)
w_N\,dx\,d\xi=0,
\end{multline*}
for $0<t\le T$,  and so, since $\rho_1$ 
and $\rho_2$ coincide at $t=0$, both 
being equal to $\En_{\xi<v_0(x)}$, we obtain 
\begin{equation*}
\mu_{t,x}=\d_{\bar f_1(\bar u(t,x))-\k_0W(t)},
\end{equation*}
and consequently 
\begin{equation*}
\nu_{t,x,y}= \d_{g(\bar f_1(\bar u(t,x))+V(y))},
\quad
\text{$\bbP$-a.e.\ in $\Om$}.
\end{equation*}
In particular, it follows that, 
$u^\ve\wto \int_\cK g(\bar f_1(\bar u(\cdot,\cdot)+V(y)))d\mm(y) =\bar u$ in the weak--$\star$ 
topology of $L^2(\Om;L_\loc^2((0,T)\X\R^d))$, 
for all $T>0$ (cf. Remark~\ref{R:2.1} above). Note that we used the uniqueness of the limit to conclude that the whole sequence $u^\ve$ converges, similarly as in the proof of Theorem~\ref{T:1.2}. 


Again, the last assertion in Theorem~\ref{T:1.1} follows directly from Proposition~\ref{P:1.2}~(a). This concludes the proof of Theorem \ref{T:1.1}.

\section{A well-posedness result}\label{S:4}

In this section, we provide a well-posedness 
result for a class of stochastic conservation laws 
that is (more than) general enough to encompass some of the equations encountered earlier in this paper; namely, hyperbolic 
conservation laws with variable coefficients and 
deterministic/stochastic source terms, posed on an 
unbounded spatial domain ($\R^d$), see 
Remark \ref{rem:our-eqs} for further details on 
the class of equations. Since these equations are not all 
covered by the available well-posedness literature 
\cite{BVW1,BVW2,CDK,DV,DHV,Dotti:2018aa,FN,Ha,KSt,Kim,Kobayasi:2016aa,Lv:2016aa}, we will outline some of the arguments leading 
to this result, particularly the uniqueness part of it. 
On a technical level, the approach presented 
here is somewhat different from the one \cite{DV} 
utilized in many of the references listed above.

The initial--value problem for these SPDEs take the form
\begin{equation}\label{eq:scl}
	\begin{split}
		& \pt u + \Div_x A(t,x,u)
		=B(t,u)\dotW(t) + R(t,x,u),
		\quad (t,x)\in (0,T)\times \R^d,\\
		& u(0,x)=u_0(\omega,x), \quad x\in \R^d,
	\end{split}	
\end{equation}
where $W$ is a cylindrical Wiener 
process \cite{DPZ} with noise amplitude $B$, $A=(A_1,\ldots,A_d)$ 
is the flux vector, $R$ is the "deterministic" source term, $u_0$ is the initial function, and $T>0$ is a fixed final time. 
We fix a stochastic basic $\cS$ 
consisting of a complete probability space $(\Omega,\cF, P)$, 
a complete right-continuous 
filtration $\seq{\cF_t}_{t\in [0,T]}$, and a sequence 
$\set{W_k}_{k=1}^\infty$ of independent 
one-dimensional Wiener processes 
adapted to the filtration $\set{\cF_t}_{t\in [0,T]}$. 

We assume that the flux $A$ belongs to 
$C([0,T];C^2(\R^d\times\R;\R^d))$ and
\begin{equation}\label{eq:scl-flux-Lipschitz1}
	\begin{split}
		& \abs{A(t,x,u)}
		\le m_a(t)\left(1+\abs{u}\right)\left(1+\abs{x}\right),
		\\ & \abs{A(t,x,u)-A(t,x,v)} 
		\le m_a(t)\abs{u-v}\left(1+\abs{x}\right),
	\end{split}
\end{equation}
for $t\in [0,T]$, $x\in\R^d$, 
and $u,v\in\R$, where $m_a(t)$ is an integrable function. 
Moreover, 
\begin{equation}\label{eq:scl-flux-divx}
	\abs{(\Div_x A)(t,x,u)}
	\le m_d(t)\left(1+\abs{u}\right), 
	\qquad (\Div_x A)(t,x,0)=0,
\end{equation}
for $t\in [0,T]$, $x\in\R^d$, and $u\in\R$, where 
$m_d(t)$ is another integrable function.  Note that, without loss of generality, we 
may always assume $(\Div_x A)(t,x,0)=0$.

We assume that the source function $R$ belongs to 
$C([0,T];C^1(\R^d\times\R))$, and
\begin{equation}\label{eq:scl-reaction-Lipschitz1}
	\abs{R(t,x,u)}
	\le m_R(t)\left(1+\abs{u}\right),  	
	\quad 
	\abs{R(t,x,u)-R(t,x,v)}
	\le m_R(t)\abs{u-v},
\end{equation}
for $t\in [0,T]$, $x\in\R^d$, and $u,v\in\R$, where 
$m_R(t)$ is an integrable function. 

The driving noise $W$ is a cylindrical Wiener process \cite{DPZ},
\begin{equation}\label{eq:scl-cyl-Wiener1}
	W(t)=\sum_{k\geq 1}W_k(t)\psi_k,
\end{equation}
evolving over a separable Hilbert space $\fU$, 
equipped with an orthonormal basis $\seq{\psi_k}_{k\geq 1}$. 
The series \eqref{eq:scl-cyl-Wiener1} 
converges in an auxiliary (larger) Hilbert space $\fU_0$ with 
Hilbert-Schmidt embedding $\fU\subset \fU_0$. 
The (nonlinear) noise amplitude $B=B(\omega,t,u)$ is an 
operator-valued mapping. For each $u\in L^2(\R^d)$, 
we define $B(t,u)$ by its action on each $\psi_k$: 
\begin{equation*}
	B(t,u)\psi_k:=b_k(\omega,t,\cdot,u(\cdot)), 
	\qquad b_k\in C([0,T]\times\R^d\times\R), 
	\qquad k\in \N.
\end{equation*}
We then obtain
\begin{equation}\label{eq:scl-infinite-Wiener}
	B(t,u)\dW(t) = \sum_{k\ge1} b_k(t,x,u)\dW_k(t).
\end{equation}
We assume that the sequence $\seq{b_k}_{k\ge 1}$ 
satisfy the following conditions:
\begin{align}
	\label{eq:scl-def-noise-B}
	& B^2(t,x,u):=\sum_{k\geq 1}\left(b_k(t,x,u)\right)^2
	\lesssim 1+\abs{u}^2, 
	\\ &
	\label{eq:scl-noise-bk-reg}
	\sum_{k\geq 1}\abs{b_k(t,x,u)-b_k(t,y,v)}^2
	\lesssim \abs{x-y}^2+\abs{u-v}\mu(\abs{u-v}),
\end{align}
for $\omega\in \Omega$, $t\in [0,T]$, $x,y\in\R^d$, 
and $u,v\in\R$, for some continuous nondecreasing 
function $\mu$ on $\R_+$ with $\mu(0+)=0$. 
The "Lipschitz case" corresponds to $\mu(\xi)=\xi$.

\begin{remark}
We have assumed that the coefficients $A$, $B$, and 
$R$ in \eqref{eq:scl} are deterministic. 
However, this is not necessary. Indeed, the 
results presented in this section 
carry over to the case where $A,B,R$ are 
predictable random fields satisfying conditions 
similar to those listed above (cf.~Remark 
\ref{rem:new-kinetic-eqn} for the notion 
of predictable random field).
\end{remark}

The initial function $u_0$ is an $\cF_0$-measurable 
random variable satisfying 
\begin{equation}\label{eq:scl-init-ass}
	u_0\in L^\infty\left(\Omega;L^\infty(\R^d)\right).
\end{equation}

Given  a convex $S\in C^2(\R)$, define 
$Q_S:[0,T]\times \R\times\R \rightarrow \R^d$ by 
$(\partial_u Q_S)(t,x,u)=S'(u)(\partial_u A)(t,x,u)$. 
We call $(S,Q_S)$ an \emph{entropy/entropy-flux pair} 
and write $(S,Q_S)\in \mathscr{E}$. 
For \eqref{eq:scl} the entropy inequalities read 
{\small
\begin{equation}\label{eq:scl-entropy-ineq}
	\begin{split}
		&\pt S(u) + \Div_x Q_S(t,x,u) 
		+ S'(u)\left((\Div_x A)(t,x,u)-R(t,x,u)\right) 
		- (\Div_x Q_S)(t,x,u)
		\\ & \quad 
		\le \sum_{k\ge1} S'(u) b_k(t,x,u)\, \dotW_k(t)
		+ \frac12 S''(u) B^2(t,x,u)
		\quad \text{in $\Dp([0,T)\times \R^d)$, a.s., 
		$\forall (S,Q)\in \mathscr{E}$}.
	\end{split}
\end{equation}
}

\begin{remark}[weighted $L^p$ estimates]\label{rem:drop-the-weight}
For discontinuous solutions, the entropy inequalities 
act as a replacement for the It\^{o} (temporal) and classical (spatial) 
chain rules. It follows from \eqref{eq:scl-entropy-ineq} with 
$S(u)=u^p$ ($p\ge 2$) and a standard 
martingale argument that 
\begin{equation*}
	u\in L^p\left(\Omega;
	L^\infty\left(0,T;L^p(w_Ndx)\right)\right),
\end{equation*}
where $L^p(w_Ndx)$ denotes the weighted 
$L^p$ space of functions $v:\R^d\to \R$ for which 
\begin{equation*}
	\int_{\R^d} \abs{v}^p\, w_Ndx<\infty.
\end{equation*}
Throughout this section, 
we make use of the weight function 
\begin{equation}\label{eq:scl-weight-def}
	w_N(x) = (1 + \abs{x}^2)^{-N},
	\quad N>d/2.
\end{equation}
This function is integrable on $\R^d$ and satisfies 
\begin{equation*}
	\nabla w_N(x) = \frac{-2Nx}{1+\abs{x}^2}w_N(x)
	\quad \Longrightarrow \quad
	\abs{\nabla w_N(x)} \lesssim 
	\frac{w_N(x)}{1+\abs{x}}.
\end{equation*}
Note that $L^p(w_Ndx)$--bounds with $p\in [1,2)$ 
follow trivially from the $L^2(w_Ndx)$--bound. 
\end{remark}

\begin{remark}[weight-free framework]
The It\^{o} noise term continuously injects "entropy" into the system, 
cf.~the $S''B^2$--term in \eqref{eq:scl-entropy-ineq}. 
Suppose $B(t,x,0)=0$. Then the ordinary $L^p$ spaces constitute 
a natural choice for \eqref{eq:scl}, in 
which case we may drop the weight $w_N$ and obtain 
$u\in L^p\left(\Omega;L^\infty\left(0,T;L^p(\R^d)\right)\right)$ 
for all $p\in [2,\infty)$, provided 
\begin{equation}\label{eq:scl-init-ass-new}
	u_0\in L^\infty\left(\Omega;
	\left(L^2\cap L^\infty\right)(\R^d)\right).
\end{equation}
Without this assumption ($B(\omega,t,x,0)\neq 0$), 
weighted $L^p$ spaces appear to be better suited. 

\medskip\noindent
We can also drop the weight $w_N$ at the expense of imposing 
a stronger condition on $B^2$ as $\abs{x}\to \infty$, 
cf.~\eqref{eq:scl-def-noise-B}, namely that
\begin{equation}\label{eq:scl-def-noise-B-new}
	B^2(\omega,t,x,u)
	\leq \left(b(x)\right)^2\left(1+\abs{u}^2\right), 
	\qquad 
	b\in \left(L^2\cap L^\infty\right)(\R^d),
\end{equation}
for $\omega\in \Omega$, $t\in [0,T]$, $x\in\R^d$, 
and $u\in\R$. Under this assumption or $B(\omega,t,x,0)\equiv 0$, 
it is possible to use \eqref{eq:scl-entropy-ineq}, 
with $S(\cdot)\approx \abs{\cdot}$ and $S''(\cdot)\approx \delta(\cdot)$, 
to arrive at an $L^1$ bound, and consequently 
$u\in L^p\left(\Omega;L^\infty\left(0,T;L^p(\R^d)\right)\right)$ 
for all $p\in [1,\infty)$, in the event that $u_0\in 
L^\infty\left(\Omega;\left(L^1\cap L^\infty\right)(\R^d)\right)$. 
At the same time, it is possible to replace the assumptions 
on the flux function, cf.~\eqref{eq:scl-flux-Lipschitz1} 
and \eqref{eq:scl-flux-divx}, by the following more general ones:
\begin{equation}\label{eq:scl-flux-Lipschitz1-new}
	\begin{split}
		& A(t,x,u)=\tilde A(t,x,u)+\tilde{\tilde A}(t,u),
		\\ & \abs{\tilde A(t,x,u)}\le 
		m_a(t)\left(1+\abs{u}\right)\left(1+\abs{x}\right),
		\quad \abs{\tilde{\tilde A}(t,x,u)}
		\le m_a(t)\left(1+\abs{u}^{r_a}\right),
		\\ & 
		\abs{\tilde A(t,x,u)-\tilde A(t,x,v)} 
		\le m_a(t)\abs{u-v}\left(1+\abs{x}\right),
		\\ & 
		\abs{\tilde{\tilde A}(t,u)-\tilde{\tilde A}(t,v)} 
		\le m_a(t)\left(1+\abs{u}^{r_a-1}+\abs{v}^{r_a-1}\right)\abs{u-v},
		\\ & 
		\abs{(\Div_x \tilde A)(t,x,u)}
		\le m_d(t)\left(1+\abs{u}\right), 
		\qquad (\Div_x \tilde A)(t,x,0)=0,
	\end{split}
\end{equation}
for $t\in [0,T]$, $x\in\R^d$, and $u,v\in\R$, where 
$r_a \ge 1$ is a number and $m_a,m_d$ are integrable functions on $[0,T]$. 
"Globally Lipschitz" fluxes correspond 
to setting $\tilde{\tilde A}\equiv 0$ 
in \eqref{eq:scl-flux-Lipschitz1-new}, while "polynomially 
growing" ($x$-independent) fluxes correspond to 
setting $\tilde A\equiv 0$. In the "weight-free" $L^p$ 
framework it is natural to assume \eqref{eq:scl-init-ass-new}.

\medskip\noindent
Most of the works on kinetic solutions for stochastic 
conservation laws have dealt with the torus case ($\Bbb{T}^d$), and 
$x$-independent flux / no reaction term. The works on 
entropy solutions, on the other hand, have considered the 
unbounded domain case ($\R^d$), often with globally Lipschitz 
($x$-independent) flux and no reaction term. In \cite{FN} 
the authors allow for a polynomially growing flux $A=A(u)$ (and 
$R\equiv 0$), corresponding to the $\tilde{\tilde A}=\tilde{\tilde A}(u)$ part of our flux. 
Existence of an entropy solution is proved in \cite{FN} under the 
assumptions \eqref{eq:scl-init-ass-new} and 
\eqref{eq:scl-def-noise-B-new}, whereas uniqueness is 
established under the weaker condition \eqref{eq:scl-def-noise-B}. 
These results, based on entropy solutions, are consistent 
with ours based on kinetic solutions. 

\medskip\noindent
For some specific choices of the noise amplitude $B$ it is 
possible to construct $L^\infty$ solutions of \eqref{eq:scl}, 
that is, $u\in L^\infty_{\omega,t,x}$, assuming 
\eqref{eq:scl-init-ass}. Of course, for $L^\infty$ 
solutions, it is sufficient that $A,R,B$ are merely "locally Lipschitz in $u$".

\medskip\noindent
In what follows, we mostly lay out the results and proofs 
in the context of weighted $L^p$ spaces. However, whenever 
relevant conditions are imposed on the "data" of 
the problem, cf.~\eqref{eq:scl-init-ass-new}, 
\eqref{eq:scl-def-noise-B-new}, and \eqref{eq:scl-flux-Lipschitz1-new}, 
the reader may set "$w_N\equiv 1$" in the stated results.
\end{remark}

We are going to rely on the (more precise) "kinetic" 
interpretation \cite{Perthame:2002qy} of the entropy 
inequalities \eqref{eq:scl-entropy-ineq}. 
The mapping $\chi:\R^2 \to \R$ defined by
\begin{equation*}
	\chi(\xi,u) 
	=
	\begin{cases}
		\En_{0<\xi<u}, & \text{if $u>0$}\\
		0, & \text{if $u=0$}\\
		-\En_{u<\xi<0} & \text{if $u<0$}\\
	\end{cases}	
\end{equation*}
is called a $\chi$ function. Notice that $\chi(\xi,u)
=\En_{\xi<u} - \En_{\xi<0}$ for a.e.~$\xi$, for each 
fixed $u\in \R$. Moreover, $\chi$ is compactly supported 
in the $\xi$-variable, and thus $\chi(\cdot,u)\in L^1(\R)$. 
For any locally Lipschitz continuous $h:\R\to\R$, we have the following 
representation formula:
\begin{equation*}
	h(u)=h(0) +\int_{\R} h'(\xi) \chi(\xi,u)\dxi,
	\qquad u\in \R.
\end{equation*}

We also need the "one-sided" $\chi$-functions 
$\chip(\xi,u)=\En_{\xi<u}$ and $\chim(\xi,u):=\chip(\xi,u)-1$
($=-\En_{\xi\ge u}$). Observe that 
$\chip(\xi,u)=\chi(\xi,u)+\En_{\xi<0}$ and 
$\chim(\xi,u)= \chi(\xi,u)-\En_{\xi\ge 0}$, 
for a.e.~$\xi$, for each fixed $u\in \R$. 
In contrast to $\chi$, the one-sided functions $\chipm(\cdot,u)$ 
are not compactly supported and thus not integrable on $\R$.  
In most applications, however, it is sufficient 
that $\chipm(\cdot,u)$ is in $L^1_\loc(\R)$, for 
each fixed $u\in \R$. 

\begin{remark}[properties of $\chip$]\label{eq:scl-chip-prop}
The following properties are easy to verify:
\begin{enumerate}
	\item $(u-v)_+=\int_{\R} \chip(\xi,u)(1-\chip(\xi,v))\dxi$;
	
	\item $\int_{\R} S'(\xi)\chip(\xi,u)(1-\chip(\xi,v))\dxi
	=\En_{u>v}\left(S(u)-S(v)\right)$, $\forall S\in \liploc(\R)$;
		
	\item $\abs{u-v}=\int_{\R} \abs{\chip(\xi,u)-\chip(\xi,v)}\dxi$;
	
	\item Set $g(\xi,u,v)=\frac12\left(\chip(\xi,u)+\chip(\xi,v)\right)$. 
	Then $\frac14\abs{u-v} =\int_{\R} g-g^2 \dxi$.
\end{enumerate}
\end{remark}

Let us introduce the following notations for further use:
\begin{align*}
	& a_i=a_i(t,x,\xi):=(\partial_u A_i)(t,x,\xi), 
	\quad i=1,\ldots,d,
	\\ & 
	a=\left(a_1,\ldots,a_d\right),
	\quad
	d=d(t,x,\xi):=-(\Div_x A)(t,x,\xi),
\end{align*}
$\overline{a}=\overline{a}(t,x,\xi)= \seq{a,d}$ \cite{Da2}, 
and note that $\Div_{(x,\xi)} \overline{a}:=\Div_x a
+\partial_\xi d=0$. In view of our 
assumptions \eqref{eq:scl-flux-Lipschitz1}, 
\eqref{eq:scl-flux-divx}, and \eqref{eq:scl-reaction-Lipschitz1}, 
we clearly have 
\begin{equation}\label{eq:scl-a-growth}
	\norm{\frac{a(t,x,\xi)}{1+\abs{x}}}_{L^\infty_x}
	\le m_a(t),	\qquad (\omega,t,\xi)\in \Omega\times[0,T]\times\R,
\end{equation}
\begin{equation}\label{eq:scl-d-growth}
	\norm{d(t,x,\xi)}_{L^\infty_x}
	\le m_d(t) \left(1+\abs{\xi}\right),	
	\qquad (\omega,t,\xi)\in \Omega\times[0,T]\times\R,
\end{equation}
and
\begin{equation}\label{eq:scl-R-growth}
	\norm{R(t,x,\xi)}_{L^\infty_x}
	\le m_R(t) \left(1+\abs{\xi}\right),
	\quad 
	\norm{\pxi R(\omega,t,x,\xi)}_{L^\infty_x}
	\le m_R(t),
\end{equation}
for $(\omega,t,\xi)\in \Omega\times[0,T]\times\R$. 
These estimates imply, a.s., $\overline{a},\, 
R \in L^1\left(0,T;L^1_{\loc}(\R^d\times\R)\right)$. 
Besides, we will always assume
\begin{equation}\label{eq:scl-a-d-R-Sobolev}
	\Grad_{(x,\xi)} \overline{a},\, \Grad_{x} R
	\in L^1\left(0,T;L^1_{\loc}(\R^d\times\R)\right)
	\quad \text{a.s.},
\end{equation}
and so, a.s., $\overline{a},\, R \in L^1\left(0,T;
W^{1,1}_{\loc}(\R^d\times\R)\right)$ (for 
the DiPerna-Lions regularization lemma). 

Setting 
$$
\rho=\rho(\omega,t,x):=\chi_+(\xi,u(\omega,t,x))
=\En_{\xi<u(\omega,t,x)},
$$
the kinetic equation reads
\begin{equation}\label{eq:scl-kinetic-eqn}
	\begin{split}
		\pt \rho &+\Div_{(x,\xi)}\Bigl(\overline{a}\rho\Bigr)
		+R \partial_\xi \rho
		\\ & 
		+\sum_{k\ge 1}b_k \partial_\xi \rho\, \dotW_k(t)
		=  \pxi \left(\frac{B^2}{2}\pxi \rho \right)+\partial_\xi m
		\quad \text{in $\Dp([0,T)\times\R^d\times\R)$, a.s.},
	\end{split}
\end{equation}
where $\overline{a}:=\set{a,d}$ 
satisfies $\Div_{(x,\xi)}\overline{a}=0$, 
$B^2$ is defined in \eqref{eq:scl-def-noise-B}, 
and $\partial_\xi \rho=-\delta(\xi-u)$. All the coefficients 
$\overline{a},R,b_k, B^2$ depend on $(t,x,\xi)$. 
On the right-hand side of \eqref{eq:scl-kinetic-eqn}, 
$m$ is the so-called \textit{kinetic measure}.

\begin{remark}\label{rem:our-eqs}
Observe that the stochastic kinetic equations 
\eqref{e2.19} and \eqref{e2.22'}, which arise in 
our first homogenization problem, are both 
of the type \eqref{eq:scl-kinetic-eqn}.
On the other hand, the kinetic equations \eqref{e1.12v} 
and \eqref{e1.16iv} (arising in the second homogenization 
problem) are not, see also Remark \ref{rem:our-eqs}. 
However, combining the arguments 
developed in this section with those used 
in the proof of Lemma \ref{l3.1}, we can also
handle this (new) type of stochastic kinetic equations.
\end{remark}

\begin{definition}[kinetic measure]\label{def:kinetic-measure}
A nonnegative mapping $m:\Omega \to \radon([0,T]\times \R^d\times\R)$ 
is called a (weighted) kinetic measure provided the following three 
conditions hold:

\begin{enumerate}
\item $m(\phi):\Omega\to\R$ is measurable for each 
$\phi\in C_c([0,T]\times \R^d\times\R)$, where $m(\phi)$ 
denotes the action of $m$ on $\phi$, i.e., $m(\phi)=
\int_{[0,T]\times \R^d\times \R} \phi(t,x,\xi)\, m(dt,dx,d\xi)$;

\item the process $(\omega,t)\mapsto m(\phi)([0,t]\times\R^d\times\R)
=\int_{[0,t]\times M\times\R}\phi(x,\xi)\, m(ds,dx,d\xi)$ 
is predictable and belongs to $L^2(\Omega\times [0,T])$, 
for any $\phi\in C_c(\R^d\times\R)$;

\item $m$ exhibits weighted $p$--moments: $m_N:=w_Nm$, 
cf.~\eqref{eq:scl-weight-def}, satisfies
\begin{equation}\label{eq:pmoment-weight}
	\EE \int_{[0,T]\times \R^d\times \R} 
	\abs{\xi}^p \, m_N(dt,dx,d\xi)
	\lesssim_{T,N,p} 1, 
	\qquad \forall p\in [0,\infty).
\end{equation}
\end{enumerate}
\end{definition}

\begin{definition}[kinetic solution]\label{def:scl-kineticsol}
Given an initial function $u_0\in L^\infty\left(\Omega,\mathcal{F}_0;L^\infty(\R^d)\right)$, 
set $\rho_0:=\En_{\xi<u_0}$. A measurable function 
$u:\Omega\times [0,T]\times \R^d\to \R$ is said to be 
a \textit{kinetic solution} of \eqref{eq:scl} if 
$u$ is a predictable $L^2(w_Ndx)$--valued stochastic process such that  
\begin{equation}\label{eq:kinetic-Lp-bound}
	\EE\left(\esup_{t\in [0,T]} 
	\norm{u(t)}^p_{L^p(w_Ndx)}\right)\lesssim_{T,N,p} 1, 
	\qquad \forall p\in [2,\infty),
\end{equation}
and there is a kinetic measure $m$ such that 
$\rho:=\En_{\xi<u}$ satisfies \eqref{eq:scl-kinetic-eqn}.
\end{definition}

\begin{remark}
The property $\partial_\xi \rho = -\delta(\xi-u)$ 
is satisfied by any kinetic solution $\rho$ (and thus $\rho\in BV_\xi$). 
Given a function $H=H(t,x,\xi)$ that is continuous in $\xi$, we assign 
the following meaning to the distribution $H\pxi \rho$:
\begin{equation*}
	\action{H\pxi \rho}{\phi}_{\Dp_{\xi},\D_{\xi}}
	=-H(t,x,u(\omega,t,x))\phi(t,x,u(\omega,t,x)),
	\qquad \phi\in \D_{t,x,\xi},
\end{equation*}
for a.e.~$(\omega,t,x)\in \Omega\times [0,T]\times\R^d$, thereby explaining 
the meaning of \eqref{eq:scl-kinetic-eqn}.
\end{remark}

\begin{remark}[entropy \& kinetic solutions]
It is equivalent to be a kinetic solution according to 
Definition \ref{def:scl-kineticsol} and an entropy solution, 
i.e., a weak solution of \eqref{eq:scl} satisfying \eqref{eq:scl-entropy-ineq}.	
\end{remark}

\begin{remark}[weighted $p$--moments of kinetic measure]\label{rem:weighted-pmoments}
Fix a kinetic solution $\rho$ with kinetic measure $m$. 
For later use, let us compute the $p$-moments of the weighted measure 
$m_N:=w_N m$, where $w_N$ is the weight 
function \eqref{eq:scl-weight-def}. 
It follows from \eqref{eq:scl-kinetic-eqn} that
\begin{equation}\label{eq:scl-m-eqn}
	\begin{split}
		m(\partial_\xi \varphi)([0,T])
		&=\int_{[0,T]\times \R^d\times \R} 
		\pxi \varphi(x,\xi)\, m(dt,dx,d\xi)
		\\ & =\action{\chi_0}{\varphi}
		-\action{\chi(T)}{\varphi}
		+\int_0^T\action{\rho(t)}{\overline{a}(t)
		\cdot \Grad_{(x,\xi)} \varphi}\,dt
		\\ & \qquad 
		-\int_0^T\action{\bigl(R\pxi \rho\bigr)(t)}{\varphi}\,dt
		-\sum_{k\ge 1}\int_0^T
		\action{\bigl(b_k\pxi \rho\bigr)(t)}{\varphi}\,dW_k(t)
		\\ & \qquad\qquad 
		-\int_0^T\action{\left(\frac{B^2}{2}
		\pxi \rho\right)(t)}{\pxi \varphi}\,dt,
		\qquad 
		\forall \varphi \in C^\infty_c(\R^d\times\R),
	\end{split}
\end{equation}
where $\chi:=\rho-\En_{\xi<0}$ and $\chi_0:=\rho_0-\En_{\xi<0}$. 
Fix any convex function $S\in C^2(\R)$ with 
$\abs{S(\xi)} \lesssim \abs{\xi}^{p+2}$, 
$\abs{S'(\xi)} \lesssim \abs{\xi}^{p+1}$, 
$\abs{S''(\xi)} \lesssim \abs{\xi}^p$ ($p\ge 0$), 
i.e., $S\in C^2_{\text{pol}}(\R)$. We will utilize the test function 
$\varphi=\varphi_{\kappa,\ell}(x,\xi):=S'(\xi)\, w_N(x)
\, \phi_\kappa(x)\psi_\ell(\xi) 
\overset{\kappa,\ell\uparrow \infty}{\longrightarrow}
S'(\xi)\, w_N(x)$, where $\phi_\kappa(x)=\phi_1\left(\frac{x}{\kappa}\right)$, 
$\phi_1\in C^\infty_c(\R^d)$, $0\le \phi_1\le 1$, 
$\phi_1=1$ on $\set{\abs{x}\le 1}$, and 
$\phi_1=0$ on $\set{\abs{x}\ge 2}$. 
Moreover, $\psi_\ell(x)=\psi_1\left(\frac{\xi}{\ell}\right)$, 
$\psi_1\in C^\infty_c(\R^d)$, $0\le \psi_1\le 1$, 
$\psi_1=1$ on $\set{\abs{\xi}\le 1}$, 
and $\psi_1=0$ on $\set{\abs{\xi}\ge 2}$. 
We refer to $\seq{\phi_\kappa(x)}_{\kappa\ge1}$, 
and $\seq{\psi_\ell(x)}_{\ell\ge1}$ as truncation 
sequences (on, respectively, $\R^d$ and $\R$). 
Clearly, $\abs{\Grad\phi_\kappa(x)}\lesssim 
\frac{1}{\kappa}\En_{\kappa\le \abs{x}\le 2\kappa}$, 
$\abs{\psi_\ell'(\xi)}\lesssim \frac{1}{\ell}
\En_{\ell\le \abs{\xi}\le 2\ell}$, and
\begin{align*}
	& \pxi \varphi_{\kappa,\ell}
	= S''(\xi)w_N(x)\phi_\kappa(x)\psi_\ell(\xi) + 
	S'(\xi) w_N(x) \phi_\kappa(x)\psi_\ell'(\xi)
	\overset{\kappa,\ell\uparrow \infty}{\longrightarrow} 
	S''(\xi) w_N(x),
	\\ & 
	\Grad_x \varphi_{\kappa,\ell}
	= S'(\xi) \Grad w_N(x)\phi_\kappa(x)\psi_\ell(\xi) 
	+ S'(\xi) w_N(x) \Grad \phi_\kappa(x)\psi_\ell(\xi)
	\overset{\kappa,\ell\uparrow \infty}{\longrightarrow} 
	S'(\xi) \Grad w_N(x).
\end{align*}
Making use of $\varphi_{\kappa,\ell}$ in 
\eqref{eq:scl-m-eqn} and sending $\kappa,\ell\to \infty$, 
we eventually arrive at the following equation
satisfied a.s.~by the weighted kinetic measure $m_N$ ($=w_Nm$):
\begin{equation}\label{eq:p-moment-eqn}
	\begin{split}
		& \int_{[0,T]\times \R^d\times \R} S''(\xi) \, m_N(dt,dx,d\xi) 
		= \int_{\R^d} S(u_0) \, w_N dx
		- \int_{\R^d} S(u(T)) \, w_N dx
		\\ & \qquad 
		+\int_0^T \int_{\R^d} \Biggl(
		-2N\frac{Q_S(t,x,u)\cdot x}{1+\abs{x}^2}+
		(\Div_x Q_S)(t,x,u)
		\\ & \qquad\qquad\qquad\qquad\quad
		+S'(u)\left(R(t,x,u)
		-(\Div_x A)(t,x,u)\right)\Biggr)\, w_N dx\,dt
		\\ & \qquad
		+\sum_{k\ge1}\int_0^T\int_{\R^d}
		S'(u) b_k(t,x,u)\, w_N dx\,dW_k(t)
		\\ & \qquad
		+\frac12 \int_0^T \int_{\R^d} S''(u) B^2(t,x,u)
		\, w_N dx\,dt,
	\end{split}
\end{equation}
for any $S\in C^2_{\text{pol}}(\R)$, $S(0)=0$, $S''\ge 0$. Keeping 
in mind our assumptions \eqref{eq:scl-flux-Lipschitz1}, 
\eqref{eq:scl-flux-divx}, \eqref{eq:scl-reaction-Lipschitz1}, 
\eqref{eq:scl-def-noise-B}, and \eqref{eq:kinetic-Lp-bound}, 
choosing $S(\xi)=\frac{1}{(p+1)(p+2)}\abs{\xi}^{p+2}$ in 
\eqref{eq:p-moment-eqn} gives
\begin{equation}\label{eq:scl-mN-pbound}
	\EE\int_{[0,T]\times \R^d\times \R} 
	\abs{\xi}^p \, m_N(dt,dx,d\xi)
	\le C, \qquad p\in [0,\infty),
\end{equation}
where $C$ depends on $T,N$ and $\norm{u}_{L^{p+2}\left(\Omega;
L^\infty\left(0,T;L^{p+2}(w_Ndx)\right)\right)}$ (see also next remark).

\medskip\noindent
Regarding the "weight-free" $L^p$--framework 
discussed in Remark \ref{rem:drop-the-weight}, 
cf.~\eqref{eq:scl-init-ass-new}, 
\eqref{eq:scl-def-noise-B-new}, and \eqref{eq:scl-flux-Lipschitz1-new}, 
the equation \eqref{eq:p-moment-eqn} continues to hold with 
$w_N\equiv 1$ (and thus $m_N=m$), in which case 
the "$-2N$" term is zero. As a result, 
$\EE \int_{[0,T]\times \R^d\times \R} 
\abs{\xi}^p \, m(dt,dx,d\xi)\le C$, where 
$C$ depends on $T$ and $\norm{u}_{L^{p+2}\left(\Omega;
L^\infty\left(0,T;L^{p+2}(\R^d)\right)\right)}$.

\medskip\noindent
For $L^\infty$--solutions, the bound \eqref{eq:scl-mN-pbound} 
on $m_N$ continues to hold with $C$ depending on $T,N$, and 
$K_\text{max}:=\norm{u}_{L^\infty_{\omega,t,x}}$.
If $R-(\Div_x A)$, $b_k$, $B^2$ are zero on $\R_\xi\setminus 
\left[-K_\text{max},K_\text{max}\right]$, it 
follows from \eqref{eq:p-moment-eqn} that the weighted 
kinetic measure $m_N$ is compactly supported in $\xi$.
\end{remark}

\begin{remark}[improvement of integrability via a martingale argument]\label{remark:improv-integrability}
By the previous remark, the random variable 
$\omega\mapsto \int_{[0,T]\times \R^d\times \R} 
\abs{\xi}^p \, m_N(dt,dx,d\xi)$ belongs to $L^1(\Omega)$. 
One can improve this to $L^q(\Omega)$ for any finite $q\ge 1$. 
To this end, we will argue that
\begin{equation*}
	\begin{split}
		&\EE\left(\esup_{t\in [0,T]}\norm{u(t)}_{L^{p+2}(w_Ndx)}^r\right)+
		\quad 
		\EE\left(\int_{[0,T]\times \R^d\times \R}
		\abs{\xi}^p \, m_N(dt,dx,d\xi)\right)^{\frac{r}{p+2}}
		\lesssim_{r,T,N} 1, 
	\end{split}
\end{equation*}
provided the initial data $u_0$ satisfy
$\EE \left(\norm{u_0}_{L^{p+2}(w_Ndx)}^r\right)<\infty$, 
for $r>p+2$, a condition that clearly is satisfied due to 
\eqref{eq:scl-init-ass}. The case $r=p+2$ is covered by the definition of 
kinetic solution, cf.~\eqref{eq:pmoment-weight} 
and \eqref{eq:kinetic-Lp-bound}. In view of \eqref{eq:p-moment-eqn} 
with $S(\xi)=\frac{1}{(p+1)(p+2)}\abs{\xi}^{p+2}$  
and the growth assumptions \eqref{eq:scl-a-growth},
\eqref{eq:scl-d-growth}, and \eqref{eq:scl-R-growth}, 
it follows easily that
\begin{equation}\label{eq:p-moment-eqn2}
	\begin{split}
		&\esup_{t\in [0,T]} \int_{\R^d} \abs{u(t)}^{p+2} \, w_N dx 
		+\int_{[0,T]\times \R^d\times \R}\abs{\xi}^p \, m_N(dt,dx,d\xi)
		\\ & \qquad \lesssim 
		\int_{\R^d} \abs{u_0}^{p+2} \, w_N dx
		+\int_0^T\int_{\R^d} \abs{u(t)}^{p+2} \,w_N dx \,dt
		+\sup_{t\in [0,T]}\abs{M(t)},
	\end{split}
\end{equation}
for a.e.~$(\omega,t)\in\Omega\times[0,T]$, where
\begin{align*}
	M(t)=\sum_{k\ge1}\int_0^t\int_{\R^d}
	S'(u) b_k(\omega,s,x,u)\, w_N dx\,dW_k(s),
	\quad 
	S'(u)=\frac{1}{p+1}\abs{u}^p u.
\end{align*}
We raise both sides of \eqref{eq:p-moment-eqn2} to the 
power $r/(p+2)>1$, apply Jensen's inequality to the 
second term on the right-hand side, and 
take the expectation, eventually arriving at
\begin{equation}\label{eq:p-moment-eqn3}
	\begin{split}
		&\EE \left(\esup_{t\in [0,T]}
		\norm{u(t)}_{L^{p+2}(w_Ndx)}^r\right)
		+\EE\left(\int_{[0,T]\times \R^d\times \R}
		\abs{\xi}^p \, m_N(dt,dx,d\xi)\right)^{\frac{r}{p+2}}
		\\ & \quad 
		\lesssim_T \EE \left(\norm{u_0}_{L^{p+2}(w_Ndx)}^r\right)
		+\int_0^T \EE \left(\norm{u(t)}_{L^{p+2}(w_Ndx)}^r\right) \,dt
		\\ & \quad\quad\qquad
		+\EE \sup_{t\in [0,T]}\abs{M(t)}^{\frac{r}{p+2}}.
	\end{split}
\end{equation}
A standard martingale argument (Burkholder-Davis-Gundy 
inequality \cite{BDG})  supplies
\begin{align*}
	& \EE \sup_{t\in [0,T]}\abs{M(t)}^{\frac{r}{p+2}}
	\\ & \quad \lesssim_{T,N}\frac12 \EE \left( \esup_{t\in [0,T]} 
	\norm{u(t)}_{L^{p+2}(w_Ndx)}^r\right)
	+\int_0^T \EE \left(\norm{u(t)}_{L^{p+2}(w_Ndx)}^r\right) \,dt+1.
\end{align*}
Making use of this estimate in \eqref{eq:p-moment-eqn3}, followed by 
an application of Gronwall's inequality, leads to the 
sought after estimates. 

\medskip

It is easy to make the previous argument operational 
in the "weight-free" $L^p$--framework discussed in 
Remark \ref{rem:drop-the-weight}, assuming 
\eqref{eq:scl-init-ass-new}, \eqref{eq:scl-def-noise-B-new}, 
\eqref{eq:scl-flux-Lipschitz1-new}. 
The same applies to $L^\infty$--solutions.
\end{remark}

Roughly speaking, the difference between a kinetic solution 
$\rho$ and a so-called \textit{generalized} kinetic 
solution $\varrho$ is that the structural property 
$\partial_\xi \rho = -\delta(\xi-u)$ is 
replaced by the requirement $\partial_\xi \varrho=-\nu$ for some 
Young measure $\nu$ on $\R_\xi$. 
We refer to \cite{DV} for relevant background 
material on Young measures.

In what follows, any function of the form 
$\rho=\rho(z,\xi)=\En_{\xi<u(z)}$ will be called a 
\textit{kinetic function}.  We reserve the term 
\textit{generalized kinetic function} to
functions $\varrho=\varrho(z,\xi)$ taking values in $[0,1]$ such 
that $-\pxi \varrho$ is a Young measure. For us $z=(\omega,x)$ 
or $z=(\omega,t,x)$.

\begin{definition}[generalized kinetic solution]
Fix a generalized kinetic function $\varrho_0(\omega,x,\xi)$. 
We call $\varrho:\Omega\times [0,T]\times \R^d\times \R\to [0,1]$ 
a \textit{generalized kinetic solution} of \eqref{eq:scl} 
with initial data $\rho_0$ if $\widetilde \varrho:=\varrho-\En_{\xi<0}$ 
is $\mathcal{P}/\mathcal{B}(L^2(w_N dx \, d\xi))$ 
measurable and
\begin{equation}\label{eq:kinetic-Lp-bound-Young}
	\EE\left(\esup_{t\in[0,T]}
	\iint_{\R^d\times \R}\abs{\xi}^p
	w_N(x)\, \nu_{\omega,t,x}(d\xi) \dx\right)\lesssim_{T,N,p} 1, 
	\qquad 
	\forall p\in [2,\infty),
\end{equation}
where $\nu:=-\partial_\xi\varrho$ is a Young measure, 
the spatial weight $w_N$ is defined in \eqref{eq:scl-weight-def}, 
and there is a kinetic measure $m$ such 
that $\varrho$ satisfies a.s.
\begin{equation}\label{eq:scl-gen-kinetic-eqn}
	\begin{split}
		\pt \varrho &+\Div_{(x,\xi)}\Bigl(\overline{a}\varrho\Bigr)
		+R \partial_\xi \rho
		\\ & 
		+\sum_{k\ge 1}b_k \partial_\xi \varrho\, \dotW_k(t)
		=  \pxi \left(\frac{B^2}{2}\pxi \varrho \right)+\partial_\xi m
		\quad \text{in $\Dp([0,T)\times\R^d\times\R)$}.
	\end{split}
\end{equation}
\end{definition}

\begin{remark}
Given a function $H(t,x,\xi)$ that is 
continuous in $\xi$ and a generalized kinetic solution $\varrho$, 
we assign the following meaning to the distribution $H\pxi \varrho$:
\begin{equation*}
	\action{H\pxi \varrho}{\phi}_{\Dp_{\xi},\D_{\xi}}
	=-\int_{\R}
	H(\omega,t,x,\xi)\phi(t,x,\xi) \, \nu_{\omega,t,x}(d\xi),
	\qquad \phi\in \D_{t,x,\xi},
\end{equation*}
for a.e.~$(\omega,t,x)\in \Omega\times [0,T]\times\R^d$, 
thereby making precise the meaning of \eqref{eq:scl-gen-kinetic-eqn}.
\end{remark}

\begin{remark}
Although a generalized kinetic solution $\varrho$ 
is merely locally integrable in $\xi$, the associated 
function $\widetilde\varrho$ ($=\varrho-\En_{\xi<0}$) 
is globally integrable; by \eqref{eq:kinetic-Lp-bound-Young},
\begin{equation*}
	\iint_{\R^d\times \R} 
	\abs{\widetilde \varrho(t)}
	\abs{\xi}^p\,w_N(x)
	\, \dxi\dx\lesssim_{T,N,p} 1,	
	\quad t\in [0,T],
	\quad \forall p\in [1,\infty).
\end{equation*}
\end{remark}

\begin{remark}[c{\`a}dl{\`a}g / c{\`a}gl{\`a}d versions]
There are general theorems \cite{RY} ensuring that many 
real-valued stochastic processes $X(t)$ (discontinuous 
semimartingales) have a right-continuous version and, what's 
more, these versions necessarily have left-limits everywhere. 
Right-continuous processes with left-limits everywhere are referred to 
as \textit{c{\`a}dl{\`a}g}. Left-continuous processes with 
right-limits everywhere are referred to as \textit{c{\`a}gl{\`a}d}.

A generalized kinetic solution $\varrho$ is clearly not affected 
by modification of its values on any set of measure zero. 
In fact, $\varrho$ is an equivalence class of functions.
When proving stability and uniqueness results we must 
work with left/right continuous representatives of each 
equivalence class. Indeed, a result from \cite[Proposition 10]{DV} 
(see also \cite[Lemma 1.3.3]{Dotti:2018aa}), easily generalized 
to our setting, says that a generalized kinetic solution $\varrho$
possesses weak left and right limits 
$\varrho^{t,\pm}$ at every instant of time $t$. 
We then introduce left and right continuous representatives of $\varrho$ 
by setting $\varrho^\pm(t):=\varrho^{t,\pm}$ for all $t\in[0,T]$. 
Clearly, $\varrho^{\pm}$ are both predictable 
since $\varrho$ is. Using the left and right continuous representatives 
$\varrho^{\pm}$ one can convert the time-space weak 
formulation \eqref{eq:scl-gen-kinetic-eqn} 
into a formulation that is weak in space only 
(and pointwise in time): for \textit{any} $t\in [0,T]$, a.s.,
\begin{equation}\label{eq:scl-weak-in-space}
	\begin{split}
		\action{\varrho^{\pm}(t)}{\varphi}
		& =\action{\varrho_0}{\varphi}
		+\int_0^t\action{\varrho(s)}{\overline{a}(s)
		\cdot \Grad_{(x,\xi)} \varphi}\,ds
		-\int_0^t\action{\bigl(R\pxi \varrho\bigr)(s)}{\varphi}\,ds
		\\ & \qquad 
		-\sum_{k\ge 1}\int_0^t
		\action{\bigl(b_k\pxi \varrho\bigr)(s)}{\varphi}\,dW_k(s)
		-\int_0^t
		\action{\left(\frac{B^2}{2}\pxi \varrho\right)(s)}{\pxi \varphi}\,ds
		\\ & \qquad \qquad 
		-
		\begin{cases}
			m(\partial_\xi \varphi)([0,t]), 
			& \text{for $\varrho^+$}\\
			m(\partial_\xi \varphi)([0,t)), 
			& \text{for $\varrho^-$}
 		\end{cases}.
 	\end{split}
\end{equation}
Be mindful of the fact that $\action{\varrho^+(t)-\varrho^-(t)}{\varphi}
=-m(\partial_\xi \varphi)(\set{t})$. Since the atomic points of 
$m(\partial_\xi \varphi)(\cdot)$ is at most countable, 
we have $\action{\varrho^+(t)}{\varphi}=\action{\varrho^-(t)}{\varphi}$ 
for a.e.~$t$ and in turn $\varrho^+=\varrho^-$ almost everywhere. 
The real-valued stochastic processes 
$X^\pm (t):= \action{\varrho^{\pm}(t)}{\varphi}$, defined 
by \eqref{eq:scl-weak-in-space},  are of the form 
$X^\pm(t)=A^\pm(t)+M(t)$, where $A^\pm(t)$ 
are finite variation processes and 
$M(t)$ is a continuous martingale. 
Moreover, $A^+(0)=\action{\varrho_0}{\varphi}
-m(\partial_\xi \varphi)(\set{0})$, 
$A^-(0)=\action{\varrho_0}{\varphi}$, and $M(0)=0$. 
Below we note that $m(\partial_\xi \varphi)(\set{0})=0$ 
for kinetic initial data $\varrho_0=\En_{\xi<u_0}$.
Whenever convenient, we may assume that $X^+$ ($X^-)$ 
are c{\`a}dl{\`a}g (c{\`a}gl{\`a}d).

\medskip \noindent
In what follows, we will outline a proof of uniqueness. 
Although we should work with the left/right continuous 
representatives $\varrho^{\pm}$ as 
in \cite[Proposition 10]{DV} (see also \cite{Dotti:2018aa}) and 
make use of the space-weak formulation \eqref{eq:scl-weak-in-space}, 
we will not do so in an attempt to save space and keep the 
presentation as simple as possible. Instead we refer 
to \cite{DV,DHV,Dotti:2018aa,Galimberti:2018aa,Gess:2018ab,Ha} 
for such details, see also \cite{Gess:2018aa,Gess:2019aa}.
\end{remark}

\begin{remark}\label{rem:init-measure}
Let us make a comment on generalized kinetic solutions 
and the satisfaction of the initial condition. 
Suppose $\varrho_0=\En_{\xi<u_0}$ for some function 
$u_0$ satisfying \eqref{eq:scl-init-ass}. It follows 
from \eqref{eq:scl-weak-in-space} that (the right-continuous 
representative of) $\varrho$ satisfies a.s.
\begin{equation}\label{eq:scl-initial-measure}
	\action{\varrho(0)}{\varphi}
	=\action{\varrho_0}{\varphi}
	-m(\partial_\xi \varphi)(\set{0}),
	\qquad \forall \varphi\in C^\infty_c(\R^d\times \R).
\end{equation}
To conclude $\varrho(0)=\varrho_0$ we argue 
that $m\left(\set{0}\times \R^d\times \R\right)=0$. 
The argument is standard \cite{Perthame:2002qy}, so 
we merely sketch it. Following 
Remark \ref{rem:weighted-pmoments}, 
\eqref{eq:scl-initial-measure} implies a.s.~that
\begin{align*}
	\iint\limits_{\R^d\times\R} S'(\xi)
	\left(\widetilde \varrho(0) - \chi(\xi,u_0)\right) 
	\, w_N \dxi\dx
	+\int\limits_{\set{t=0}\times \R^d\times \R} 
	S''(\xi) \, w_N \, m(dt,dx,d\xi)=0,
\end{align*}
for any $S\in C^2(\R)$ for which $S''\ge 0$ 
and $S,S',S''$ grow at most polynomially. 
By Brenier's lemma \cite{Perthame:2002qy}, the first 
integral is nonnegative. As a result, both integrals  
must be zero. In other words, a.s., $\varrho(0)=\varrho_0$ 
and $m\left(\set{0}\times \R^d\times \R\right)=0$.
\end{remark}

Following an approach developed by Perthame \cite{Perthame:2002qy}, 
later extended to the stochastic case in \cite{DV} (see also 
\cite{DV,DHV,Dotti:2018aa,Galimberti:2018aa,Gess:2018ab,Ha,Kobayasi:2016aa,Lv:2016aa}), we establish a rigidity result implying that generalized 
kinetic solutions are in fact kinetic solutions, at least 
when the initial function is a kinetic function, 
$\varrho_0=\En_{\xi<u_0}$. The proof herein involves 
a regularization (via convolution) 
procedure, the It\^{o} formula, and commutator arguments 
(going beyond the deterministic one by DiPerna-Lions) \cite{GK}. 
Essentially the same proof also shows that kinetic 
solutions are uniquely determined by their initial data, 
satisfying an $L^1$ contraction principle. 

\begin{proposition}[rigidity result]\label{prop:scl-rigidity}
Suppose that $b_k,B^2,\overline{a}=\set{a,d},R$ satisfy conditions 
\eqref{eq:scl-def-noise-B}, \eqref{eq:scl-noise-bk-reg}, 
\eqref{eq:scl-a-growth}, \eqref{eq:scl-d-growth}, 
\eqref{eq:scl-R-growth}, \eqref{eq:scl-a-d-R-Sobolev}, 
and $\Div_{(x,\xi)} \overline{a}=0$. Let $\varrho$ be 
a generalized kinetic solution of \eqref{eq:scl} with initial data $\varrho_0$. 
Suppose $m\left(\set{0}\times \R^d\times \R\right)=0$. 
Then, for $t\in [0,T]$,
\begin{equation}\label{eq:scl-contraction-rhomrho2}
	0\le \EE \iint_{\R^d\times\R} 
	\left( \varrho-\varrho^2\right) (t) \, w_N\,d\xi \,dx
	\lesssim_{T,N} \EE\iint_{\R^d\times\R}
	\left(\rho_0-\rho_0^2\right)\, w_N \,d\xi \,dx.
\end{equation}
If $\varrho_0=\En_{\xi<u_0}$ for some $u_0$ satisfying 
\eqref{eq:scl-init-ass}, then $m\left(\set{0}\times \R^d\times \R\right)=0$ 
and thus $\varrho-\varrho^2=0$ a.e.; whence $\varrho=\En_{\xi<u}$ 
for some function $u$ that necessarily 
is a kinetic solution of \eqref{eq:scl}. 
\end{proposition}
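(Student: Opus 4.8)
The plan is to run the kinetic rigidity argument of Perthame \cite{Perthame:2002qy}, in the stochastic form of Debussche--Vovelle \cite{DV}, adapted to the unbounded domain via the weight $w_N$. Throughout, $\iint$ means $\iint_{\R^d\times\R}$ and $w_N$ abbreviates $w_N(x)\,dx\,d\xi$. The left inequality in \eqref{eq:scl-contraction-rhomrho2} is immediate, since any generalized kinetic solution obeys $0\le\varrho\le1$, so $\varrho-\varrho^2=\varrho(1-\varrho)\ge0$. For the right inequality I would fix the concave cut-off $\beta(r):=r-r^2$ (thus $\beta'(r)=1-2r$ with $\abs{\beta'}\le1$, $\beta''\equiv-2\le0$, and $0\le\beta\le\tfrac14$ on $[0,1]$), record the pointwise bound $\beta(\varrho)\le\abs{\widetilde\varrho}$ with $\widetilde\varrho:=\varrho-\En_{\xi<0}$ (elementary, treating $\xi>0$ and $\xi<0$ separately), and note that by \eqref{eq:kinetic-Lp-bound-Young} the quantities $\EE\iint\abs{\xi}^p\beta(\varrho(t))\,w_N$ are finite for all $p$; this legitimizes all integrals below.

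Next I would \emph{regularize}: let $\varrho_\eps$ be the mollification of $\varrho$ in the $(x,\xi)$-variables only. Using $\Div_{(x,\xi)}\overline{a}=0$ to rewrite the transport term as $\overline{a}\cdot\Grad_{(x,\xi)}\varrho$, the function $\varrho_\eps$ solves, a.s.,
\[
	\pt\varrho_\eps+\overline{a}\cdot\Grad_{(x,\xi)}\varrho_\eps
	+R\,\pxi\varrho_\eps+\sum_{k\ge1}b_k\,\pxi\varrho_\eps\,\dotW_k(t)
	=\pxi\!\left(\tfrac{B^2}{2}\pxi\varrho_\eps\right)+\pxi m_\eps+\mathcal R_\eps,
\]
where $m_\eps\ge0$ is the mollified kinetic measure and $\mathcal R_\eps$ collects the regularization errors: a deterministic part from the commutators $[\overline{a}\cdot\Grad_{(x,\xi)},\,\cdot\,]\varrho$ and $[R\,\pxi,\,\cdot\,]\varrho$, tending to $0$ in $L^1(0,T;L^1_{\loc})$ a.s.\ by the DiPerna--Lions lemma \cite{DiPerna:1989aa} thanks to \eqref{eq:scl-a-d-R-Sobolev}; a first-order stochastic part $\sum_k[b_k\,\pxi,\,\cdot\,]\varrho\,\dotW_k$; and the second-order correction produced by mollifying the parabolic term $\pxi(\tfrac{B^2}{2}\pxi\varrho)$. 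Applying the It\^o formula to $\beta(\varrho_\eps)$ — legitimate pathwise in $(x,\xi)$, since $\varrho_\eps$ is smooth there — integrating against $w_N$, and taking expectation (the stochastic integral being a true martingale by \eqref{eq:kinetic-Lp-bound-Young} and the Burkholder--Davis--Gundy inequality \cite{BDG}), I obtain, for $t\in[0,T]$,
\[
	\EE\iint\beta(\varrho_\eps(t))w_N
	=\EE\iint\beta(\varrho_\eps(0))w_N
	+\mathcal A+\mathcal R+\mathcal P+\mathcal J+\mathcal M+\mathcal E ,
\]
where $\mathcal A=\int_0^t\EE\iint\beta(\varrho_\eps)\,\overline{a}\cdot\Grad_{(x,\xi)}w_N$ is the drift term, $\mathcal R$ the reaction term, $\mathcal P=-\int_0^t\EE\iint\beta''(\varrho_\eps)\tfrac{B^2}{2}(\pxi\varrho_\eps)^2w_N$ the parabolic term, $\mathcal J=\tfrac12\int_0^t\EE\iint\beta''(\varrho_\eps)B^2(\pxi\varrho_\eps)^2w_N$ the It\^o correction (its leading part), $\mathcal M=\int_0^t\EE\iint\beta'(\varrho_\eps)\,\pxi m_\eps\,w_N$ the measure term, and $\mathcal E$ gathers the contributions of $\mathcal R_\eps$ together with the commutator remainders inside $\mathcal J$.

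Now the signs. Since $\Grad_{(x,\xi)}w_N=(\Grad_xw_N,0)$, $\abs{a}\le m_a(t)(1+\abs{x})$ by \eqref{eq:scl-a-growth}, and $\abs{\Grad w_N}\lesssim w_N/(1+\abs{x})$, one gets $\mathcal A\lesssim\int_0^t m_a(s)\,\EE\iint\beta(\varrho_\eps)w_N\,ds$; writing $\beta'(\varrho_\eps)\pxi\varrho_\eps=\pxi\beta(\varrho_\eps)$, integrating by parts in $\xi$, and using \eqref{eq:scl-R-growth}, $\mathcal R\lesssim\int_0^t m_R(s)\,\EE\iint\beta(\varrho_\eps)w_N\,ds$. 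The measure term is $\mathcal M=2\int_0^t\EE\iint\pxi\varrho_\eps\,m_\eps\,w_N\le0$, because $-\pxi\varrho_\eps\ge0$ and $m_\eps,w_N\ge0$, so it can be discarded. Finally, $\mathcal P+\mathcal J=\int_0^t\EE\iint\bigl(-\tfrac12\beta''B^2+\tfrac12\beta''B^2\bigr)(\pxi\varrho_\eps)^2w_N=0$: \emph{the parabolic term and the It\^o correction cancel identically} — the structural heart of the kinetic method. Hence
\[
	\EE\iint\beta(\varrho_\eps(t))w_N\le\EE\iint\beta(\varrho_\eps(0))w_N
	+C\int_0^t\bigl(m_a(s)+m_R(s)\bigr)\EE\iint\beta(\varrho_\eps)w_N\,ds+\mathcal E .
\]
Letting $\eps\to0$ — so that $\varrho_\eps\to\varrho$ a.e.\ and in $L^1_{\loc}$, $\beta(\varrho_\eps)\to\beta(\varrho)$ dominated via the mollification of $\abs{\widetilde\varrho}$, and $\mathcal E\to0$ — and then applying Gronwall's inequality yields the right inequality of \eqref{eq:scl-contraction-rhomrho2} with $C_{T,N}=\exp\!\bigl(C\!\int_0^T(m_a+m_R)\bigr)$. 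If $\varrho_0=\En_{\xi<u_0}$, then $\rho_0-\rho_0^2=\En_{\xi<u_0}\En_{\xi\ge u_0}=0$ a.e., and, since $m(\set{0}\times\R^d\times\R)=0$ gives $\varrho(0)=\varrho_0$ (Remark~\ref{rem:init-measure}), the right-hand side vanishes; with $\varrho-\varrho^2\ge0$ this forces $\varrho(t)=\varrho(t)^2$ for a.e.~$(\omega,t,x,\xi)$. A $[0,1]$-valued function equal a.e.\ to its square is $\set{0,1}$-valued a.e.; since $-\pxi\varrho$ is a nonnegative measure, $\xi\mapsto\varrho(\omega,t,x,\xi)$ is nonincreasing with limits $1$ at $-\infty$ and $0$ at $+\infty$, hence equals $\En_{\xi<u(\omega,t,x)}$ with $u:=\int_\R\widetilde\varrho\,d\xi$, which is predictable and lies in $L^p(w_N\,dx)$ for all $p$ by \eqref{eq:kinetic-Lp-bound-Young}. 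Thus $\pxi\varrho=-\delta(\xi-u)$, \eqref{eq:scl-gen-kinetic-eqn} reduces to \eqref{eq:scl-kinetic-eqn}, and $\varrho=\En_{\xi<u}$ is a kinetic solution of \eqref{eq:scl}.

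The main obstacle is the rigorous control of $\mathcal E$, and in particular the justification of the cancellation $\mathcal P+\mathcal J=0$: the mollification does not commute with the product $b_k\pxi\varrho$, so bounding the mismatch inside the quadratic variation requires a genuine \emph{second-order} commutator estimate (cf.~\eqref{eq:Qreg}), resting on the refined regularity hypothesis \eqref{eq:scl-noise-bk-reg} on $\set{b_k}_{k\ge1}$ — with its modulus $\mu$ — rather than on the plain DiPerna--Lions lemma. This is the point where the noise and the second-order operator genuinely interact. By comparison, the weighted/unbounded-domain bookkeeping (controlling $\mathcal A$, $\mathcal R$, $\mathcal M$ through $w_N$ and \eqref{eq:scl-a-growth}--\eqref{eq:scl-R-growth} together with $\beta(\varrho)\le\abs{\widetilde\varrho}$), the martingale argument, and the adaptation of the first-order commutators to the weighted setting are routine.
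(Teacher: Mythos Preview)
Your proposal is correct and follows essentially the same approach as the paper: regularize in $(x,\xi)$, apply the It\^o chain rule to $\beta(\varrho_\eps)=\varrho_\eps-\varrho_\eps^2$, exploit the sign of the kinetic-measure term, observe the cancellation between the parabolic contribution and the It\^o correction, and close via Gronwall. You also correctly flag the genuine difficulty, namely that the cancellation $\mathcal P+\mathcal J=0$ holds only up to a commutator $\mathcal Q_{\eps}$ of the type \eqref{eq:Qreg}, whose vanishing as $\eps\to0$ requires the symmetrization trick and the modulus-of-continuity hypothesis \eqref{eq:scl-noise-bk-reg}.

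The only notable difference from the paper's argument is that the paper tests against $w_N(x)\phi_\kappa(x)\psi_\ell(\xi)$ with truncation sequences $\phi_\kappa,\psi_\ell$ and sends $\kappa,\ell\to\infty$, whereas you integrate directly against $w_N$. The truncations are what make the test function admissible in $C^\infty_c$ and produce the extra $O(1/\ell)$ terms (from $\psi_\ell'$ hitting the $d$-component of $\overline a$, the $B^2$-term, and the measure term). Your shortcut is justifiable a posteriori via the moment bounds you invoke, but in a fully rigorous write-up you would need to insert these truncations; otherwise the integration by parts in $\xi$ against $m_\eps$ and the handling of the $\xi$-component of the drift are formal.
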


\begin{remark}
Informally speaking, cf.~\eqref{eq:scl-weak-in-space}, we have 
$\varrho(t)=V(t)+M(t)$, where $V(t)$ is a finite variation 
process, $M(t)$ is a continuous martingale, and $\varrho(0)=V(0)$. 
In the proof below we need to determine the equation 
satisfied by $S(\varrho(t))$, where $S(\varrho)=\varrho-\varrho^2$. 
Noting that $(\varrho(t))^2=(V(t))^2+2V(t)M(t)+(M(t))^2$, we 
can calculate the first and second terms using 
standard calculus, while the third term can be computed 
using the It\^{o} formula for continuous martingales \cite{RY}. 
Alternatively, we use the It\^{o} formula 
for discontinuous semimartingales \cite{Jacod:2003aa} to write
$S(\varrho(t))=S(\varrho(0))
+\int_0^t S'(\varrho(s-))\, d\varrho(s) +Q_S(t)+J_S(t)$, 
where $Q_S(t)=\int_0^t \frac12 S''(\varrho(s-))
\, d[\varrho](s)$, $[\varrho](t)=[M](t)+\sum_{s\le t} 
\left(\Delta\varrho(s)\right)^2$ is the quadratic variation process, and 
{\small $J_S(t)=\sum_{s\le t} \left( S(\varrho(s))
-S(\varrho(s-))-S'(\varrho(s-)) \Delta \varrho(s) -
\frac12 S''(\varrho(s-)) \left(\Delta \varrho(s)\right)^2 \right)$} 
is the "jump part" coming from the (temporal) discontinuities in $\varrho$. 
With $S(\varrho)=\varrho-\varrho^2$ (and $S''=-2$),
we have $J_S\equiv 0$ and $Q_S(t)=-[M](t)-\sum_{s\le t} 
\left(\Delta\varrho(s)\right)^2\le -[M](t)$.
\end{remark}

\begin{proof}
We will first give an informal proof 
of \eqref{eq:scl-contraction-rhomrho2}. 
Recall that $\varrho$ satisfies a.s.~\eqref{eq:scl-gen-kinetic-eqn}. 
By the It\^{o} and classical chain rules we arrive 
at the following equation for $S(\varrho):=\varrho-\varrho^2$:
\begin{equation}\label{eq:scl-S(varrho)}
	\begin{split}
		\pt S(\varrho) &+\Div_{(x,\xi)}\Bigl(\overline{a}S(\varrho)\Bigr)
		+R \partial_\xi S(\rho)
		\\ & 
		+\sum_{k\ge 1}b_k \partial_\xi S(\varrho)\, \dotW_k(t)
		=  \pxi \left(\frac{B^2}{2}\pxi S(\varrho) \right)
		+S'(\varrho)\partial_\xi m+\mathcal{Q},
	\end{split}
\end{equation}
where $\mathcal{Q}$ contains the difference between certain 
quadratic terms linked to the variation of 
the martingale part and the second-order 
differential operator of the 
equation \eqref{eq:scl-gen-kinetic-eqn}:
\begin{align*}
	\mathcal{Q} & =
	\frac{S''(\varrho)}{2} \sum_{k\ge 1}
	\left(b_k \partial_\xi \varrho \right)^2 
	- \frac{S''(\varrho)}{2}B^2
	\left(\partial_\xi \varrho \right)^2
	\equiv 0.
\end{align*}
The perfect cancellation (i.e., $Q=0$) is the 
basic reason why the Proposition \ref{prop:scl-rigidity} holds.	 
It follows from \eqref{eq:scl-S(varrho)} that
$I(\phi)= I_0(\varphi)+\sum_{i=1}^4 I_i(\varphi)$, 
$t\in [0,T]$, where
\begin{align*} 
	& I(\varphi)=\EE \iint_{\R^d\times\R} S(\varrho(t)) 
	\varphi \,d\xi \,dx,
	\quad
	I_0(\varphi)=\EE\iint_{\R^d\times\R} S(\varrho_0) 
	\varphi \,d\xi \,dx,
	\\ & I_1(\varphi) = \int_0^t \left(\EE \iint_{\R^d\times\R} 
	S(\varrho(s)) \overline{a}(s)\cdot \Grad_{(x,\xi)}
	\varphi \,d\xi \,dx\right)\, ds,
	\\ & I_2(\varphi)=-\frac12 \int_0^t \left(\EE \iint_{\R^d\times\R} 
	B^2(s)\pxi S(\varrho(s)) \pxi \varphi \,d\xi \,dx\right)\, ds,
	\\ & 
	I_3(\varphi) = - \int_0^t \left(\EE \iint_{\R^d\times\R} 
	R(s)\pxi S(\varrho(s)) \varphi \,d\xi \,dx
	\right)\, ds,
	\\ & 
	I_4(\varphi) =-\EE \iiint_{[0,t]\times \R^d\times\R} 
	\pxi \left(S'(\varrho(s))\varphi\right) \,m(ds,dx,d\xi),
\end{align*}
for any $\phi \in C^1_c(\R^d\times \R)$. 
Let us particularize the test function as
\begin{equation}\label{eq:scl-testfunc-kl}
	\varphi(x,\xi)=\varphi_{\kappa,\ell}(x,\xi)
	=w_N(x) \phi_\kappa(x)\psi_\ell(\xi), 
\end{equation}
where the weight function $w_N$ is 
defined in \eqref{eq:scl-weight-def} 
and $\seq{\phi_\kappa}_{\kappa\ge 1}$, 
$\seq{\psi_\ell}_{\ell\ge 1}$ are truncation sequences 
respectively on $\R^d$, $\R$. 	

We rely on \eqref{eq:scl-a-growth} 
and \eqref{eq:scl-d-growth} to supply
\begin{equation*}
	\begin{split}
		&\abs{S(\varrho(s)) \overline{a}(s)
		\cdot \Grad_{(x,\xi)}\varphi_{\kappa,\ell}}
		\lesssim \left(\varrho-\varrho^2\right)(s)
		\abs{a(s)}\psi_\ell\left(\abs{\Grad w_N}
		+\frac{1}{\kappa} 
		\En_{\kappa\le \abs{\xi}\le 2\kappa} w_N\right)
		\\ & \quad\qquad\qquad\qquad\qquad\qquad\qquad
		+\left(\varrho-\varrho^2\right)(s)\abs{d(s)}\frac{1}{\ell} 
		\En_{\ell\le \abs{\xi}\le 2\ell} w_N
		\\ & \quad
		\lesssim \norm{\frac{a(s)}{1+\abs{x}}}_{L^\infty_x}
		\left(\varrho-\varrho^2\right)(s)\psi_\ell w_N
		+m_d(t)\left(\varrho-\varrho^2\right)(s)
		\left(1+\abs{\xi}\right)\frac{1}{\ell}
		\En_{\ell\le \abs{\xi}\le 2\ell} w_N
		\\ & \quad 
		\lesssim \left(m_a(s)+m_d(s)\right)
		\left(\varrho-\varrho^2\right)(s) w_N 
		\in L^1_{\omega,t,x,\xi},
	\end{split}
\end{equation*}
and thus 
\begin{align*}
	& \abs{I_1(\varphi_{\kappa,\ell})} 
	\lesssim \int_0^t \left(m_a+m_d\right)(s)
	\left(\EE \iint_{\R^d\times\R}
	\left(\varrho-\varrho^2\right)(s)
	\, w_N \,d\xi \dx \right)\, ds.
\end{align*}

Next, since $\varrho\in L^\infty_{\omega,t,x,\xi}$ and 
$\pxi \varrho=-\nu(d\xi)$,
\begin{equation*}
	\begin{split}
		& \abs{B^2(s)\pxi S(\varrho(s)) 
		\pxi \varphi_{\kappa,\ell}}\\ & \quad 
		\overset{\eqref{eq:scl-def-noise-B}}{\lesssim} 
		\frac{1}{\ell}
		\En_{\ell\le \abs{\xi}\le 2\ell}
		\left(1+\abs{\xi}^2\right) 
		\abs{1-2\varrho(s)}  \phi_{\kappa}\, w_N \,\nu(d\xi)
		\lesssim \frac{1}{\ell} 
		\left(1+\abs{\xi}^2\right) w_N\, \nu(d\xi), 
	\end{split}
\end{equation*}
and so, recalling \eqref{eq:kinetic-Lp-bound-Young}, 
$\abs{I_2(\varphi_{\kappa,\ell}}\lesssim_{T,N}\frac{1}{\ell}
\overset{\ell\uparrow \infty}{\longrightarrow} 0$.

Evoking \eqref{eq:scl-R-growth}, 
\begin{align*}
	\abs{\pxi \left( R\varphi_{\kappa,\ell}\right)}
	& \le \abs{\pxi R(s) \psi_{\ell}+R(s)\psi_{\ell}'}
	\, \phi_\kappa\, w_N 
	\\ & 
	\lesssim \left(m_R(s)+m_R(s)\left(1+\abs{\xi}\right)
	\En_{\ell\le \abs{\xi}\le 2\ell}\frac{1}{\ell}
	\right) w_N \lesssim m_R(s) \,w_N,
\end{align*}
and thus, after an integration by parts,
\begin{align*}
	\abs{I_3(\varphi_{\kappa,\ell})}
	\lesssim \int_0^t m_R(s)
	\left(\EE \iint_{\R^d\times\R}  
	\left(\varrho-\varrho^2\right)(s)
	\, w_N \,d\xi \,dx\right)\, ds.
\end{align*}

Finally, using again that $\pxi \varrho=-\nu$,
\begin{align*}
	-\pxi \left(S'(\varrho(s))\varphi_{\kappa,\ell}\right)
 	& = -2 \phi_\kappa\, \psi_\ell\, w_N\, \nu(d\xi)
 	-\left(1-2\varrho(s)(s)\right)\, 
 	\phi_\kappa\,  \psi_\ell'\, w_N
 	\\ & \le \left(2\varrho(s)-1\right)
 	\, \phi_\kappa \, \psi_\ell' \, w_N,
\end{align*}
and so, putting $\varrho\in L^\infty_{\omega,t,x,\xi}$ and 
\eqref{eq:pmoment-weight} to good use,
\begin{equation*}
	\begin{split}
		\abs{I_4(\varphi_{\kappa,\ell})}
		\lesssim 
		\frac{1}{\ell}\EE m_N\left([0,T]\times\R^d
		\times \set{\ell\le \abs{\xi}\le 2\ell}\right)
		=O(1/\ell) \overset{\ell\uparrow \infty}{\longrightarrow} 0.
	\end{split}
\end{equation*}

Summarizing our computations (after sending $\kappa\to \infty$),
\begin{equation}\label{eq:scl-rho-rho2-ell}
	\begin{split}
		& \EE \int_{\R^d\times\R} \left( \varrho-\varrho^2\right) (t)
		\, \psi_\ell w_N\,d\xi \,dx
		\lesssim \EE\int_{\R^d\times\R}
		\left( \varrho_0-\varrho_0^2\right)
		\, \psi_\ell w_N \,d\xi \,dx
		\\ & \quad\quad
		+\int_0^t M(s)\left(\EE \int_{\R^d\times\R} 
		\left( \varrho-\varrho^2\right) (s)
		\, \psi_\ell w_N\,d\xi \,dx \right)\,ds+O(1/\ell),
	\end{split}
\end{equation}
where $M$ is an integrable function on $[0,T]$. We arrive at 
the sought after \eqref{eq:scl-contraction-rhomrho2} 
by sending $\ell\uparrow\infty$ and then
applying Gronwall's inequality.

\medskip \noindent 
Unfortunately the equation \eqref{eq:scl-S(varrho)} for 
$S(\rho)$ is only suggestive as the calculations 
involving the chain rule are merely formal. To make the calculations rigorous we regularize the 
"linear" equation \eqref{eq:scl-gen-kinetic-eqn}, bringing in several regularization 
errors that must be controlled. Let $J_\eps^x:\R^d\to \R$, $J^\xi_\delta:\R\to \R$ be 
standard Friedrich mollifiers, and define 
\begin{align*}
	\varrho_{\eps,\delta} (\omega,t,x,\xi) &
	= \varrho \star 
	\left(J_\eps^x J^\xi_\delta\right)
	= \iint_{\R^d\times \R} 
	\varrho(\omega,t,y,\zeta)J^x_{\eps}(x-y) 
	J^\xi_\delta(\xi-\zeta)\, dy \, d\zeta,
	\\ 
	m_{\eps,\delta} (\omega,t,x,\xi) & 
	= m \star \left(J^x_\eps J^\xi_\delta\right)
	=\iint_{\R^d\times \R} J^x_{\eps}(x-y) 
	J^\xi_\delta(\xi-\zeta)\, m(t,dy,d\zeta).
\end{align*}
The mollified quantities $\varrho_{\eps,\delta},m_{\eps,\delta}$ 
are smooth in $x,\xi$ but discontinuous in $t$. However, working with 
suitable representatives (versions), we can ensure 
that $\varrho_{\eps,\delta}, m_{\eps,\delta}$ are 
c{\`a}dl{\`a}g / c{\`a}gl{\`a}d in time $t$, thereby making the 
It\^{o} formula available to us, and thus the arguments below can be made rigorous (see 
e.g.~\cite{DV,Dotti:2018aa,Galimberti:2018aa,Gess:2018ab,Gess:2018aa,Gess:2019aa}). 
In passing, note that $m_{\eps,\delta}$ is a measure on $[0,T]$ 
(depending on the "parameters" $\omega,x,\xi$). 

The following equation holds a.s.:
\begin{equation}\label{eq:scl-kinetic-eqn-reg}
	\begin{split}
		\pt \varrho_{\eps,\delta} 
		& +\Div_{(x,\xi)}\Bigl(\overline{a}\varrho_{\eps,\delta}\Bigr)	
		+R \partial_\xi \varrho_{\eps,\delta}
		+\sum_{k\ge 1}\left( \left(b_k \partial_\xi \varrho\right)
		\star \left(J^x_\eps J^\xi_\delta\right)\right)
		\, \dotW_k(t)
		\\ & \quad
		= \pxi\left( \left(\frac{B^2}{2}\pxi \varrho \right)
		\star \left(J^x_\eps J^\xi_\delta\right)\right)
		+\partial_\xi m_{\eps,\delta}+r_{\eps,\delta}
		\quad \text{in $\Dp([0,T)\times\R^d\times\R)$},
	\end{split}
\end{equation}
where the reminder term $r_{\eps,\delta}=r_{\eps,\delta}(\omega,t,x,\xi)$ 
takes the form
\begin{align*}
	r_{\eps,\delta} & :=\Div_{(x,\xi)}
	\Bigl(\overline{a}\varrho_{\eps,\delta}\Bigr)
	-\Div_{(x,\xi)}\left( \left(\overline{a}\varrho\right)
	\star \left(J^x_\eps J^\xi_\delta\right)\right) 
	+ R\partial_\xi \varrho_{\eps,\delta}
	- \left(R\partial_\xi \varrho\right)
	\star\left(J^x_\eps J^\xi_\delta\right).
\end{align*}
Our assumptions imply that
$\overline{a},R\in L^1\left(0,T;W^{1,1}_{\loc}(\R^d\times \R)\right)$, 
whereas the generalized kinetic solution $\varrho$ belongs a.s.~to 
$L^\infty\left(0,T;L^\infty(\R^d\times\R)\right)$. Moreover, 
$\Div_{(x,\xi)} \overline{a} =0$. Hence, by 
\cite[Lemma II.1]{DiPerna:1989aa}, $r_{\eps,\delta}$ 
converges a.s.~to zero in $L^1_{\loc}$ as $\eps,\delta\to 0$. Given 
\eqref{eq:scl-kinetic-eqn-reg}, we apply the It\^{o} formula 
as well as the classical (spatial) chain rule.  
The result is the following equation for $S(\varrho_{\eps,\delta})$ 
that holds a.s.~in $\Dp([0,T)\times\R^d\times\R)$:
\begin{equation}\label{eq:scl-S(varrho)-approx}
	\begin{split}
		& \pt S(\varrho_{\eps,\delta})
		+\Div_{(x,\xi)}\Bigl(\overline{a}S(\varrho_{\eps,\delta})
		\Bigr)	+R \partial_\xi S(\varrho_{\eps,\delta})
		\\ & \quad
		+\sum_{k\ge 1} S'(\varrho_{\eps,\delta}) 
		\left( \left(b_k \partial_\xi \varrho\right)
		\star \left(J^x_\eps J^\xi_\delta\right)\right)\, \dotW_k(t)
		=  \pxi\left(\frac{B^2}{2}\pxi S(\varrho_{\eps,\delta})\right)
		\\ & \quad \quad
		+S'(\varrho_{\eps,\delta})\partial_\xi m_{\eps,\delta}
		+S'(\varrho_{\eps,\delta})r_{\eps,\delta}
		+ \pxi \left(S'(\varrho_{\eps,\delta})
		\tilde r_{\eps,\delta}\right)
		+\mathcal{Q}_{\eps,\delta},
	\end{split}
\end{equation}
where $\tilde r_{\eps,\delta}
= \frac{B^2}{2}\pxi \varrho_{\eps,\delta}
-\left(\frac{B^2}{2}\pxi \varrho \right)
\star \left(J^x_\eps J^\xi_\delta\right)$ and
{\small
\begin{equation}\label{eq:Qreg}
	\begin{split}
		\mathcal{Q}_{\eps,\delta} & =
		\frac12 S''(\varrho_{\eps,\delta})
		\sum_{k\ge 1} \left( \left(b_k \partial_\xi \varrho\right)
		\star \left(J^x_\eps J^\xi_\delta\right)\right)^2
		-\frac12 S''(\varrho_{\eps,\delta}) 
		\left(\left(B^2\pxi \varrho \right)
		\star \left(J^x_\eps J^\xi_\delta\right)\right)
		\pxi \varrho_{\eps,\delta},
	\end{split}
\end{equation}
}
As a result of assumptions \eqref{eq:scl-def-noise-B} 
and \eqref{eq:scl-noise-bk-reg}, 
$B^2\in L^1\left(0,T;W^{1,1}_{\loc}(\R^d\times \R)\right)$ (besides, 
we know $\varrho\in BV_\xi$). Thus, it is not difficult o show that, 
$\tilde r_{\eps,\delta}$ converges a.s.~to zero 
in $L^1_{\loc}$ as $\eps,\delta\to 0$ \cite{Galimberti:2018aa}. 
Choosing \eqref{eq:scl-testfunc-kl} as test function  
in \eqref{eq:scl-S(varrho)-approx}, recalling 
that $S(\varrho)=\varrho-\varrho^2$, and carrying on as 
before \eqref{eq:scl-rho-rho2-ell}, we deliver
{\small
\begin{equation}\label{eq:scl-rho-rho2-ell-eps-delta}
	\begin{split}
		& \EE \int_{\R^d\times\R} 
		\left( \varrho_{\eps,\delta}-\varrho_{\eps,\delta}^2\right) (t)
		\, \psi_\ell w_N\,d\xi \,dx
		\lesssim \EE\int_{\R^d\times\R}
		\left( \varrho_{0,\eps,\delta}-\varrho_{0,\eps,\delta}^2\right)(0)
		\, \psi_\ell w_N \,d\xi \,dx
		\\ & \quad
		+\int_0^t M(s)\left(\EE \int_{\R^d\times\R} 
		\left( \varrho_{\eps,\delta}-\varrho_{\eps,\delta}^2\right)(s)
		\psi_\ell w_N\,d\xi \,dx \right) \,ds
		\\ & \quad\quad
		+\EE \int_0^T\int_{\R^d}\int_{\R} 
		\left(\abs{r_{\eps,\delta}}
		+\frac{1}{\ell}\En_{\ell\le \abs{\xi}\le 2\ell}
		\abs{\tilde r_{\eps,\delta}} \right) w_N \,d\xi \,dx\,dt
		\\ & \quad\quad\quad
		+\EE \int_0^T\int_{\R^d}\int_{\R}
		\mathcal{Q}_{\eps,\delta} w_N\,d\xi \,dx\,dt+O(1/\ell),
	\end{split}
\end{equation}
}for some integrable function $M$ on $[0,T]$, where 
$\varrho_{0,\eps,\delta} := \varrho_0 
\star \left(J_\eps^x J^\xi_\delta\right)$. 
Provided we show that the "$\eps,\delta\to 0$ limit" of the 
$\mathcal{Q}_{\eps,\delta}$--term is zero, 
we obtain the rigidity inequality \eqref{eq:scl-contraction-rhomrho2} 
by sending $\eps,\delta\downarrow 0$ and $\ell\uparrow\infty$ 
in \eqref{eq:scl-rho-rho2-ell-eps-delta}, 
followed by an application of Gronwall's inequality. 

It remains to compute the limit of the 
$\mathcal{Q}_{\eps,\delta}$--term. 
Recalling that $B^2=\sum_{k\ge 1} b_k^2$, we write 
$\mathcal{Q}_{\eps,\delta}(\omega,t,x,\xi)
=\sum_{k\ge 1} \mathcal{Q}_{\eps,\delta,k}(\omega,t,x,\xi)$, 
where, for $k=1,2,\ldots$,
\begin{align*}		
	& \mathcal{Q}_{\eps,\delta,k}(\omega,t,x,\xi)
	:=\left(\left(b_k^2\pxi \varrho \right)
	\star \left(J^x_\eps J^\xi_\delta\right)\right)
	\pxi \varrho_{\eps,\delta}
	-\left( \left(b_k \partial_\xi \varrho\right)
	\star \left(J^x_\eps J^\xi_\delta\right)\right)^2
	\\ & \quad = 
	\int\!\!\!\!\!\!\iint\!\!\!\!\!\!\!\!\iint 
	\left(\left(b_k(\omega,t,y,\zeta)\right)^2
	-b_k(\omega,t,y,\zeta)b_k(\omega,t,\bar y,\bar\zeta)\right)
	\\ & \quad \qquad \qquad \qquad 
	\times (\pxi \varrho)(\omega,t,y,\zeta)
	(\pxi \varrho)(\omega,t,\bar y,\bar \zeta)
	\\ & \quad \qquad \qquad\qquad\qquad  
	\times J^x_\eps(x-y)J^x_\eps(x-\bar y)
	J^\xi_\delta(\xi-\zeta)J^\xi_\delta(\xi-\bar\zeta)
	\, d\zeta \,dy \, d\bar \zeta\, d\bar y.
\end{align*}
We can switch the roles of $y$ and $\bar y$ as well as 
$\zeta$ and $\bar \zeta$. Add the resulting 
expression for $\mathcal{Q}_{\eps,\delta,k}$ 
to the one above and divide by 2, obtaining
{\small
\begin{equation}\label{eq:scl-Qk-bk-square}
	\begin{split}
		\mathcal{Q}_{\eps,\delta,k}(\omega,t,x,\xi)
		& =  \frac12 \int\!\!\!\!\!\!\iint\!\!\!\!\!\!\!\!\iint
 		\abs{b_k(\omega,t,y,\zeta)-b_k(\omega,t,\bar y,\bar\zeta)}^2
 		(\pxi \varrho)(\omega,t,y,\zeta)
		(\pxi \varrho)(\omega,t,\bar y,\bar \zeta)
		\\ & \qquad \quad\quad \quad
		\times J^x_\eps(x-y)J^x_\eps(x-\bar y)
		J^\xi_\delta(\xi-\zeta)J^\xi_\delta(\xi-\bar\zeta)
		\, d\zeta\, dy \, d\bar \zeta\, d\bar y.
	\end{split}
\end{equation}
}
Summing over $k$, recalling \eqref{eq:scl-def-noise-B}, 
and using $\pxi \varrho=-\nu_{\omega,t,x}(d\xi)$ 
with $\nu(\R)=1$, the following estimate eventually materializes:
{\small
\begin{align*}
 	& \iint \mathcal{Q}_{\eps,\delta}(\omega,t,x,\xi)
 	\, w_N(x)\,\dxi\dx
 	\\ & \quad \lesssim 
 	\frac12 \iint\!\!\!\!\!\!\iint
 	\!\!\!\!\!\!\iint
 	\Bigl(\abs{y-\bar y}^2+
 	\abs{\zeta-\bar\zeta}\mu\left(\abs{\zeta-\bar\zeta}\right)\Bigr)
 	(\pxi \varrho)(\omega,t,y,\zeta)
	(\pxi \varrho)(\omega,t,\bar y,\bar \zeta)
	\\ &  \qquad\qquad \qquad\qquad
	\times J^x_\eps(x-y)J^x_\eps(x-\bar y)
	J^\xi_\delta(\xi-\zeta)J^\xi_\delta(\xi-\bar\zeta)
	\, w_N(x) \, d\zeta \, dy \, d\bar \zeta \, d\bar y\,\dxi\, \dx
	\\ & \quad\quad \lesssim_N \left(\eps + \mu(\delta)\right)
	\overset{\eps,\delta\downarrow 0}{\longrightarrow} 0.
\end{align*}
}
This concludes the proof. 
\end{proof}

\begin{remark} 
Regarding the "weight-free" $L^p$--framework discussed 
in Remark, the proof of Proposition \ref{prop:scl-rigidity} remains 
the same except for a few changes involving 
the terms $I_1(\varphi_{\kappa,\ell})$ 
and $I_2(\varphi_{\kappa,\ell})$ to account 
for the weight-free test function 
$\varphi_{\kappa,\ell}(x,\xi)=\phi_\kappa(x)\psi_\ell(\xi)$
and the modified assumptions \eqref{eq:scl-init-ass-new}, 
\eqref{eq:scl-def-noise-B-new}, and \eqref{eq:scl-flux-Lipschitz1-new}.
\end{remark}

The next theorem contains the main result of this section, namely 
the existence, uniqueness, and $L^1$ stability of kinetic solutions.

\begin{theorem}[well-posedness]\label{thm:scl-L1stab}
Suppose that $b_k,B^2,\overline{a}=\set{a,d},R$ satisfy conditions 
\eqref{eq:scl-def-noise-B}, \eqref{eq:scl-noise-bk-reg}, 
\eqref{eq:scl-a-growth}, \eqref{eq:scl-d-growth}, 
\eqref{eq:scl-R-growth}, \eqref{eq:scl-a-d-R-Sobolev} 
and $\Div_{(x,\xi)} \overline{a}=0$. There exists 
a unique kinetic solution of \eqref{eq:scl} with 
initial data $u_0$ satisfying \eqref{eq:scl-init-ass}. 
If $u_1, u_2$ are two kinetic solutions of \eqref{eq:scl} with 
initial data $u_{1,0}, u_{2,0}$, respectively, then 
\begin{equation}\label{eq:scl-L1-contraction-u1u2}
	\EE \int_{\R^d} \abs{u_1(t,x)-u_2(t,x)}\, w_N \dx
	\lesssim_{T,N} 
	\EE \int_{\R^d} \abs{u_{1,0}(x)-u_{2,0}(x)}\, w_N \dx,
\end{equation} 
for all $t\in [0,T]$, where $w_N$ is defined in \eqref{eq:scl-weight-def}. 
Besides, the unique kinetic solution $u$ of \eqref{eq:scl} 
has a representative in the space $L^p(\Omega;L^\infty(0,T;L^p(w_Ndx)))$ 
which a.s.~exhibits continuous samples paths in $L^p(w_Ndx)$, 
for all $p\in [1,\infty)$.
\end{theorem}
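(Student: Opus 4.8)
The plan is to prove Theorem~\ref{thm:scl-L1stab} in three parts: uniqueness and $L^1$ stability first, then existence, and finally the temporal continuity of the selected representative. For the $L^1$ contraction \eqref{eq:scl-L1-contraction-u1u2}, I would start from the kinetic equations \eqref{eq:scl-kinetic-eqn} satisfied by $\rho_i=\En_{\xi<u_i}$ with kinetic measures $m_i$, $i=1,2$. Following the doubling-of-variables philosophy adapted to the kinetic framework, I would regularize each equation by convolution $J^x_\eps J^\xi_\delta$ exactly as in the proof of Proposition~\ref{prop:scl-rigidity}, apply the It\^o product formula to the quantity $\rho_{1,\eps,\delta}(1-\rho_{2,\eps,\delta})$ (equivalently, work with $g-g^2$ where $g=\tfrac12(\chip(\xi,u_1)+\chip(\xi,u_2))$ as in Remark~\ref{eq:scl-chip-prop}(4)), and test against $\varphi_{\kappa,\ell}=w_N\phi_\kappa\psi_\ell$. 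The crucial cancellation is again the $\mathcal Q\equiv 0$ identity between the martingale-bracket quadratic term and the second-order operator, which now relies on the regularity assumption \eqref{eq:scl-noise-bk-reg} on the $b_k$: the commutator estimate giving $\iint \mathcal Q_{\eps,\delta}\,w_N \lesssim_N \eps+\mu(\delta)\to 0$ is identical to the one at the end of that proof. The measure terms $-\int\pxi(S'(\rho_{i,\eps,\delta})\varphi_{\kappa,\ell})\,dm_{i,\eps,\delta}$ have the favorable sign after integration by parts (using $S'(g)\ge 0$ off the support and $S''=-2$), so they are discarded; the DiPerna--Lions remainders $r_{\eps,\delta},\tilde r_{\eps,\delta}\to 0$ in $L^1_{\loc}$ by \cite[Lemma II.1]{DiPerna:1989aa}; and the transport and source terms are bounded by $\int_0^t M(s)\,\EE\iint(\ldots)\,w_N$ using \eqref{eq:scl-a-growth}--\eqref{eq:scl-R-growth}. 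Sending $\eps,\delta\downarrow 0$, $\kappa,\ell\uparrow\infty$, and applying Gronwall yields $\EE\iint \abs{\chip(\xi,u_1)-\chip(\xi,u_2)}w_N\,d\xi\,dx \lesssim_{T,N}(\text{same at }t=0)$, which is \eqref{eq:scl-L1-contraction-u1u2} by Remark~\ref{eq:scl-chip-prop}(3). Uniqueness is the special case $u_{1,0}=u_{2,0}$.

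For existence, I would construct solutions by vanishing viscosity together with a suitable regularization of the data and the coefficients. Concretely, I would consider the parabolic SPDE $\pt u^\nu+\Div_x A(t,x,u^\nu)=\nu\Delta u^\nu+B(t,u^\nu)\dotW+R(t,x,u^\nu)$ with mollified flux/source and initial data truncated to lie in $(L^2\cap L^\infty)$, for which classical well-posedness (e.g.\ monotone operator / Galerkin methods as in \cite{DPZ,PR}) applies. The entropy inequalities \eqref{eq:scl-entropy-ineq} with the defect measure $m^\nu = \nu\abs{\Grad u^\nu}^2\,\delta(\xi-u^\nu)$ hold with equality, and the a priori bounds are precisely the weighted $L^p$ estimates of Remark~\ref{rem:drop-the-weight}/Remark~\ref{rem:weighted-pmoments}, which are \emph{uniform in }$\nu$ because the $-2N$ weight term, the flux-divergence term, and the source term are all controlled by \eqref{eq:scl-flux-Lipschitz1}--\eqref{eq:scl-reaction-Lipschitz1}. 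These give uniform bounds on $u^\nu$ in $L^p(\Omega;L^\infty(0,T;L^p(w_Ndx)))$ and on the weighted kinetic measures $m^\nu_N$ via \eqref{eq:scl-mN-pbound}. I would then pass to the limit: compactness of the Young measures $\nu^\nu=-\pxi\rho^\nu$ (in the sense of convergence of $\rho^\nu\rightharpoonup\varrho$ weak-$\star$), weak convergence of $m^\nu\rightharpoonup m$, and stability of the stochastic integral term under these limits (handled via the martingale identification / stochastic compactness as in Proposition~\ref{P:1.1}(5) or the Skorokhod-type arguments of \cite{DV}), to obtain a \emph{generalized} kinetic solution $\varrho$ of \eqref{eq:scl-gen-kinetic-eqn}. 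Since the initial data is a genuine kinetic function $\varrho_0=\En_{\xi<u_0}$, Proposition~\ref{prop:scl-rigidity} upgrades $\varrho$ to a bona fide kinetic solution $\rho=\En_{\xi<u}$, completing existence.

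For the continuity of paths, I would argue that the unique kinetic solution $u$ admits a representative in $L^p(\Omega;L^\infty(0,T;L^p(w_Ndx)))$ with a.s.\ continuous sample paths in $L^p(w_Ndx)$. Using the space-weak formulation \eqref{eq:scl-weak-in-space}, the real-valued process $t\mapsto\action{\rho(t)}{\varphi}$ decomposes as a finite-variation part plus a continuous martingale, hence admits a c\`adl\`ag version for each test $\varphi$; since $m(\set{0}\times\R^d\times\R)=0$ (Remark~\ref{rem:init-measure}) and, more generally, the kinetic measure of a kinetic solution charges no time-slice (which for $p$-integrable kinetic solutions follows from applying the moment identity \eqref{eq:p-moment-eqn} across an arbitrary instant and Brenier's lemma, exactly as in Remark~\ref{rem:init-measure}), the jump part vanishes, giving weak continuity in time. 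To upgrade weak to strong $L^p(w_Ndx)$-continuity, I would test \eqref{eq:p-moment-eqn} with $S(\xi)=\abs{\xi}^{p}$ (suitably smoothed), obtaining that $t\mapsto\norm{u(t)}_{L^p(w_Ndx)}^p$ equals a continuous process (the deterministic drift plus the continuous stochastic integral $M(t)$ from Remark~\ref{remark:improv-integrability}), so the norm is continuous; combined with weak continuity this yields strong continuity by uniform convexity of $L^p$ for $p\in(1,\infty)$, and the case $p=1$ follows by a density/Vitali argument using the higher-$p$ bounds. The main obstacle is the existence step: ensuring that the vanishing-viscosity a priori estimates are genuinely uniform in $\nu$ on the \emph{unbounded} domain $\R^d$ with the linearly growing, $x$-dependent flux, and correctly identifying the limit of the stochastic integral $\sum_k\int b_k\pxi\varrho^\nu\,dW_k$ together with the limit kinetic measure $m$ in the weak formulation — this is where the weighted spaces, the bounds \eqref{eq:scl-flux-Lipschitz1}--\eqref{eq:scl-reaction-Lipschitz1}, and the martingale-convergence machinery (in the spirit of Proposition~\ref{P:1.1}(5)) all have to be brought together carefully. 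The uniqueness/stability step, by contrast, is essentially a rerun of the proof of Proposition~\ref{prop:scl-rigidity} with $\rho-\rho^2$ replaced by $g-g^2$, hence routine given that proof.
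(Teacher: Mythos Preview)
Your proposal is correct and follows essentially the same strategy as the paper for all three parts. The one difference worth pointing out is in the packaging of the $L^1$ contraction: rather than rerunning the regularization argument of Proposition~\ref{prop:scl-rigidity} for the product $\rho_1(1-\rho_2)$ or for $g-g^2$, the paper simply \emph{applies} Proposition~\ref{prop:scl-rigidity} as a black box to the function $\overline\varrho=\tfrac12(\rho_1+\rho_2)$, observing that $\overline\varrho$ is itself a generalized kinetic solution with kinetic measure $\tfrac12(m_1+m_2)$, and then uses the identity $\overline\varrho-\overline\varrho^2=\tfrac14\abs{\rho_1-\rho_2}$ (valid because $\rho_i^2=\rho_i$) to conclude. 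You already have this idea in your parenthetical remark about $g-g^2$; the paper just exploits it to avoid repeating any of the commutator/regularization machinery. For existence the paper proceeds exactly as you outline (vanishing viscosity, uniform weighted $L^p$ bounds, Young-measure compactness, then rigidity), and for path continuity it simply cites \cite[Corollary~16]{DV} rather than spelling out the weak-continuity-plus-norm-continuity argument you sketch.
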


\begin{proof}
As in \cite{Galimberti:2018aa,Gess:2019aa}, we point out 
that the $L^1$ contraction principle \eqref{eq:scl-L1-contraction-u1u2} is 
a simple consequence of Proposition \ref{prop:scl-rigidity}. 
Indeed, define $\overline\varrho=\frac12 \left(\En_{\xi<u_1}
+\En_{\xi<u_2}\right)=:\frac12 \left(\rho_1
+\rho_2\right)$ and also $\overline\varrho_0=\frac12 \left(\En_{\xi<u_{1,0}}
+\En_{\xi<u_{2,0}}\right)=:\frac12 \left(\rho_{0,1}
+\rho_{0,2}\right)$. Note that $\overline\varrho$ 
is a generalized kinetic solution with initial 
data $\overline\varrho_0$, kinetic measure $\overline m=\frac12 (m_1+m_2)$, 
and $\partial_\xi \overline\varrho =-\frac12 
\left(\delta_{u_1}+\delta_{u_2}\right)=:-\overline\nu$. 
Clearly, $\overline m(\set{0}\times\R^d\times\R)=0$ (since $m_1,m_2$ both 
vanish at $t=0$ because of the kinetic initial data)  and thus 
$\varrho(0)=\varrho_0$, cf.~Remark \ref{rem:init-measure}. 
By Proposition \ref{prop:scl-rigidity},
\begin{equation*}
	\EE \iint_{\R^d\times\R} 
	\left( \overline\varrho-\overline\varrho^2\right)(t) \, w_N \dxi\dx
	\lesssim_{T,N} \EE\iint_{\R^d\times\R}
	\left(\overline\varrho_0-\overline\varrho_0^2\right)
	w_N \dxi \dx,
\end{equation*}
for a.e.~$t\in [0,T]$. A simple computation, exploiting
the identities $\rho_i^2=\rho_i$ ($i=1,2$), will reveal that 
$\overline\varrho-\overline\varrho^2=
\frac14 \left(\rho_1-\rho_2\right)^2=\frac14 \abs{\rho_1-\rho_2}$ and 
so $\int_{\R} \left(\overline\varrho-\overline\varrho^2\right)\dxi
=\frac14\abs{u_1-u_2}$. In the same way, 
we have $\int_{\R}\left(\overline\rho_0-\overline\rho_0^2\right)\dxi
=\frac14\abs{u_{1,0}-u_{2,0}}$. 
Consequently, \eqref{eq:scl-L1-contraction-u1u2} holds.

The sample paths of a kinetic 
solution $u$ are a.s.~continuous as a result 
of the uniqueness result. The detailed proof
is the same as in \cite[Corollary 16]{DV} (see also \cite{Dotti:2018aa}). 
Thanks to the continuity of the sample 
paths, the contraction inequality \eqref{eq:scl-L1-contraction-u1u2} 
holds for all $t\in[0,T]$.

The existence part of the theorem can be 
be founded on the vanishing viscosity method 
\cite{BVW1,CDK,DV,FN,KSt}, or operator splitting 
\cite{Bauzet:2015aa,Karlsen:2016aa} to separate 
the deterministic and stochastic effects in \eqref{eq:scl}. 
Existence results on $\R^d$ are provided in 
these references under the assumptions that $R\equiv 0$ and 
$A=A(u)$ does not depend on $t,x$. The techniques 
employed in \cite{Bauzet:2015aa,BVW1,CDK,DV,FN,Karlsen:2016aa,KSt} 
can be adapted to the general context provided by \eqref{eq:scl}. Here we only give a sketch of the proof via the vanishing viscosity method, based on \cite{DV}.

Given $\ve>0$ and consider the following parabolic SPDE
\begin{equation}\label{eq:scl^ve}
	\begin{split}
		& \pt u^\ve + \Div_x A(t,x,u^\ve) - \ve \Delta_x u^\ve
		=B(t,u^\ve)\dotW(t) + R(t,x,u^\ve),
		\hspace{2mm} (t,x)\in (0,T)\times \R^d,\\
		& u^\ve(0,x)=u_0(\omega,x), \hspace{5mm} x\in \R^d.
	\end{split}	
\end{equation}

It is not difficult to show that equation \eqref{eq:scl^ve} is well-posed. 
Indeed, the unique weak solution belonging to the weighted space 
$L^2(\Omega;(C([0,T]);L^2(\omega_N dx)))\cap L^2(\Omega\times[0,T];$ $H^1(\omega_N dx))$ 
can be found as a fixed point of the operator
\begin{align*}
Kv(t) & :=S(t)u_0 + \int_0^tS(t-s)\Big( R(s,\cdot,v(s)) - \Div_x A(s,\cdot,v(s))\Big)\, ds 
\\ & \qquad \qquad \qquad
+ \int_0^t S(t-s)B(s,v(s))\, dW(s),
\end{align*}
where $S(t)$ is the semigroup generated 
by the heat equation in $\mathbb{R}^d$.

Let $u^\ve$ be the weak solution of \eqref{eq:scl^ve}. 
Then, for $S \in C^2(\mathbb{R})$, by It\^{o} formula we have that the 
following equation is a.s. satisfied in the sense of distributions:
{\small
\begin{equation}\label{eq:scl-entropy-ineq-ve}
	\begin{split}
		&\pt S(u^\ve) + \Div_x Q_S(t,x,u^\ve) 
		+ S'(u^\ve)\left((\Div_x A)(t,x,u^\ve)-R(t,x,u^\ve)\right) 
		- (\Div_x Q_S)(t,x,u^\ve)
		\\ & \quad 
		= -\ve S''(u^\ve)|\nabla u^\ve|^2 + \ve\Delta_x \S(u^\ve) 
		+ \sum_{k\ge1} S'(u^\ve) b_k(t,x,u^\ve)\, \dotW_k(t)
		+ \frac12 S''(u^\ve) B^2(t,x,u^\ve).
	\end{split}
\end{equation}
}
where $Q_S:[0,T]\times\mathbb{R}\times\mathbb{R}\to \mathbb{R}^d$ 
is given by $(\partial_u Q_S)(t,x,u)=S'(u)(\partial_u A)(t,x,u)$. 

Let $S(\xi)=|\xi|^p$, $p\geq 2$. Then, similarly as in Remark \ref{rem:drop-the-weight}, taking conveniently 
chosen test functions, after some manipulation it follows that
\begin{equation}\label{eq:4.44}
\mathbb{E}\left(\sup_{t\in[0,T]}\| u^\ve(t) \|_{L^p(\omega_Ndx)}^p\right) 
+ \ve\int_0^T\int_{\mathbb{R}^d}|u^\ve(t,x)|^{p-2}|\nabla u^\ve|^2 \omega_N(x) dx\, dt\leq C,
\end{equation}
where $C=C(p,u_0,T)$ is independent of $\ve$.

Moreover, $u^\ve$ is a kinetic solution of equation \eqref{eq:scl^ve}, 
in the sense that the function $\varrho^\ve(t,x,\xi) := \En_{\xi<u^\ve(t,x)}$ satisfies the SPDE
\begin{equation}\label{eq:scl-kinetic-eqn-ve}
	\begin{split}
		\pt \varrho^\ve &+\Div_{(x,\xi)}
		\Bigl(\overline{a}\varrho^\ve\Bigr)
		+R \partial_\xi \varrho^\ve - \ve \Delta_x \varrho^\ve
		\\ & 
		+\sum_{k\ge 1}b_k \partial_\xi \varrho^\ve\, \dotW_k(t)
		=  \pxi \left(\frac{B^2}{2}
		\pxi \varrho^\ve \right)+\partial_\xi m^\ve
		\quad \text{in $\Dp([0,T)\times\R^d\times\R)$, a.s.},
	\end{split}
\end{equation}
where $m^\ve = \ve |\nabla_x u^\ve|^2\delta_{\xi=u^\ve}$, 
with initial data $\varrho^\ve(0,x,\xi)=\rho_0(x,\xi)
:=\En_{\xi<u_0(x)}$.

Let us denote $\nu^\ve_{t,x}=-\partial_\xi\varrho^\ve(t,x,\xi)=\delta_{\xi=u^\ve(t,x)}$. 
Then, $\nu^\ve$ is a Young measure and 
by \eqref{eq:4.44} we have, in particular, that
\begin{equation}\label{eq:4.46}
\mathbb{E}\int_0^T\int_{\mathbb{R}^d}
\int_\mathbb{R} \abs{\xi}^p 
\, d\nu_{t,x}^\ve(\xi)
\, \omega_N dx\, dt\leq C_p,
\end{equation}
for any $p\geq 0$, uniformly in $\ve$. 
Likewise, \eqref{eq:4.44} also implies that
$$
\mathbb{E}
\int_{[0,T]\times\mathbb{R}^d\times\mathbb{R}}
\abs{\xi}^p \, dm_N^\ve(\xi,t,x)\leq C_p,
$$
uniformly in $\ve$, where $m_N^\ve=\omega_N m^\ve$. 
This last estimate can be improved to the following
\begin{equation}\label{eq:4.49}
\mathbb{E}
\abs{\int_{[0,T]\times\mathbb{R}^d\times\mathbb{R}}
\abs{\xi}^{2p}\, dm_N^\ve(\xi,t,x)}^2
\leq C_p, \qquad p\geq 2.
\end{equation}
Proceeding similarly as in 
Remark \ref{remark:improv-integrability}, 
it suffices to take convenient test functions 
(in connection with the weight $\omega_N$) 
in \eqref{eq:scl-entropy-ineq-ve} with 
$S(\xi)=\abs{\xi}^{2p+2}$, 
squaring the resulting equation 
and taking expectation. Indeed, note that 
\begin{align*}
& \mathbb{E}
\abs{\int_{[0,T]\times\mathbb{R}^d\times\mathbb{R}}
\abs{\xi}^{2p} 
\, dm_N^\ve(\xi,t,x)}^2
\\ & \qquad 
= \frac{1}{(p+2)(p+1)}
\mathbb{E}\abs{\int_0^T 
\int_{\mathbb{R}^d} \ve S''(u^\ve) 
\abs{\nabla_x u^\ve}^2 
\,\omega_N dx \, dt}^2.
\end{align*}
With some manipulation involving 
the It\^{o} isometry and using \eqref{eq:4.44} 
all the other terms can be bounded appropriately 
so that \eqref{eq:4.49} 
follows. We omit the details.

Now, by the theory of Young measures and kinetic functions 
(see e.g.~Theorem 5 and Corollary 6 in \cite{DV}) 
\eqref{eq:4.46} guarantees the existence of 
a sequence $\{ \ve_n\}_n$, a young measure $\nu$ 
and a generalized kinetic function $\varrho:\Omega\times[0,T]\times\mathbb{R}^d\times\mathbb{R}\to [0,1]$ 
such that $\ve_n\to 0$, $\nu^{\ve_n}\to \nu$ 
in the sense of Young measures and $\varrho^{\ve_n}\rightharpoonup 
\varrho$ weakly-$*$ in $L^\infty(\Omega\times[0,T]\times\mathbb{R}^d\times\mathbb{R})$ as $n\to \infty$. 
Moreover, denoting by $\mathcal{M}_b$ the 
space of the bounded Borel Measures on 
$[0,T]\times\mathbb{R}^d\times \mathbb{R}$, 
by \eqref{eq:4.49} there is a kinetic measure $m_N$ 
such that, up to a subsequence, $m_N^{\ve_n}\rightharpoonup m_N$ 
weakly-$*$ in $L^2(\Omega;\mathcal{M}_b)$, as $n\to \infty$. 
Defining $m:=\frac{1}{\omega_N}m_N$, then $m$ 
turns out to be a kinetic measure in the 
sense of Definition \ref{def:kinetic-measure} 
and we may pass to the limit as $\ve=\ve^n \to 0$ 
in equation \eqref{eq:scl-kinetic-eqn-ve} in order 
to conclude that $\varrho$ is a generalized 
kinetic solution of equation \eqref{eq:scl}. 
At this point, the rigidity result implies 
that $\rho=\En_{\xi<u}$ where $u$ 
is a kinetic solution.
\end{proof}

\begin{remark}[strong convergence of the 
parabolic approximations]
Let $\varrho$ and $\varrho^\ve$ be as in 
the proof of Theorem \ref{thm:scl-L1stab}. 
Taking advantage of the particular structure 
of $\varrho^{\ve_n}$ and $\varrho$ we have that
\begin{multline}\label{eq:strong-convergence}
\norm{u^{\ve_n}}_{L^2(\Omega\times[0,T];L^2(\omega_N dx))}^2
-\norm{u}_{L^2(\Omega\times[0,T];L^2(\omega_N dx))}^2
\\ = \int_{[0,T]\times\mathbb{R}^d \times\mathbb{R}}
2\xi(\varrho-\varrho^{\ve_n})\, d\xi
\, \omega_N dx\, dt.
\end{multline}
By Chebyshev's inequality and using \eqref{eq:4.44} 
with $p=3$, for any $R>0$ we have
\begin{align*}
\mathbb{E}\int_0^T\int_{\mathbb{R}^d} 
\int_{\abs{\xi}>R} 
\abs{2\xi (\varrho-\varrho^\ve)} 
\, d\xi \, \omega_N dx\, dt\leq \frac{C}{R}.
\end{align*}

Thus, taking expectation in \eqref{eq:strong-convergence}, 
we may pass to the limit as $\ve_n\to 0$ in order to conclude that
$$
\norm{u^{\ve_n}-u}_{L^2(\Omega\times[0,T];L^2(\omega_N dx))}
\to 0, \qquad\text{as $n\to \infty$}.
$$
In fact, by uniqueness, the whole sequence 
$u^\ve$ converges strongly to the kinetic solution.

Finally, in light of estimate \eqref{eq:4.44}, 
by H\"{o}lder inequality we also deduce that
$$
\norm{u^{\ve}-u}_{L^p(\Omega\times[0,T];L^p(\omega_N dx))}
\to 0, \qquad\text{as $\ve\to 0$},
$$
for any $p\geq 1$.
\end{remark}

\begin{remark}[1/2--H\"older continuous noise coefficient]
Referring to \eqref{eq:scl-infinite-Wiener}, consider 
the simple noise term $b(u)\dW(t)$, 
where $W(t)$ is a one-dimensional Wiener process 
and $b(u)$ is a scalar function. Typical noise functions covered by
the regularity condition \eqref{eq:scl-def-noise-B} include 
$b(u)=\abs{u}^\gamma$, $\gamma>\tfrac12$, which is H\"older continuous 
with exponent $\gamma>\tfrac12$. Condition \eqref{eq:scl-def-noise-B} is the same 
as the one imposed in the existing literature (see e.g.~\cite{DV}). 
Unfortunately, it does not allow for the interesting example 
$b(u)=\sqrt{\abs{u}}$, or any function $b$ that satisfies 
$\abs{b(u)-b(v)}\lesssim \mu(\abs{u-v})$, where 
\begin{equation}\label{eq:scl-int-1-over-mu}
	\int_0^1 \frac{1}{\left(\mu(\xi)\right)^2}
	\dxi = \infty.
\end{equation}
Condition \eqref{eq:scl-int-1-over-mu} embraces  
$\tfrac12$--H\"older continuous noise functions $b$, like 
$b(u)=\sqrt{\abs{u}}$.

Returning to the general case \eqref{eq:scl-infinite-Wiener}, assuming 
$b_k=b_k(\xi)$ $\forall k$, we claim that Proposition \ref{prop:scl-rigidity} (and 
Theorem \ref{thm:scl-L1stab}) actually holds 
with \eqref{eq:scl-noise-bk-reg} replaced by
\begin{equation}\label{eq:scl-noise-bk-reg-new2}
	\sum_{k\geq 1}\abs{b_k(u)-b_k(v)}^2
	\lesssim \left(\mu(\abs{u-v})\right)^2,
\end{equation}
for some continuous nondecreasing function $\mu$ on $\R_+$ 
satisfying $\mu(0+)=0$ and \eqref{eq:scl-int-1-over-mu}. 
To allow for \eqref{eq:scl-noise-bk-reg-new2}, we will make a more 
careful choice of the approximate delta function $J^\xi_\delta$ 
in order to handle to the key error term
\eqref{eq:scl-Qk-bk-square}. Inspired by the work \cite{Yamada:1971aa} 
of Yamada and Watanabe on stochastic differential equations, we pick 
a strictly decreasing sequence $\set{a_n}_{n=0}^\infty$ of 
positive numbers, $a_n\downarrow 0$, recursively 
defined by $a_0=1$ and for $n=1,2,\ldots$ by 
$\int_{a_n}^{a_{n-1}}\frac{1}{\left(\mu(\xi)\right)^2}\dxi = n$. 
For example, with $\mu(\xi)=\sqrt{\xi}$ for $\xi>0$, $a_n = a_{n-1} e^{-n}$; 
hence $a_n=e^{-\frac{1}{2}n(n+1)}$. 
Next, pick positive $C^\infty_c$ functions $\psi_n$ on $\R_+$ with 
$\supp \psi\subset (a_n,a_{n-1})$ and 
\begin{equation}\label{eq:scl-psin-YW}
	\begin{split}
		&0 \le \psi_n(\xi) 
		\le \frac{2}{n\left(\mu(\xi)\right)^2}
		\le \frac{2}{n \xi}, \quad 
		\text{for any $\xi\in \R$},
		\qquad 
		\int_{a_n}^{a_{n-1}} \psi_n(\xi)\, d\xi =1. 
	\end{split}
\end{equation}
We introduce the function $\Psi_n(\xi):=\int_0^{\abs{\xi}} 
\int_0^{\bar\kappa} \psi_n(\kappa)\,d\kappa \, d \bar\kappa$ for 
$\xi \in \R$, which is a symmetric approximation of $\abs{\xi}$.
Since $\psi_n$ (and thus $\Psi_n$) is zero in a 
neighborhood of the origin, we have $\Psi_n\in C^\infty(\R)$ 
and $\Psi_n''(\xi)=\psi_n(\abs{\xi})\le \frac{2}{n\abs{\xi}}$.
Moreover, $\Psi_n(\cdot) \to \abs{\cdot}$ uniformly on $\R$.  

Let us now return to \eqref{eq:scl-rho-rho2-ell-eps-delta} and 
the error term \eqref{eq:scl-Qk-bk-square}, 
replacing  $J^\xi_\delta(\cdot)$ by $\psi_n(\abs{\cdot})$ 
($=\Psi_n''(\cdot)$) and, at the same time, 
renaming $\delta$ by $n$. 
Note that $\sum_{k\ge 1}\abs{b_k(\zeta)-b_k(\bar\zeta)}^2$
is bounded by a constant times $ \left(\mu(\abs{\xi-\zeta})\right)^2
+\left(\mu(\abs{\xi-\bar\zeta})\right)^2$, and 
thus, cf.~\eqref{eq:scl-psin-YW},
\begin{align*}
	\sum_{k\ge 1}
	\abs{b_k(\zeta)-b_k(\bar\zeta)}^2
	\psi_n\left(\abs{\xi-\zeta}\right)
	\psi_n\left(\abs{\xi-\bar\zeta}\right)
	\lesssim \frac{1}{n} \left(\psi_n\left(\abs{\xi-\bar\zeta}\right)
	+\psi_n\left(\abs{\xi-\zeta}\right)\right).
\end{align*}
As a result, 
\begin{align*}
 	& \iint\mathcal{Q}_{\eps,\delta}(\omega,t,x,\xi)
 	\, w_N(x)\,\dxi\dx
 	\\ & \qquad \lesssim 
 	\frac{1}{n}\iint\!\!\!\!\! \!\iint\!\!\!\!\!\!\!\!
 	\iint\!\!\!\!\!\!\!\!\iint 
	\Bigl( \psi_n\left(\abs{\xi-\bar\zeta}\right)
 	+\psi_n\left(\abs{\xi-\zeta}\right)\Bigr)
 	\abs{(\pxi \varrho)(\omega,t,y,\zeta)}
	\abs{(\pxi \varrho)(\omega,t,\bar y,\bar \zeta)}
	\\ & \qquad \qquad\qquad \qquad\qquad
	\times J^x_\eps(x-y)J^x_\eps(x-\bar y)\, w_N(x)
	\, d\zeta\,dy\, d\bar \zeta\, d\bar y \, \dxi \,\dx
	\\ & \qquad \lesssim 
 	\frac{1}{n}\int
 	\left(\iint\abs{(\pxi \varrho)(\omega,t,y,\zeta)}J^x_\eps(x-y)
 	\, d\zeta\, dy\right)	
 	\\ & \qquad\qquad\quad 
	\times
	\left(\iint\abs{(\pxi \varrho)(\omega,t,\bar y,\bar \zeta)}
	J^x_\eps(x-\bar y)\, d\bar \zeta \, d\bar y\right)
	\, w_N(x) \dx \lesssim_N \frac{1}{n}
	\overset{n\uparrow \infty}{\longrightarrow} 0,
\end{align*}
where we have used $\pxi \varrho =-\nu$ with $\nu(\R)=1$. 
Therefore, sending $n\to \infty$, $\eps\to 0$, and then 
$\ell\to \infty$ in \eqref{eq:scl-rho-rho2-ell-eps-delta}, we 
obtain \eqref{eq:scl-contraction-rhomrho2}.
\end{remark}

\section{Comparison principle \& 
stochastic Kru{\v{z}}kov inequality}\label{S:5}

In a standard way, one can use Theorem \ref{thm:scl-L1stab} 
to deduce a comparison result. Indeed, 
\begin{equation}\label{L1-contraction-pos}
	\EE \int_{\R^d} \left(u_1(t)-u_2(t)\right)_+ w_N\,dx
	\lesssim \EE \int_{\R^d} 
	\left(u_{1,0}-u_{2,0}\right)_+\, w_N(x)\dx,
\end{equation} 
which follows from \eqref{eq:scl-L1-contraction-u1u2}
and the identity $2(a-b)_+=\abs{a-b} + (a-b)$ for all $a,b\in \R$. 
As a result, $u_{0,1}\le u_{0,2}$ implies $u_1\le u_2$.  

One can also establish \eqref{L1-contraction-pos} directly, 
following the proof of Proposition \ref{prop:scl-rigidity} 
step-by-step, modulo one change. 
The proof of Proposition \ref{prop:scl-rigidity} makes use 
of the It\^{o} chain rule to compute the equation for 
$\varrho-\varrho^2=\varrho(1-\varrho)$. 
To establish \eqref{L1-contraction-pos}, we use instead the 
It\^{o} product formula to deduce that (formally) the functions
$\rho_1=\En_{\xi<u_1}$ and $\rho_2=\En_{\xi<u_2}$ 
satisfy the inequality
\begin{equation}\label{eq:scl-varrho_1(1-varrho_2)}
	\begin{split}
		&\pt\Bigl( \varrho_1(1-\varrho_2)\Bigr) 
		+\Div_{(x,\xi)}\Bigl(\overline{a}\, 
		\varrho_1(1-\varrho_2)\Bigr)
		+R \partial_\xi \Bigl( \varrho_1(1-\varrho_2)\Bigr)
		\\ &\qquad +\sum_{k\ge 1}b_k 
		\partial_\xi \Bigl( \varrho_1(1-\varrho_2)\Bigr)\, \dotW_k(t)
		\\ & \qquad\quad 
		\le \pxi \left(\frac{B^2}{2}
		\pxi \Bigl( \varrho_1(1-\varrho_2)\Bigr) \right)
		+\partial_\xi\Bigl( (1-\varrho_2) m_1-\varrho_1 m_2\Bigr),
	\end{split}
\end{equation}
where $u_1,u_2$ are two kinetic solutions with 
corresponding kinetic measures $m_1$ and $m_2$. 
Of course, the rigorous proof goes through 
a regularization step that 
justifies the application of the It\^{o} product formula.

More generally, we can derive a 
stochastic Kru{\v {z}}kov inequality inequality, that 
may be considered as a comparison inequality 
which is satisfied a.s.. Particular cases of 
this inequality have been proven to be extremely 
useful in Sections \ref{S:2} and \ref{S:3}.

\begin{proposition}[stochastic 
Kru{\v {z}}kov inequality]\label{p:Kato-Kruzkov}
Let $u_1$ and $u_2$ be two kinetic solutions 
of \eqref{eq:scl} with initial data $u_{1,0}$ 
and $u_{2,0}$, respectively. Suppose 
$\Div_x A=0$. Then, almost surely,
\begin{multline}\label{eq:scl-kato} 
 \int_0^\infty\int_{\R^d}
\biggl\{ \abs{u_1-u_2}\phi_t
+\sgn(u_1-u_2)\left(A(t,x,u_1)-A(t,x,u_2)\right) 
\cdot \nabla_x \phi \biggr. \\
\biggl.  
+ \sgn(u_1-u_2)\left(R(t,x,u_1)-R(t,x,u_2)\right)
\phi\biggr\}\,dx\,dt \\
+\sum_{k\geq 1}\int_0^\infty \int_{\R^d} 
\sgn\left(u_1-u_2\right)
\left(b_k(t,x,u_1)-b_k(t,x,u_2)\right)\phi\,dx\,dW_k(t) 
\\ +
\int_{\R^d} \abs{u_{1,0}-u_{2,0}}\phi(0,x)\,dx\ge 0,
\end{multline}
for any $\varphi \in C_0^\infty(\R\times\R^d)$ 
with $\varphi \geq 0$.
\end{proposition}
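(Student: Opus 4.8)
The plan is to prove \eqref{eq:scl-kato} entirely at the kinetic level. The hypothesis $\Div_x A=0$ forces $d=-(\Div_x A)\equiv 0$ and hence also $\Div_x a=-\pxi d\equiv 0$, so that the kinetic equation \eqref{eq:scl-kinetic-eqn} degenerates to a \emph{divergence-free transport} equation in $(x,\xi)$ together with the reaction term $R\pxi\rho$, the noise, and the kinetic-measure term. Writing $\rho_i=\En_{\xi<u_i}$, $\nu_i=-\pxi\rho_i$ ($=\delta_{u_i}$) and letting $m_i$ be the corresponding kinetic measures ($i=1,2$), the core object is the pair of stochastic kinetic equations for the products $\rho_1(1-\rho_2)$ and $\rho_2(1-\rho_1)$. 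These are exactly of the form \eqref{eq:scl-varrho_1(1-varrho_2)}: applying the It\^o product formula to the (regularized) equations for $\rho_1$ and $\rho_2$, the second-order term $\pxi\!\left(\frac{B^2}{2}\pxi(\rho_1(1-\rho_2))\right)$ and the quadratic It\^o correction coming from $d[\rho_1,1-\rho_2]$ cancel \emph{exactly} — since $B^2=\sum_k b_k^2$ — in the same way the term $\mathcal Q$ cancels in the proof of Proposition~\ref{prop:scl-rigidity}, leaving $(1-\rho_2)\pxi m_1-\rho_1\pxi m_2$ on the right. As explained in the discussion preceding the statement (and the sketch of Lemma~\ref{l3.1}), this is made rigorous by convolution in $(x,\xi)$, with the errors in the transport and reaction terms controlled by the DiPerna--Lions commutator lemma \cite{DiPerna:1989aa} using \eqref{eq:scl-a-d-R-Sobolev}, and the noise-quadratic error handled by the Yamada--Watanabe-type choice of mollifier under \eqref{eq:scl-noise-bk-reg}. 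I will take \eqref{eq:scl-varrho_1(1-varrho_2)} as the starting point.

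\textbf{Main steps.} First I would add \eqref{eq:scl-varrho_1(1-varrho_2)} to its $1\leftrightarrow 2$ counterpart, producing a single inequality for $\rho_1(1-\rho_2)+\rho_2(1-\rho_1)=|\rho_1-\rho_2|$ (recall $\rho_i^2=\rho_i$), and test it against $\varphi(t,x,\xi):=\phi(t,x)\psi_\ell(\xi)$, where $0\le\phi\in C_c^\infty((-\infty,T)\times\R^d)$ and $\psi_\ell$ is a truncation sequence in $\xi$. Since $\varphi$ is asymptotically $\xi$-independent: the term $\pxi(\frac{B^2}{2}\pxi(\cdot))$ integrates to $0$; the $\xi$-divergence part of the transport term vanishes while, using $\Div_x a=0$, the $x$-part gives $\int_{\R^d}\!\int_\R a(t,x,\xi)\,|\rho_1-\rho_2|\cdot\nabla_x\phi\,\dxi\dx$; after integrating by parts in $\xi$ the reaction and noise terms become $-\int\!\int(\pxi R)\,|\rho_1-\rho_2|\,\phi$ and $-\sum_k\int\!\int(\pxi b_k)\,|\rho_1-\rho_2|\,\phi\,dW_k$ (the total $\pxi$-parts dropping against the $\xi$-independent part of $\varphi$); and the measure term, after integration by parts, contributes $-2\langle m_1,\nu_2\phi\rangle-2\langle m_2,\nu_1\phi\rangle+o(1)\le o(1)$, where the $o(1)$ collects the $\psi_\ell'$-errors, which vanish as $\ell\to\infty$ by the weighted $p$-moment bounds on $m_1,m_2$ from Definition~\ref{def:kinetic-measure} and on the Young measures $\nu_i$ (the weight $w_N$ of \eqref{eq:scl-weight-def} being harmless on the compact $x$-support of $\phi$). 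Next I would invoke the representation $h(u)=h(0)+\int_\R h'(\xi)\chi(\xi,u)\dxi$ with $h=A_i,R,b_k$, together with $\chi(\xi,u_1)-\chi(\xi,u_2)=\rho_1-\rho_2$ and the pointwise-in-$\xi$ identity $\sgn(u_1-u_2)(\rho_1-\rho_2)=|\rho_1-\rho_2|$, to rewrite $\int_\R a_i|\rho_1-\rho_2|\dxi=\sgn(u_1-u_2)(A_i(u_1)-A_i(u_2))$, $\int_\R(\pxi R)|\rho_1-\rho_2|\dxi=\sgn(u_1-u_2)(R(u_1)-R(u_2))$, and likewise for each $b_k$; together with $\int_\R|\rho_1-\rho_2|\dxi=|u_1-u_2|$ and $\int_\R|\rho_{1,0}-\rho_{2,0}|\dxi=|u_{1,0}-u_{2,0}|$ (property (3) of Remark~\ref{eq:scl-chip-prop}). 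Sending $\ell\to\infty$ and multiplying the resulting distributional inequality by $-1$ yields precisely \eqref{eq:scl-kato}.

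\textbf{Technical points and main obstacle.} One must keep in mind that \eqref{eq:scl-kato} is a \emph{pathwise} statement — the stochastic integral sits inside it — so no expectation may be taken; one works with fixed c\`agl\`ad/c\`adl\`ag representatives of the relevant processes, as in \cite{DV,Dotti:2018aa}, and uses separability of $C_c^\infty((-\infty,T)\times\R^d)$ to pass from ``for each fixed $\phi$, a.s.'' to ``a.s., for all $\phi$'', controlling the $\psi_\ell$-truncation and $o(1)$ terms above either uniformly in $L^1(\Omega)$ and then a.s.\ along a subsequence, or directly pathwise via the a.s.\ finiteness of the moments. The main obstacle is the already-referenced regularization step: rigorously justifying the It\^o product formula for $\rho_1(1-\rho_2)$, the exact cancellation of the quadratic correction against the parabolic term, control of the DiPerna--Lions commutators, and in particular assigning precise meaning to $(1-\rho_2)m_1$ and to $b_k\pxi\rho_i$ following the distributional conventions of Section~\ref{S:4}. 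Once \eqref{eq:scl-varrho_1(1-varrho_2)} is available, the remaining steps — symmetrization, testing against $\phi\psi_\ell$, the $\xi$-representation formulas, the $\ell\to\infty$ limit, and discarding the nonnegative measure terms by their sign — are routine, and in fact mirror step by step the proof of Proposition~\ref{prop:scl-rigidity} with the It\^o chain rule replaced by the It\^o product rule.
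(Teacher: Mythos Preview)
Your approach is correct but follows a genuinely different route from the paper's own proof. The paper notes explicitly, just before the proof, that \eqref{eq:scl-kato} ``formally results by integrating inequality \eqref{eq:scl-varrho_1(1-varrho_2)}'', and then deliberately \emph{avoids} that route in favor of a vanishing viscosity argument: it takes the parabolic approximations $u_j^\ve$ (from the existence proof of Theorem~\ref{thm:scl-L1stab}), writes the equation for $u_1^\ve-u_2^\ve$, applies It\^o's formula with a $C^2$ approximation $S_\theta$ of $|\cdot|$, sends $\theta\to0$ to get a pathwise inequality at the $\ve$-level, and then passes $\ve_n\to0$ a.s.\ along a subsequence using the $L^p$ convergence of $u_j^{\ve}$ to $u_j$ together with \eqref{eq:4.44}. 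No kinetic machinery is used at all.

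Your route, by contrast, is entirely kinetic: you take \eqref{eq:scl-varrho_1(1-varrho_2)} as established (via the regularization of Proposition~\ref{prop:scl-rigidity}, replacing the It\^o chain rule by the product rule), symmetrize in $(1,2)$, test against $\phi(t,x)\psi_\ell(\xi)$, discard the kinetic-measure terms $-2\langle m_1,\nu_2\phi\rangle-2\langle m_2,\nu_1\phi\rangle$ by sign, and invert back to $u$-variables via the $\chi$-representation. The identification of the $\xi$-integrals with $\sgn(u_1-u_2)(A(u_1)-A(u_2))$, etc., and the treatment of the truncation errors via the moment bounds, are all correct. The main trade-off is this: the paper's proof is shorter and elementary but relies on the \emph{specific construction} of solutions by viscosity limits, whereas your argument is intrinsic to the kinetic formulation (and would apply to any kinetic solution, not just one obtained as a viscosity limit) at the cost of invoking the full regularization/commutator machinery of Section~\ref{S:4} to make \eqref{eq:scl-varrho_1(1-varrho_2)} rigorous --- a step the paper states only formally and chose to sidestep here.
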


Note that, formally, this inequality results by integrating inequality \eqref{eq:scl-varrho_1(1-varrho_2)}. Below, we present a straightforward proof using the fact that the unique solutions are obtained through the vanishing viscosity method.

\begin{proof}
Following the proof of Theorem \ref{thm:scl-L1stab} 
we have that $u_j$, $j=1,2$, may be found as a 
limit in $L^p(\Omega\times[0,T]\times\mathbb{R}^d)$ 
when $\ve\to 0$ of a sequence $\{u_j^\ve\}_{\ve>0}$ of 
weak solutions to the parabolic SPDEs 
\begin{equation*}
	\begin{split}
		& \pt u_j^\ve + \Div_x A(t,x,u_j^\ve) 
		- \ve \Delta_x u_j^\ve
		=B(t,u_j^\ve)\dotW(t) + R(t,x,u_j^\ve),
		\hspace{2mm} (t,x)\in (0,T)\times \R^d,\\
		& u_j^\ve(0,x)=u_{0,j}(\omega,x), 
		\hspace{5mm} x\in \R^d.
	\end{split}	
\end{equation*}
For fixed $\ve>0$, we have that $(u_1-u_2)$ is a 
weak solution of the following equation
\begin{equation*}
	\begin{split}
		& \pt (u_1-u_2)^\ve 
		+ \Div_x \left(A(t,x,u_1^\ve)-A(t,x,u_2^\ve)\right) 
		- \ve \Delta_x (u_1^\ve-u_2^\ve)\\
		&\qquad 
		= \left(B(t,u_1^\ve)-B(t,u_2^\ve)\right)\dotW(t) 
		+ R(t,x,u_1^\ve)-R(t,x,u_2^\ve),
		\hspace{2mm} (t,x)\in (0,T)\times \R^d,\\
		& (u_1^\ve-u_2^\ve)(0,x)=(u_{0,1}-u_{0,2})(\omega,x), 
		\hspace{5mm} x\in \R^d.
	\end{split}	
\end{equation*}

Let $S_\theta(\xi)$ be a $C^2$ convex 
approximation of $|\xi|$, such that $S_\theta'(\xi)$ is 
monotone nondecreasing, $S_\theta'(\xi)=1$, for $\xi>\d$, 
and $S_\theta'(\xi)=-1$, for $\xi\le -\d$. 
Then, for any nonnegative test function $\varphi(t,x)$, after sending 
$\theta \to 0$, by It\^{o} formula we have a.s. that
\begin{multline}\label{|u_2-u_2|^ve-ito}
\int_0^\infty\int_{\mathbb{R}^d} \abs{u_1^\ve-u_2^\ve}
\varphi_t \, dx\, dt 
+ \int_0^\infty\int_{\mathbb{R}^d}\sgn(u_1^\ve-u_2^\ve)
\left(A(t,x,u_1^\ve)-A(t,x,u_2^\ve)\right)
\cdot\nabla_x\varphi \,dx\, dt\\
\qquad -\ve\int_0^\infty \int_{\mathbb{R}^d}
\sgn(u_1^\ve-u_2^\ve)\nabla(u_1^\ve-u_2^\ve)
\cdot\nabla\varphi dx\, dt  \\
\qquad\qquad + \int_0^\infty\int_{\mathbb{R}^d}
\sgn(u_1^\ve-u_2^\ve)
\left(R(t,x,u_1^\ve)-R(t,x,u_2^\ve)\right)
\varphi \, dx\, dt\\
\qquad\qquad\qquad 
+\sum_{k\geq 1}\int_0^\infty\int_{\mathbb{R}^d}
\sgn(u_1^\ve-u_2^\ve)
\left(b_k(t,x,u_1^\ve)-b_k(t,x,u_2^\ve)\right)
\varphi \, dx \, dW_k(t) \\
\qquad\qquad\qquad\qquad 
+ \int_{\mathbb{R}^d} \abs{u_{0,1}-u_{0,2}}
\varphi(0,x)\, dx  \geq 0, 
\end{multline}
where the convergence in  the stochastic integral is enabled by \eqref{eq:scl-noise-bk-reg}.

Recall that both $u_1, u_2$ satisfy estimate \eqref{eq:4.44}, uniformly in $\ve$. 
Thus, as convergence in mean square implies convergence in probability, which, in turn, implies 
a.s.~convergence along a subsequence, we know that the 
third term on the left-hand side of \eqref{|u_2-u_2|^ve-ito} 
converges to zero a.s. along a subsequence $\ve_n\to 0$. 
By the same token, passing to a further subsequence as the 
case may be, taking the limit as $\ve_n\to 0$ in \eqref{|u_2-u_2|^ve-ito}, 
we obtain \eqref{eq:scl-kato}.
\end{proof}

\end{document}